\documentclass[11pt,twoside]{article}
\usepackage{amsmath,amssymb,amsthm}
\usepackage{cite}
\usepackage{graphicx}
\usepackage{subfigure}
\usepackage{color}

\textheight 22cm
\textwidth 14.7cm
\topmargin -4mm
\oddsidemargin 5mm
\evensidemargin 5mm

\newtheorem{thm}{Theorem}[section]

 \newtheorem{cor}{Corollary}[section]
 \newtheorem{lem}{Lemma}[section]
 \newtheorem{prop}{Proposition}[section]
 \newtheorem{defn}{Definition}[section]
\newtheorem{rem}{Remark}[section]

\def\tilde{\widetilde}
\def\hat{\widehat}

\newcommand\R{\mathbb{R}}

\newcommand\Z{\mathbb{Z}}

\def\cS{{\mathcal S}}

\begin{document}
\title{Global dynamics of Kato's solutions for the 3D incompressible micropolar system}

\author{Zihao Song}
\date{\it \small Research Institute for Mathematical Sciences, Kyoto University\\
Kyoto, 606-8501, Japan\\
 E-mail: szh1995@nuaa.edu.cn}
\maketitle

{\bf Keywords:} Incompressible micropolar system; Global well-posedness; Large time behaviors; critical Besov space\\
{\bf Mathematical Subject Classification 2020}: 76D03, 35Q30, 35D35

\begin{abstract}
We consider the global well-posedness and decay rates for solutions of 3D incompressible micropolar equation in the critical Besov space. Spectrum analysis allows us to find not only parabolic behaviors of solutions, but also damping effect of angular velocity in the low frequencies. Based on this observation, we establish the global well-posedness with more general regularity on the initial data.

The approach concerns large time behaviors is  so-called the Gevrey method which bases on the parabolic mechanics of the micropolar system. This method enables us even to derive decay of \textit{arbitrary} higher order derivatives by transforming it into the control of the radius of analyticity under a particular regularity.
 To bound the growth of the radius of analyticity in general $L^p$ Besov spaces, we shall develop some new techniques concern Gevrey estimates, especially an extended Coifman-Meyer theory, to cover those endpoint Lebesgue framework.
\end{abstract}

\section{Introduction}\setcounter{equation}{0}
In this paper, we consider the following 3-D incompressible micropolar fluid system in $\mathbb{R}^{+}\times\mathbb{R}^{3}:$
\begin{equation}
\left\{
\begin{array}{l}\partial_{t}u-(\chi+\nu)\Delta u+u\cdot\nabla u+\nabla\pi-2\chi\nabla\times\omega=0,\\ [1mm]
 \partial_{t}\omega-\mu\Delta\omega+u\cdot\nabla \omega+4\chi\omega-\kappa\nabla \mathrm{div}\omega-2\chi\nabla\times u =0,\\[1mm]
\mathrm{div}u=0,\\[1mm]
(u,\omega)|_{t=0}=(u_{0},\omega_{0}),
 \end{array} \right.\label{1.1}
\end{equation}
where $u = u(x,t)$ denotes the fluid velocity, $\omega (x,t)$ denotes the velocity field of rotation for the particle of the fluid while $\pi (x,t)$ represents the scalar pressure. $\nu$ denotes the Newtonian kinematic viscosity, $\chi$ denotes the micro-rotation viscosity, and $\kappa,\mu$ are the angular viscosities. In our paper, we assume all those viscosities to be positive.

The system \eqref{1.1} was first introduced by C.A. Eringen to model the micropolar fluids (see \cite{BCD}). The micropolar fluids are fluids with microstructure, which can be viewed as non-Newtonian fluids exhibiting micro-rotational effects and micro-rotational inertia which are significant generalizations of the Navier-Stokes equations covering many more phenomena such as the fluids consisting of the particles suspended in a viscous medium.

Due to its importance in both physics and mathematics, lots of works have been done in the mathematical analysis.
If $\omega=0$ and the micro-rotation viscosity $\chi=0$, the system \eqref{1.1} reduces to the 3-D viscous incompressible Navier-Stokes equations:
 \begin{equation}
\left\{
\begin{array}{l}\partial_{t}u-\nu\Delta u+u\cdot\nabla u+\nabla\pi=0,\\ [1mm]
\mathrm{div}u=0,\\ [1mm]
u|_{t=0}=u_{0}(x).
 \end{array} \right.\label{INS}
\end{equation}
In the fundierend works by Leray \cite{L} and Hopf \cite{H}, authors proved positive results concerned the global-in-time existence of weak solutions (Leray-Hopf solutions) for the incompressible Navier-Stokes equation. However, the uniqueness of Leray-Hopf solutions remains a big open problem. In terms of uniqueness, an important issue is that the Navier-Stokes system owns the following scaling invariance:
\begin{eqnarray}\label{1.2}
(u_{\lambda}(t,x),p_{\lambda}(t,x))=(\lambda u(\lambda^2t,\lambda x),\lambda^2p(\lambda^2t,\lambda x)),\,\,\,\,\,\lambda>0\label{EE}
\end{eqnarray}
let us underline that the critical spaces are the ones that their norms are invariant under the scaling of (\ref{1.2}). Fujita and Kato \cite{FK} proved the global well-posedness results in the critical homogeneous Sobolev space $\dot H^\frac{1}{2}$. Cannone \cite{Ca}, Planchon\cite{P} developed Fujita-Kato's theory in the critical Besov space $\dot B^{\frac{3}{p}-1}_{p,\infty}(\mathbb{R}^3)$, and the existence and uniqueness in $\dot B^{\frac{3}{p}-1}_{p,q}$ with $1\leq p<\infty, 1\leq q\leq\infty$ were established by Chemin in \cite{C}. Bae, Biswas and Tadmor \cite{BBT} proved Gevrey analyticity of solutions in the critical space. Let us emphasize that such critical Besov spaces with negative index allow us to construct solutions for the highly oscillating initial data such like
$$u_{0}(x)=\mathrm{sin}(\frac{x_{3}}{\varepsilon})(-\partial_{2}\Phi(x),\partial_{1}\Phi(x),0),$$
where $\Phi\in\mathcal S(\mathbb{R}^3)$ and $\varepsilon>0$.  As for searching for the largest critical space, one could refer to Koch and Tataru \cite{KT}, Bourgain and Pavlovic \cite{BP}, Wang \cite{Wang} and so on. One may check Lemari$\acute{e}$-Rieusset \cite{L-cras} for complete references in this direction.

For the large time behaviors of solutions for (\ref{INS}), in  \cite{S-ARMA,S-CPDE,S-JAMS}, Schonbek constructed  a complete theory, that is the Fourier splitting method, to establish the asymptotic behaviors for heat equations and weak solutions of Navier-Stokes equations. Precisely, in Schonbek \cite{S-CPDE}, $L^p(1\leq p< 2)$ additional assumption for initial data was introduced. With help of the Fourier splitting method, their results showed once $u_{0}(x)\in L^p\cap L^2(\mathbb{R}^3)$, weak solutions of (\ref{1.1}) fulfill
\begin{eqnarray}\label{heat}\|u(t)\|_{L^2}\lesssim (1+t)^{-\frac{3}{4}(\frac{2}{p}-1)}.\end{eqnarray}
Later, Wiegner \cite{W} furnished a more precise uniform decay rate by carefully analyzing the relationship between
the linear and nonlinear parts of the equation. In Bjorland, Schonbek \cite{BS}, Niche, Schonbek\cite{NS}, the idea of ``decay characterization", which shares a more subtle necessary and sufficient condition for decay rates, was introduced and generalized. Recently, Lorenzo\cite{L} gave a delicate description of  ``decay characterization" in Besov space $\dot{B}^{-2\sigma}_{2,\infty}$, which reflects the characterization of Besov spaces in terms of heat kernel. As for high order derivative estimates, Oliver, Titi\cite{OT} utilized Gevrey analyticity established by Foias, Tenam \cite{FT1,FT2} to present optimal decay rates of high order derivatives for weak solutions in $L^2$.

As for the micropolar system, Galdi, Rionero \cite{GR} and Lukaszewicz \cite{X} proved the existence of the weak solutions. The existence and uniqueness of the strong solutions to the micropolar flows for either local with a large data or global with a small data were considered in \cite{GR} and the references therein. On the well-posedness for the 2D case , one may refer to \cite{TWW,LOA,rotk} respectively; On the blow-up criterion for the smooth solution and the regularity criterion for the weak solution, one refers to \cite{LL,DF} and the references therein. One can also refer to \cite{LZ,catt} for the recent decay results to compressible or incompressible micropolar system.

The major difficulty for (\ref{1.1}) comes from the coupling brought by terms $\nabla\times\omega$ and $\nabla\times u$, which prevents us from the normal energy method together with Fourier localization methods or Kato's semigroup method in \cite{Ca}. To overcome it, Fereirra and Roa in \cite{C} studied the corresponding linearized system:
\begin{equation*}
\left\{
\begin{array}{l}\partial_{t}u-\Delta u-\nabla\times\omega=0,\\ [1mm]
 \partial_{t}\omega-\Delta\omega+4\omega-\nabla \mathrm{div}\omega-\nabla\times u =0,\\[1mm]
 \end{array} \right.
\end{equation*}
where the Green matrix donated by $\mathcal{G}(x,t)$  fulfills the similar pointwise estimate as the heat kernel.
Based on the pointwise estimate, authors got the global existence results in the tempered distributions space $PM^a$-space. When it comes to the critical Besov space in $L^p$ framework,  Chen and Miao \cite{D1} made the best use of divergence free condition and antisymmetric structure of the matric related to the term $\nabla\times u$ and $\nabla\times\omega$. By using the Laplace transform, they got the more explicit expression of $\widehat{\mathcal{G}}(\xi,t)$and corresponding Green matrix estimates where for any couple $(t,\lambda)$ of positive real numbers and $\mathrm{supp}\widehat f\subset \lambda\mathcal{C}$,
\begin{equation*}
\begin{array}{l}\|\mathcal{G}f(x,t)\|_{L^p}\leq Ce^{-c{\lambda}^2t}\|f\|_{L^p} ,  1\leq p\leq\infty.\\ [1mm]
 \end{array}
\end{equation*}
Above estimates for the Green matrix lead to the global well-posedness in the critical Besov space $B^{\frac{3}{p}-1}_{p,\infty}$ with $1\leq p<6$. In \cite{S}, the author extended results of \cite{D1} to
$p\in[1,\infty)$ and establish the Gevrey analyticity in the critical space.

In the present paper, we try to take one step further on well-posedness in \cite{D1,S} first. We shall prove the global existence and uniqueness for Kato's solutions of (\ref{1.1}) with more general initial data concerns angular velocity $\omega$ by utilizing its damping effect and nonlinear structures. Furthermore, we would develop a new Gevrey method, which extends Schonbek's theories to establish decay rates for the solution of any order derivatives but without asking any smallness condition on the initial assumption.

\subsection{Main results}

Before we give our main results, let us first introduce our main functional space. For any $p,q\in[1,\infty]$, the space $E^p_{T}$ is given by
\begin{eqnarray*}
\|(u,\omega)\|_{E^p_{T}}&=&\|u\|^{h}_{{\tilde L}^{\infty}_{T}({{\dot B}^{\frac{3}{p}-1}}_{p,q})\cap{\tilde L}^{1}_{T}({{\dot B}^{\frac{3}{p}+1}}_{p,q})}
+\|\omega\|^{h}_{{\tilde L}^{\infty}_{T}({{\dot B}^{\frac{3}{p}-1}}_{p,q})\cap{\tilde L}^{1}_{T}({{\dot B}^{\frac{3}{p}+1}}_{p,q})}\\
&+&\|u\|^{\ell}_{{\tilde L}^{\infty}_{T}({{\dot B}^{\frac{3}{p}-1}}_{p,q})\cap{\tilde L}^{1}_{T}({{\dot B}^{\frac{3}{p}+1}}_{p,q})}
+\|\omega\|^{\ell}_{{\tilde L}^{\infty}_{T}({{\dot B}^{\frac{3}{p}}}_{p,q})\cap{\tilde L}^{1}_{T}({{\dot B}^{\frac{3}{p}}}_{p,q})}
\end{eqnarray*}
for $T>0$. We also agree with $E^p$ if $T=\infty$.

Now we state our first main result concerns global well-posedness in the critical Besov space:

\begin{thm}\label{thm1}
Let $q\in[1,\infty]$, $p\in[1,6)$. There exist a positive constant $\varepsilon$ such that for $u_{0}\in{{\dot B}^{\frac{3}{p}-1}}_{p,q}$, $\omega_{0}^{h}\in{{\dot B}^{\frac{3}{p}-1}}_{p,q}$ and $\omega_{0}^{\ell}\in{{\dot B}^{\frac{3}{p}}}_{p,q}$ if
\begin{equation*}
\begin{array}{l}
\mathcal{X}_{0,p}\triangleq\|(u_{0},\omega_{0})\|^{h}_{\dot B^{\frac{3}{p}-1}_{p,q}}+\|(u_{0},\Lambda\omega_{0})\|^{\ell}_{\dot B^{\frac{3}{p}-1}_{p,q}}\leq\varepsilon,\\ [1mm]
 \end{array}
\end{equation*}
then the system \eqref{1.1} admits a unique global solution $(u,\omega)\in E^{p^{}}$ satisfying for ant $T>0$
\begin{equation*}
\begin{array}{l}\|(u,\omega)\|_{E^p_{T}}\lesssim \mathcal{X}_{0,p}.\\ [1mm]
 \end{array}
\end{equation*}
\end{thm}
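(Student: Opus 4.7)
The plan is to base the proof on a frequency-by-frequency analysis of the linear part of \eqref{1.1} and then to close a contraction argument in $E^{p}_{T}$. The first step is to compute the symbol of the linearisation about zero: after splitting $\omega=\omega_{\perp}+\nabla\varphi$, the gradient component $\nabla\varphi$ solves a scalar parabolic equation with damping $4\chi$, while the coupled $2\times 2$ symbol governing $(u,\omega_{\perp})$ restricted to the plane $\{\xi\}^{\perp}$ has eigenvalues that behave, as $|\xi|\to 0$, like $\lambda_{+}\sim -c|\xi|^{2}$ (parabolic) and $\lambda_{-}\sim -4\chi+O(|\xi|^{2})$ (damped), and like $-c|\xi|^{2}$ at high frequencies. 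This is precisely the regime encoded by $E^{p}_{T}$: parabolic gain of two derivatives for $u$ in every band and for the high-frequency part of $\omega$, but only $L^{1}_{T}$-integrable damping at the unchanged regularity $\dot B^{3/p}_{p,q}$ for $\omega^{\ell}$, which accounts for the one-derivative shift imposed on $\omega_{0}^{\ell}$ inside $\mathcal{X}_{0,p}$.

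The second step is to convert this spectral picture into dyadic semigroup bounds, refining the Green-matrix estimates of \cite{D1,S} so that on the low-frequency damped block the $\omega$-component is propagated with the damping factor $e^{-4\chi t}$ in place of the parabolic one. Summing these bounds with the Besov weights of index $q$ and applying Duhamel produces a master inequality of the form
\begin{equation*}
\|(u,\omega)\|_{E^{p}_{T}}\lesssim \mathcal{X}_{0,p}+\|u\otimes u\|_{F^{p}_{T}}+\|u\otimes \omega\|_{F^{p}_{T}},
\end{equation*}
where $F^{p}_{T}$ is the source space matched to $E^{p}_{T}$ (parabolic $\tilde L^{1}_{T}\dot B^{3/p-1}_{p,q}$ on each parabolic block and $\tilde L^{1}_{T}\dot B^{3/p}_{p,q}$ on the damped block). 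The linear couplings $\nabla\times u,\nabla\times\omega$ are absorbed by the semigroup itself and produce no nonlinear term. The remaining quadratic terms are treated by Bony's paraproduct decomposition: $u\otimes u$ via the classical Fujita--Kato bilinear estimate, and $u\otimes\omega$ by a case analysis on the low/high position of each factor, exploiting the extra derivative gain $u\in\tilde L^{1}_{T}\dot B^{3/p+1}_{p,q}$ to compensate the missing parabolic smoothing of $\omega^{\ell}$.

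Putting the two steps together yields a quadratic inequality $\|(u,\omega)\|_{E^{p}_{T}}\le C\mathcal{X}_{0,p}+C\|(u,\omega)\|_{E^{p}_{T}}^{2}$, from which a standard Picard iteration produces a global solution as soon as $\mathcal{X}_{0,p}\le\varepsilon$ is small enough; uniqueness and the announced bound $\|(u,\omega)\|_{E^{p}_{T}}\lesssim \mathcal{X}_{0,p}$ follow by applying the same bilinear estimates to the difference of two solutions. The main obstacle I anticipate is the low-frequency estimate of $u\cdot\nabla\omega$ in the damped block: because $\omega^{\ell}$ enjoys no parabolic gain, every paraproduct landing at low frequency must be absorbed using the pairing $\|u\|_{\tilde L^{1}_{T}\dot B^{3/p+1}_{p,q}}\|\omega\|_{\tilde L^{\infty}_{T}\dot B^{3/p}_{p,q}}$ (or its symmetric counterpart), with essentially no freedom to shift derivatives between the two factors. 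This sharpness, together with the Besov product and Bernstein conditions that appear when summing the dyadic pieces, is what I expect to enforce the upper bound $p<6$ on the admissible Lebesgue exponent.
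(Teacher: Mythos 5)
Your proposal is correct and would likely close, but it follows a genuinely different route from the paper at the linear level. Both of you start from the same spectral picture (after splitting $\omega$ into its gradient and divergence-free parts, the low-frequency block pairs parabolic decay for $u$ with $O(1)$ damping for $\omega$, and the whole symbol is parabolic at high frequency), and both of you close via Bony decomposition and a Picard/continuity argument. The difference is how the spectral picture is turned into norm estimates. You propose to \emph{diagonalize} the $2\times 2$ symbol for $(u,\omega_\perp)$ and refine the Green-matrix bounds of \cite{D1,S} so that the $\omega$-component carries the damping factor $e^{-4\chi t}$; the linear couplings $\nabla\times u$, $\nabla\times\omega$ are then absorbed into the propagator. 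The paper deliberately avoids this: on the high-frequency block it keeps the heat/Lam\'e semigroups for $u$ and $\omega$ separately, treats $\nabla\times u$ and $\nabla\times\omega$ as \emph{lower-order source terms}, and absorbs them by choosing the frequency threshold $j_{0}$ large enough; on the low-frequency block it introduces the Haspot-style effective velocity $R=\nabla\times\mathcal{P}\omega+\tfrac{1}{2}\Delta u$, which satisfies a scalar damped equation on its own and decouples the system without ever diagonalizing the matrix. What the paper's route buys is that it never has to track the $\xi$-dependent change of basis in $L^p$ for $p\neq 2$, which is the technical cost of a genuine Green-matrix refinement at low frequency (the estimate quoted in the paper from \cite{D1}, $\|\mathcal{G}f\|_{L^p}\lesssim e^{-c\lambda^2 t}\|f\|_{L^p}$, is purely parabolic and does \emph{not} exhibit the damping you need, so your ``refinement'' step is a real piece of work, not a citation). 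What your route buys, if carried out, is a more unified statement — a single semigroup estimate rather than two separate mechanisms — at the price of having to control the $L^p$-boundedness of the projection onto each spectral branch uniformly in $\xi$. Your discussion of the nonlinear terms, including the identification that $u\cdot\nabla\omega$ landing in the damped low-frequency block is the tight pairing and that it must be paid with $\|u\|_{\tilde L^1_T\dot B^{3/p+1}_{p,q}}\|\omega\|_{\tilde L^\infty_T\dot B^{3/p}_{p,q}}$, matches the paper's estimates exactly and correctly locates where the restriction $p<6$ enters.
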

\begin{rem}
Compared to results in \cite{D1,S} where $(u_{0},\omega_{0})\in\dot B^{\frac{3}{p}-1}_{p,q}$, we  relax the restriction of initial data for angular velocity $\omega_{0}$ in low frequencies. The relaxation of regularity for angular velocity dues to finding the damping effect of $\omega$ and the fact the nonlinearities is actually ``linear" in terms of $\omega$.

Careful estimates would be paid while considering nonlinearities, especially those from equations of angular velocity. We state that nonlinear structure and divergence free condition are fully used to cover uniform estimates.
\end{rem}

The second main result is devoted to establishing the large time behaviors of the solution constructed in Theorem \ref{thm1}.

\begin{thm}\label{thm2}
Assume $p\in[1,\infty]$  satisfies the condition in Theorem \ref{thm1} and $(u,\omega)$ be the corresponding global solution constructed in Theorem \ref{thm1}.
We clarify that if  $\sigma$ satisfies $1-\frac{3}{p}<\sigma<\min\{1+\frac{3}{p},1+\frac{3}{p'}\}$ while initial data satisfies
\begin{eqnarray}\label{spaceX}
\mathcal{D}_{0,p}\triangleq\|(u_{0},\Lambda\omega_{0})\|^{\ell}_{\dot B^{-\sigma}_{p,\infty}}<\infty,
\end{eqnarray}
then it holds for all $r\geq p $, $t>0$ such that
\begin{eqnarray}\label{bound decay}\|\Lambda^{l}u\|_{L^{r}}\leq C_{l}  t^{-\frac{\tilde{\sigma}}{2}-\frac{l}{2}}, \quad l>-\tilde{\sigma};\end{eqnarray}
\begin{eqnarray}\label{bound decay 2}\|\Lambda^{l}\omega\|_{L^{r}}\leq C_{l}  t^{-\frac{\tilde{\sigma}}{2}+\frac{1}{2}-\frac{l}{2}} , \quad  l>-\tilde{\sigma}+1\end{eqnarray}
where $C$ is a positive constant depending on $l$ and $\tilde{\sigma}\triangleq\sigma-\frac{3}{r}+\frac{3}{p}$.
\end{thm}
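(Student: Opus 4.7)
The plan is to implement the Gevrey strategy announced in the abstract, which reduces decay of arbitrarily high derivatives to a uniform bound on the analyticity radius of $(u,\omega)$. Three steps are natural: (i) propagate the low-frequency assumption $(u_0,\Lambda\omega_0)\in\dot B^{-\sigma}_{p,\infty}$ in time; (ii) convert this into a base $L^r$-decay rate; (iii) upgrade to a Gevrey estimate and optimise dyadically to recover all higher-derivative rates.

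For (i), I would run Duhamel on \eqref{1.1}. The spectrum analysis described in the abstract gives, at the linear level, parabolic behaviour of $u$ and damping-plus-parabolic behaviour of $\omega$ in low frequencies, so that the rescaled pair $(u,\Lambda\omega)$ is propagated linearly in $\dot B^{-\sigma}_{p,\infty}$. The nonlinear terms $u\cdot\nabla u$, $u\cdot\nabla\omega$ and the couplings $\nabla\times\omega$, $\nabla\times u$ are then handled by paraproduct/remainder laws in Besov spaces; the range $1-3/p<\sigma<\min\{1+3/p,1+3/p'\}$ is exactly the window in which the relevant products close at negative regularity. The smallness $\mathcal X_{0,p}\leq\varepsilon$ from Theorem \ref{thm1} absorbs them and yields
$$
\sup_{t\geq 0}\|(u,\Lambda\omega)(t)\|^{\ell}_{\dot B^{-\sigma}_{p,\infty}}\lesssim \mathcal D_{0,p}.
$$
For (ii), I interpolate this bound with the global $E^p$-estimate from Theorem \ref{thm1} and run a Schonbek-style Fourier-splitting inequality at the level of dyadic low-frequency Besov norms. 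Composing with the embedding $\dot B^{-\sigma}_{p,\infty}\hookrightarrow \dot B^{-\tilde\sigma}_{r,\infty}$ and a Bernstein inequality gives $\|u(t)\|_{L^r}\lesssim t^{-\tilde\sigma/2}$ and, thanks to the extra $\Lambda$ sitting in the initial assumption on $\omega_0$, $\|\omega(t)\|_{L^r}\lesssim t^{-\tilde\sigma/2+1/2}$. These are the $l=0$ cases of \eqref{bound decay}-\eqref{bound decay 2}.

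For (iii), I apply the Gevrey multiplier $e^{\sqrt{t}\,\Lambda_1}$ (with $\Lambda_1=\sum_j|D_j|$ in the spirit of \cite{BBT}) to \eqref{1.1} and propagate it uniformly in the norm $E^p$ of Theorem \ref{thm1}. The linear contributions commute harmlessly with $e^{\sqrt{t}\Lambda_1}$ (the damping of $\omega$ is untouched); the nonlinear contributions are the crux and require the extended Coifman-Meyer theory announced in the abstract, so as to bound the bilinear Gevrey symbol $e^{\sqrt{t}\Lambda_1}(fg)-\sum_{j}e^{\sqrt{t}\Lambda_1}(\dot\Delta_j f\cdot S_{j-1}g)-\ldots$ in $L^p$-Besov scales rather than in $L^2$. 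Once this Gevrey bound is in place, the frequency-localised Bernstein estimate
$$
\|\Lambda^l\dot\Delta_j f\|_{L^r}\lesssim 2^{jl}\,e^{-c\sqrt{t}\,2^j}\,\|e^{\sqrt{t}\Lambda_1}\dot\Delta_j f\|_{L^r}
$$
combined with the base rate from (ii) and optimisation in the dyadic index $j$ delivers \eqref{bound decay}-\eqref{bound decay 2} for all $l$ above the stated thresholds; the conditions $l>-\tilde\sigma$ and $l>-\tilde\sigma+1$ are exactly the cut-offs imposed by the low-frequency decay rate.

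The main obstacle will be the $L^p$ Gevrey multiplier estimates in step (iii). The standard Coifman-Meyer machinery is built around $L^2$ Plancherel, whereas here one must check integrability of the relevant kernels after dyadic decomposition for \emph{every} $p\in[1,6)$, including the endpoint $p=1$ used in Theorem \ref{thm1}. Controlling $e^{\sqrt{t}\Lambda_1}$ acting on a product when the factors live in different $L^p$-Besov spaces, and keeping track of the damping term $4\chi\omega$ through this multiplier without degrading the low-frequency decay, is the place where the paper's new technical input is genuinely needed.
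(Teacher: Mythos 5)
Your high-level strategy (Gevrey radius controls arbitrary-order decay) is the right one and matches the paper's intent, but the specific architecture you outline has a genuine gap. The paper does \emph{not} prove decay by first establishing a base $L^r$ rate via Fourier splitting and then interpolating against a critical Gevrey bound. Instead, after establishing Gevrey analyticity in the critical $E^p$ norm (Lemma \ref{lem4.0}), the paper propagates the \emph{Gevrey} version of the negative-regularity bound: $\sup_t\|e^{\sqrt{t}\Lambda_1}(u,\Lambda\omega)\|^\ell_{\dot B^{-\sigma}_{p,\infty}}\lesssim 1$ (Lemma \ref{tol}). This is the crucial missing piece in your plan. With it, Lemma \ref{derivatives} --- an $L^p$-Besov dyadic substitute for \eqref{R-E4}, namely $\|\Lambda^m f\|^\ell_{\dot B^s_{p,1}}\le C_m\,t^{-m/2}\|e^{\sqrt{t}\Lambda_1}f\|^\ell_{\dot B^s_{p,\infty}}$ --- applied with $s=-\sigma$, $m=l+\tilde\sigma$, together with the embedding $\dot B^{-\sigma}_{p,1}\hookrightarrow\dot B^{-\tilde\sigma}_{r,1}$, gives \eqref{bound decay} in one step, with the high-frequency tail handled by \eqref{ahha2}.

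By contrast, your steps (i) and (iii) only give a \emph{plain} uniform bound $\|u\|^\ell_{\dot B^{-\sigma}_{p,\infty}}\lesssim 1$ and a Gevrey bound at the \emph{critical} regularity $3/p-1$. These two pieces do not interlock: a single-block argument yields $\|\dot\Delta_j u\|_{L^p}\lesssim\min\{2^{j\sigma},\,2^{-j(3/p-1)}e^{-c\sqrt t\,2^j}\}$, and summing over $j$ only recovers $t^{-(l+\tilde\sigma)/2}$ up to logarithmic factors because the two regimes match at a frequency $2^{j_*}\sim\sqrt{\log t}/\sqrt t$ rather than $t^{-1/2}$. The standard fix in $L^2$, namely \eqref{R-E4}, relies on Plancherel/Cauchy--Schwarz in frequency and has no direct $L^p$ analogue, so it cannot be invoked to glue your base rate to the critical Gevrey norm. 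Your step (ii) is also problematic: Schonbek's Fourier-splitting is an $L^2$ energy argument; no version of it appears in the paper and its adaptation to $L^p$ Besov for $p\ne2$ (let alone $p=1$) is far from routine; and the chain ``uniform $\dot B^{-\sigma}_{p,\infty}$ bound $\Rightarrow$ embedding $\Rightarrow$ Bernstein $\Rightarrow$ $t^{-\tilde\sigma/2}$ decay'' is not valid, since embedding and Bernstein preserve uniform boundedness but do not create time decay. To close the proof you need to replace steps (i)--(ii) with the propagation of the Gevrey $\dot B^{-\sigma}_{p,\infty}$ norm, exploiting the divergence structure of the nonlinearities to compensate the loss near the upper endpoint $\sigma<\min\{1+3/p,1+3/p'\}$, and at $p=1$ the extended Coifman--Meyer theory to handle the Gevrey bilinear multipliers.
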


\begin{rem}
Theorem \ref{thm2} presents decay rates of $(u,\omega)$ with arbitrary order derivatives without asking any smallness condition on additional initial assumption.
In fact, one could immediately reach
\begin{eqnarray}\label{bound decay 22}
\|u\|_{\dot B^{\frac{3}{p}-1}_{p,q}}\leq C t^{-\frac{\sigma+\frac{3}{p}-1}{2}},
\end{eqnarray}
which indicates velocity of the solutions constructed in Theorem \ref{thm1} does uniformly decay in the critical space. Decay estimates in (\ref{bound decay 22}) may also be extended to Kato's solutions of the incompressible Navier-Stokes equation which corresponds to $\omega=0$.
\end{rem}

\begin{rem}
One could obtain uniform decay concerns $L^1$ space where
\begin{eqnarray}\|\Lambda^{l}u\|_{L^{1}}\leq C  t^{-\frac{\sigma}{2}-\frac{l}{2}};\quad \|\Lambda^{l}\omega\|_{L^{1}}\leq Ct^{-\frac{\sigma}{2}+\frac{1}{2}-\frac{l}{2}}.\end{eqnarray}
Compared to classical $L^q-L^p$-type estimates for the heat equation or Stokes system, we obtain Gevrey analytical solutions and their uniform decay in this endpoint case.

The main ingredient of obtaining $L^1$ decay is applying an extended Coifman-Meyer theory, which considers Gevrey estimates with Euclidean norm in frequency space. Then the changes of radius of analyticity by constants and equivalence of Gevrey multiplier with $l_{1}$ norm and Euclidean norm allow us to obtain desired estimates.
\end{rem}

\subsection{Strategy}

Let us first give a clear sight for behaviors of solution via the spectrum analysis. Decomposing the angular velocity $\omega=\mathcal{P}\omega+\mathcal{Q}\omega$ into the electromagnetic part $\mathcal{P}\omega=\frac{\nabla\times \nabla\times}{-\Delta}\omega$ and the fluid part $\mathcal{Q}\omega=\frac{\nabla \mathrm{div}}{-\Delta}\omega$ leads to the following extend system:
\begin{equation}\label{3.2}
\left\{
\begin{array}{l}\partial_{t}u-(\chi+\nu)\Delta u-2\chi\nabla\times\mathcal{P}\omega=f,\\ [1mm]
 \partial_{t}\mathcal{P}\omega-\mu\Delta\mathcal{P}\omega+4\chi\mathcal{P}\omega-2\chi\nabla\times u =\mathcal{P} g,\\[1mm]
 \partial_{t}\mathcal{Q}\omega-(\mu+\kappa)\Delta\mathcal{Q}\omega+2\chi\mathcal{Q}\omega=\mathcal{Q} g.\\[1mm]
 \end{array} \right.
\end{equation}

It is clearly that the fluid part $\mathcal{Q}\omega$ shares a parabolic behavior in the high frequencies while behaves as solutions of damping equations in the low frequencies. Regarding the electromagnetic part $\mathcal{P}\omega$, it is convenient to
introduce $\Omega\triangleq \nabla\times \mathcal{P}\omega$. Notice that $\mathrm{div}u=0$, which implies $\nabla\times\nabla\times u=-\Delta u$, the corresponding linear coupling system would be
\begin{equation}\label{3.3}
\left\{
\begin{array}{l}\partial_{t}u-(\chi+\nu)\Delta u-2\chi\Omega=0,\\ [1mm]
 \partial_{t}\Omega-\mu\Delta\Omega+4\chi\Omega+2\chi\Delta u =0.\\[1mm]
 \end{array} \right.
\end{equation}

Consequently, the new variable $(u, \Omega)$ satisfies the coupling $2\times 2$ system:
Taking the Fourier transform with respect to $x\in \mathbb{R}^{3}$ leads to
\begin{equation}\label{yun}
\frac{d}{dt}\left(
              \begin{array}{c}
                \hat{u} \\
                \hat{\Omega} \\
              \end{array}
            \right)
=A(\xi)\left(
              \begin{array}{c}
                \hat{u} \\
                \hat{\Omega} \\
              \end{array}
            \right)
           \quad \mbox{with}\quad A(\xi)=\left(
                                                          \begin{array}{cc}
                                                            (\chi+\nu)|\xi|^2 & -2\chi \\
                                                            -2\chi|\xi|^2 & \mu|\xi|^2+4\chi \\
                                                          \end{array}
                                                        \right).
\end{equation}
A simple calculation shows that the eigenvalues of (\ref{yun}) would be
$$\lambda_{\pm}=\frac{(\bar{\chi}+\mu)|\xi|^2+4\chi\pm\sqrt{\big((\bar{\chi}+\mu)|\xi|^2+4\chi\big)^2-4\bar{\chi}\mu|\xi|^4-16\nu\chi|\xi|^2)}}{2}$$
where $\bar{\chi}=\chi+\nu$.
Apparently eigenvalues indicate that in the low frequency part, i.e. $|\xi|\ll1$, there holds
$$\lambda_{+}\thicksim1;\lambda_{-}\thicksim|\xi|^2$$
which indicates that velocity $u$ behaves as heat equations while angular velocity $\omega$ shares a different behavior in the low frequencies, that is damping effect.
On the other hand, in the high frequencies $|\xi|\gg1$, it is found
$$\lambda_{\pm}\thicksim|\xi|^2 ,$$
where the behavior of  solutions in the high frequencies would be pure parabolic.

Base on the spectrum analysis, we shall treat high frequencies and low frequencies respectively. Our method is close to \cite{S} where the idea of effective velocity would be introduced to decouple the coupling system. On the other hand, more careful nonlinear estimates based on the frequency decomposition would be given and the global existence and uniqueness would be obtained via a standard fixed point argument.

The main idea of establishing large time behaviors of high order derivatives for solutions constructed in Theorem \ref{thm1} comes from the parabolic mechanics of (\ref{1.1}), which indicates us to consider analyticity of solutions. Inspired by  the property of Gevrey multiplier observed by Oliver, Titi \cite{OT} where
\begin{equation}\label{R-E4}
\|\Lambda^{q}u\|_{L^2}^{2}\leq c(p,q)\tau^{p-2q}\|u\|_{L^2}\|\Lambda^{p}e^{\tau \Lambda}u\|_{L^2}\end{equation}
for $0\leq p \leq2q$ and $\tau>0$, we would develop a Gevrey method from Schonbek's theory for Kato's solution in $L^p$ critical framework. Our  strategy forwards Theorem \ref{thm2} contains the following three steps:
\begin{itemize}
\item Gevrey analyticity in the critical space
\vspace{4pt}
\item Evolution of Gevrey norm under the particular regularity $-\sigma$
\vspace{4pt}
\item Decay estimates
\end{itemize}

The basic idea to establish Gevrey analyticity in the critical space originated from works in \cite{BBT} and again, the effective velocity would be applied to deal with our coupling system and furnish the linear Gevrey estimates.

Main differences compared to previous work in \cite{BBT,S,L-cras} come from bounding nonlinear Gevrey products. Precisely, in case $1<p<6$, our main tool standardly derives from Lemari$\acute{e}$-Rieusset \cite{L-cras} and Bae, Biswas and Tadmor \cite{BBT}, where the Gevrey product was rewritten as a bilinear operator $\mathcal{B}(f,g)$ under Fourier symbol of $l_{1}$ norm, i.e.
\begin{eqnarray}m(\xi,\eta)=e^{\sqrt{t}(|\xi+\eta|_{1}-|\xi|_{1}-|\eta|_{1})}.\end{eqnarray}
In fact, $\mathcal{B}(f,g)$ could be regarded as finite linear combinations of identity operator, Hilbert transform, Poisson kernel which are $L^p$ ($1<p<\infty$) bounded.

However, the failure of boundedness for the Hilbert transform in $L^1$ space prevents us from the classical theory in the terminal case $p=1$. To deal with nonlinear estimates in the endpoint Lebesgue space, we introduce a technique modification with  bilinear operator $\mathcal{B}(f,g)$ equipped with symbol under Euclidean norm and Fourier localization functions
\begin{eqnarray}m(\xi,\eta)=e^{\sqrt{t}(c|\xi+\eta|-c_{1}|\xi|-c_{2}|\eta|)}\varphi(\xi)\varphi(\eta)\end{eqnarray}
with proper constants $c, c_{1},c_{2}$, whose pointwise estimates (for even high order derivatives) satisfy the Coifman-Meyer condition, see Camil, Wihelm \cite{CW}. Then the nonclassical bilinear multiplier theory concerns change for radius of analyticity by constants combined with the observation of equivalence of Gevrey multiplier with $l_{1}$ norm and Euclidean norm under localization enable us to have for some confirmed $d>1$
\begin{multline*}
\|e^{\sqrt {t}\Lambda_{1}}(u,\omega)\|_{E^1_{T}}+\|e^{d\sqrt {t}\Lambda_{1}}(u,\omega)\|_{E^p_{T}}
\lesssim\mathcal{X}_{0,1}+\big(\|e^{\sqrt {t}\Lambda_{1}}(u,\omega)\|_{E^1_{T}}+\|e^{d\sqrt {t}\Lambda_{1}}(u,\omega)\|_{E^p_{T}}\big)^{2}
\end{multline*}
and cover the Gevrey analyticity in the terminal case.

Another important issue is to search for the optimal upper bound for $\sigma$, which corresponds to the sharpness of decay rates. At this moment, we shall make fully use of nonlinear structure where divergence free condition allows us to rewrite nonlinearities into divergence forms. This observation leads to gain $|\xi|$ in those low-high-high frequency interactions and compensate the singularity in estimates. We state that $\|e^{\sqrt {t}\Lambda_{1}}(u,\omega)\|_{E^p_{T}}\ll1$ ensures one could prove the evolution of Gevrey norm under the regularity $-\sigma$ without any smallness assumption on initial data in $\dot B^{-\sigma}_{p,\infty}$.

Finally, based on the evolution of Gevrey norm for solutions under the regularity $-\sigma$, we obtain decay estimates by generalizing (\ref{R-E4}) into $L^p$ Besov framework (Lemma \ref{derivatives}) where we find solutions decay polynomially in the low frequencies while decay exponentially in the high frequencies.

The rest of this paper unfolds as follows. Section 3 is devoted to establishing the global well-posedness, that is Theorem \ref{thm1}. Section 4 is the part where we establish the large time behaviors. In Appendix, we give some functional boxes concern Besov space and classical Gevrey product estimates.

Throughout the paper, $C>0$ stands for a generic ``constant". For brevity, $f\lesssim g$ means that $f\leq Cg$. It will also be understood that $\|(f,g)\|_{X}=\|f\|_{X}+\|g\|_{X}$ for all $f,g\in X$. For $ 1\leq p\leq \infty$, we denote by $L^{p}=L^{p}(\mathbb{R}^{d})$ the usual Lebesgue space on $\mathbb{R}^{d}$ with the norm $\|\cdot\|_{L^{p}}$.

\section{Global well-posedness}

In this section, we shall prove Theorem \ref{thm1}. Different analysis would be imposed on high frequencies and low frequencies. Moreover, without loss of generality, we assume $\mu=\kappa=1$ while $\nu=\chi=\frac{1}{2}$.

For the high frequency part, since the coupling terms $\nabla\times\omega$ and $\nabla\times u$ are actually low degree terms, the following system will be consider:
\begin{equation}\label{3.1}
\left\{
\begin{array}{l}\partial_{t}u-\Delta u=\nabla\times\omega+f,\\ [1mm]
 \partial_{t}\omega-\Delta\omega-\nabla\mathrm{div}\omega=-2\omega+\nabla\times u +g,\\[1mm]
 \end{array} \right.
\end{equation}
 where $f=-\mathbf{P}[u\cdot\nabla u]$ and $g=-u\cdot\nabla\omega$.

The optimal smooth effect of the $Lam\acute{e}$ operator $\mathcal{L}u\triangleq \Delta u+\nabla \mathrm{div}u$ immediately yields for the high frequency cut-off $j>j_{0}$
\begin{eqnarray}\label{sum1}
\|u\|^{h}_{\tilde{L}^{\infty}_{T}(\dot{B}^{\frac{3}{p}-1}_{p,q})}+\|u\|^{h}_{\tilde{L}^{1}_{T}(\dot{B}^{\frac{3}{p}+1}_{p,q})}
\lesssim\|u_{0}\|^{h}_{\dot{B}^{\frac{3}{p}-1}_{p,q}}+\|\omega\|^{h}_{\tilde{L}^{1}_{T}(\dot{B}^{\frac{3}{p}}_{p,q})}+\|f\|^{h}_{\tilde{L}^{1}_{T}(\dot{B}^{\frac{3}{p}-1}_{p,q})};
\end{eqnarray}
\begin{multline}\label{sum2}
\,\,\,\,\|\omega\|^{h}_{\tilde{L}^{\infty}_{T}(\dot{B}^{\frac{3}{p}-1}_{p,q})}+\|\omega\|^{h}_{\tilde{L}^{1}_{T}(\dot{B}^{\frac{3}{p}+1}_{p,q})}
\lesssim\|\omega_{0}\|^{h}_{\dot{B}^{\frac{3}{p}-1}_{p,q}}+\|\omega\|^{h}_{\tilde{L}^{1}_{T}(\dot{B}^{\frac{3}{p}-1}_{p,q})}\\
+\|u\|^{h}_{\tilde{L}^{1}_{T}(\dot{B}^{\frac{3}{p}}_{p,q})}+\|g\|^{h}_{\tilde{L}^{1}_{T}(\dot{B}^{\frac{3}{p}-1}_{p,q})}.
\end{multline}
Now there holds
$$\|\omega\|^{h}_{\tilde{L}^{1}_{T}(\dot{B}^{\frac{3}{p}}_{p,q})}\lesssim 2^{-j_{0}}\|\omega\|^{h}_{\tilde{L}^{1}_{T}(\dot{B}^{\frac{3}{p}+1}_{p,q})};
\|u\|^{h}_{\tilde{L}^{1}_{T}(\dot{B}^{\frac{3}{p}}_{p,q})}\lesssim 2^{-j_{0}}\|u\|^{h}_{\tilde{L}^{1}_{T}(\dot{B}^{\frac{3}{p}+1}_{p,q})},$$
hence a large enough $j_{0}$ would ensure
\begin{eqnarray}\label{popoll}
\|(u,\omega)\|^{h}_{\tilde{L}^{\infty}_{T}(\dot{B}^{\frac{3}{p}-1}_{p,q})\cap\tilde{L}^{1}_{T}(\dot{B}^{\frac{3}{p}+1}_{p,q})}
\lesssim\|(u_{0},\omega_{0})\|^{h}_{\dot{B}^{\frac{3}{p}-1}_{p,q}}+\|(f,g)\|^{h}_{\tilde{L}^{1}_{T}(\dot{B}^{\frac{3}{p}-1}_{p,q})}.
\end{eqnarray}

To deal with the coupling system in the low frequencies, we use the method from Haspot \cite{ehome} which defined a new variable named effective velocity, that, somehow, allows to decouple the coupling the system. We underline that by the method of effective velocity, we may be able to reach the parabolic regularity for $u$ in the low frequencies and find the damping effect for $\omega$.

Let us recall the extended system (\ref{3.2}) under compressible/incompressible decomposition. Apparently Duhamel theory combining with localization multiplier implies $\mathcal{Q}\omega$ satisfies the following integral equation:
\begin{eqnarray}
\Delta_{j}\mathcal{Q}\omega=\Delta_{j}e^{\Delta t-2t}\mathcal{Q}\omega_{0}+\Delta_{j}\int^{t}_{0}e^{\Delta (t-s)-2(t-s)}\mathcal{Q} g(s)ds.
\end{eqnarray}
Now we claim that for multiplier $\Delta_{j}e^{\Delta t-2t}$, we have the following kernel estimate holds true for any distribution $f$ while $p\in[1,\infty]$:
$$\|\Delta_{j}e^{\Delta t-2t}f\|_{L^p}\lesssim e^{-(2^{2j}+2)t}\|\Delta_{j}f\|_{L^p}.$$
In fact, the above inequality is easily obtained by heat kernel estimates. Hence we have
\begin{eqnarray}\label{p.77}
\|\mathcal{Q}\omega\|^{\ell}_{\tilde L^{\infty}_{T}(\dot B^{\frac{3}{p}}_{p,q})\cap\tilde L^{1}_{T}(\dot B^{\frac{3}{p}}_{p,q})}\lesssim\|\mathcal{Q}\omega_{0}\|^{\ell}_{\dot B^{\frac{3}{p}}_{p,q}}+\|\mathcal{Q} g\|^{\ell}_{\tilde L^{1}_{T}(\dot B^{\frac{3}{p}}_{p,q})}.
\end{eqnarray}

As for the coupling system, we define a new variable $R=\nabla\times\mathcal{P}\omega+\frac{1}{2}\Delta u$. In terms of $R$, we are able to consider the following system which have been decoupled:
\begin{equation}\label{3.8}
\left\{
\begin{array}{l}\partial_{t}u-\frac{1}{2}\Delta u-R=f,\\ [1mm]
 \partial_{t}R+2R-\frac{3}{2}\Delta R=-\frac{1}{4}\Delta^{2}u+\frac{1}{2}\Delta f+\nabla\times g.\\[1mm]
 \end{array} \right.
\end{equation}
Similar calculations as $\mathcal{Q}\omega$ leads us to
\begin{multline}\label{4.70}
\|R\|^{\ell}_{\tilde L^{\infty}_{T}(\dot B^{\frac{3}{p}-1}_{p,q})\cap\tilde L^{1}_{T}(\dot B^{\frac{3}{p}-1}_{p,q})}\lesssim\|R_{0}\|^{\ell}_{\dot B^{\frac{3}{p}-1}_{p,q}}+\|\Delta^{2}u\|^{\ell}_{\tilde L^{1}_{T}(\dot B^{\frac{3}{p}-1}_{p,q})}+\|\Delta f+\nabla\times g\|^{\ell}_{\tilde L^{1}_{T}(\dot B^{\frac{3}{p}-1}_{p,q})}.
\end{multline}
Then we turn back to $u$ which satisfies
$$\partial_{t}u-\frac{1}{2}\Delta u=R+f.$$
Optimal smoothing effect for heat equations let us have
\begin{eqnarray}\label{4.7}
\|u\|^{\ell}_{\tilde L^{\infty}_{T}(\dot B^{\frac{3}{p}-1}_{p,q})\cap\tilde L^{1}_{T}(\dot B^{\frac{3}{p}+1}_{p,q})}\lesssim\|u_{0}\|^{\ell}_{\dot B^{\frac{3}{p}-1}_{p,q}}+\|R\|^{\ell}_{\tilde L^{1}_{T}(\dot B^{\frac{3}{p}-1}_{p,q})}+\|f\|^{\ell}_{\tilde L^{1}_{T}(\dot B^{\frac{3}{p}-1}_{p,q})}.
\end{eqnarray}
Therefore (\ref{4.70}) and (\ref{4.7}) combining with
$$\|\Delta^{2}u\|^{\ell}_{\tilde L^{1}_{T}(\dot B^{\frac{3}{p}-1}_{p,q})}\lesssim2^{2k_{0}}\|u\|^{\ell}_{\tilde L^{1}_{T}(\dot B^{\frac{3}{p}+1}_{p,q})},$$
we arrive at for frequency cut-off $k_{0}$ small enough such that
\begin{multline}\label{4.77}
\|u\|^{\ell}_{\tilde L^{\infty}_{T}(\dot B^{\frac{3}{p}-1}_{p,q})\cap\tilde L^{1}_{T}(\dot B^{\frac{3}{p}+1}_{p,q})}+\|R\|^{\ell}_{\tilde L^{\infty}_{T}(\dot B^{\frac{3}{p}-1}_{p,q})\cap\tilde L^{1}_{T}(\dot B^{\frac{3}{p}-1}_{p,q})}
\lesssim\|u_{0}\|^{\ell}_{\dot B^{\frac{3}{p}-1}_{p,q}}\\
+\|R_{0}\|^{\ell}_{\dot B^{\frac{3}{p}-1}_{p,q}}+\|f+\nabla\times g\|^{\ell}_{\tilde L^{1}_{T}(\dot B^{\frac{3}{p}-1}_{p,q})}.
\end{multline}
Naturally we find
\begin{multline}\label{4.8}
\|\nabla\times\mathcal{P}\omega\|^{\ell}_{\tilde L^{\infty}_{T}(\dot B^{\frac{3}{p}-1}_{p,q})\cap\tilde L^{1}_{T}(\dot B^{\frac{3}{p}-1}_{p,q})}\lesssim\|R+\Delta u\|^{\ell}_{\tilde L^{\infty}_{T}(\dot B^{\frac{3}{p}-1}_{p,q})\cap\tilde L^{1}_{T}(\dot B^{\frac{3}{p}-1}_{p,q})}\\
\lesssim\|u_{0}\|^{\ell}_{\dot B^{\frac{3}{p}-1}_{p,q}}+\|\nabla\times\mathcal{P}\omega_{0}\|^{\ell}_{\dot B^{\frac{3}{p}-1}_{p,q}}+\|f+\nabla\times g\|^{\ell}_{\tilde L^{1}_{T}(\dot B^{\frac{3}{p}-1}_{p,q})}.
\end{multline}
Because that $\mathrm{div}\mathcal{P}\omega=0$, we have
$$\|\nabla\times\mathcal{P}\omega\|^{\ell}_{\tilde L^{\infty}_{T}(\dot B^{\frac{3}{p}-1}_{p,q})\cap\tilde L^{1}_{T}(\dot B^{\frac{3}{p}-1}_{p,q})}\sim
\|\mathcal{P}\omega\|^{\ell}_{\tilde L^{\infty}_{T}(\dot B^{\frac{3}{p}}_{p,q})\cap\tilde L^{1}_{T}(\dot B^{\frac{3}{p}}_{p,q})}.$$
Hence (\ref{4.77}) and (\ref{4.8}) give:
\begin{multline}\label{lllppp}
\|u\|^{\ell}_{\tilde L^{\infty}_{T}(\dot B^{\frac{3}{p}-1}_{p,q})\cap\tilde L^{1}_{T}(\dot B^{\frac{3}{p}+1}_{p,q})}+\|\omega\|^{\ell}_{\tilde L^{\infty}_{T}(\dot B^{\frac{3}{p}}_{p,q})\cap\tilde L^{1}_{T}(\dot B^{\frac{3}{p}}_{p,q})}
\lesssim\|(u_{0},\Lambda \omega_{0})\|^{\ell}_{\dot B^{\frac{3}{p}-1}_{p,q}}+\|f+\nabla\times g\|^{\ell}_{\tilde L^{1}_{T}(\dot B^{\frac{3}{p}-1}_{p,q})}.
\end{multline}
Consequently, (\ref{p.77}), (\ref{lllppp}) let us finish the linear estimates with
\begin{eqnarray}
\|(u,\omega)\|_{E^p}\lesssim \mathcal{X}_{0,p}+\|(f,g)\|^{h}_{\tilde L^{1}_{T}(\dot B^{\frac{3}{p}-1}_{p,q})}+\|(f,\Lambda g)\|^{\ell}_{\tilde L^{1}_{T}(\dot B^{\frac{3}{p}-1}_{p,q})}.
\end{eqnarray}

Next we bound those nonlinearities, i.e. $\|(f,g)\|^{h}_{\tilde{L}^{1}_{T}(\dot{B}^{\frac{3}{p}-1}_{p,q})}$ and $\|(f,\Lambda g)\|^{\ell}_{\tilde L^{1}_{T}(\dot B^{\frac{3}{p}-1}_{p,q})}$. Actually the convection term $f$ shares the same calculations as in \cite{S} and we only treat with $g$. We have Bony decomposition
$$g=u\cdot\nabla\omega=T_{u}\nabla\omega+R(u,\nabla\omega)+T_{\nabla\omega}u.$$
Clearly, for the first one, there holds
\begin{eqnarray}
\|T_{u}\nabla\omega\|^{h}_{\tilde L^{1}_{T}(\dot B^{\frac{3}{p}-1}_{p,q})}
\lesssim\|u\|_{\tilde L^{\infty}_{T}(\dot B^{\frac{3}{p}-1}_{p,q})}\|\nabla\omega\|_{\tilde L^{1}_{T}(\dot B^{\frac{3}{p}}_{p,q})}.
\end{eqnarray}
Owing to the embedding in the low frequencies, the condition $s\in[1,2]$ ensures
$$\|\nabla\omega\|_{\tilde L^{1}_{T}(\dot B^{\frac{3}{p}}_{p,q})}\lesssim\|\omega\|^{h}_{\tilde L^{1}_{T}(\dot B^{\frac{3}{p}+1}_{p,q})}+
\|\omega\|^{\ell}_{\tilde L^{1}_{T}(\dot B^{\frac{3}{p}}_{p,q})}\lesssim\|\omega\|_{E^p_{T}},$$
and we deduce
\begin{eqnarray}
\|T_{u}\nabla\omega\|^{h}_{\tilde L^{1}_{T}(\dot B^{\frac{3}{p}-1}_{p,q})}\lesssim\|(u,\omega)\|^{2}_{E^p_{T}}.
\end{eqnarray}

As for remainder, since that $p$ satisfies $\frac{6}{p}-1>0$, there holds
\begin{eqnarray}
\|R(u,\nabla\omega)\|^{h}_{\tilde L^{1}_{T}(\dot B^{\frac{3}{p}-1}_{p,q})}
\lesssim\|u\|_{\tilde L^{\infty}_{T}(\dot B^{\frac{3}{p}-1}_{p,q})}\|\nabla\omega\|_{\tilde L^{1}_{T}(\dot B^{\frac{3}{p}}_{p,q})}\lesssim\|(u,\omega)\|^{2}_{E^p_{T}}.
\end{eqnarray}
As for the last, one, we infer that
\begin{multline}
\|T_{\nabla\omega}u\|^{h}_{\tilde L^{1}_{T}(\dot B^{\frac{3}{p}-1}_{p,q})}\lesssim\|T_{\nabla\omega}u\|^{h}_{\tilde L^{1}_{T}(\dot B^{\frac{3}{p}}_{p,q})}
\lesssim\|\nabla\omega\|_{\tilde L^{\infty}_{T}(\dot B^{\frac{3}{p}-1}_{p,q})}\|u\|_{\tilde L^{1}_{T}(\dot B^{\frac{3}{p}+1}_{p,q})}\lesssim\|(u,\omega)\|^{2}_{E^p_{T}}
\end{multline}
Therefore we get to
\begin{eqnarray}
\|g\|^{h}_{\tilde L^{1}_{T}(\dot B^{\frac{3}{p}-1}_{p,q})}\lesssim\|(u,\omega)\|^{2}_{E^p_{T}}.
\end{eqnarray}

For bounding $g$ in the low frequencies, there similarly holds
\begin{eqnarray*}
\|T_{u}\nabla\omega\|^{\ell}_{\tilde L^{1}_{T}(\dot B^{\frac{3}{p}}_{p,q})}\lesssim\|T_{u}\nabla\omega\|^{\ell}_{\tilde L^{1}_{T}(\dot B^{\frac{3}{p}-2}_{p,q})}\lesssim\|u\|_{\tilde L^{\infty}_{T}(\dot B^{\frac{3}{p}-1}_{p,q})}\|\nabla\omega\|_{\tilde L^{1}_{T}(\dot B^{\frac{3}{p}-1}_{p,q})};
\end{eqnarray*}
\begin{eqnarray*}
\|R(u,\nabla\omega^{\ell})\|^{\ell}_{\tilde L^{1}_{T}(\dot B^{\frac{3}{p}}_{p,q})}
\lesssim\|u\|_{\tilde L^{1}_{T}(\dot B^{\frac{3}{p}+1}_{p,q})}\|\nabla\omega^{\ell}\|_{\tilde L^{\infty}_{T}(\dot B^{\frac{3}{p}-1}_{p,q})};
\end{eqnarray*}
\begin{eqnarray*}
\|R(u,\nabla\omega^{h})\|^{\ell}_{\tilde L^{1}_{T}(\dot B^{\frac{3}{p}}_{p,q})}
\lesssim\|R(u,\nabla\omega^{h})\|^{\ell}_{\tilde L^{1}_{T}(\dot B^{\frac{3}{p}-1}_{p,q})}
\lesssim\|u\|_{\tilde L^{\infty}_{T}(\dot B^{\frac{3}{p}-1}_{p,q})}\|\nabla\omega^{h}\|_{\tilde L^{1}_{T}(\dot B^{\frac{3}{p}}_{p,q})};
\end{eqnarray*}
\begin{eqnarray*}
\|T_{\nabla\omega^{\ell}}u\|^{\ell}_{\tilde L^{1}_{T}(\dot B^{\frac{3}{p}}_{p,q})}
\lesssim\|\nabla\omega^{\ell}\|_{\tilde L^{\infty}_{T}(\dot B^{\frac{3}{p}-1}_{p,q})}\|u\|_{\tilde L^{1}_{T}(\dot B^{\frac{3}{p}+1}_{p,q})};
\end{eqnarray*}
\begin{eqnarray*}
\|T_{\nabla\omega^{h}}u\|^{\ell}_{\tilde L^{1}_{T}(\dot B^{\frac{3}{p}}_{p,q})}
\lesssim\|T_{\nabla\omega^{h}}u\|^{\ell}_{\tilde L^{1}_{T}(\dot B^{\frac{3}{p}-1}_{p,q})}
\lesssim\|\nabla\omega^{h}\|_{\tilde L^{\infty}_{T}(\dot B^{\frac{3}{p}-2}_{p,q})}\|u\|_{\tilde L^{1}_{T}(\dot B^{\frac{3}{p}+1}_{p,q})}.
\end{eqnarray*}
Hence above estimates let us arrive at
\begin{eqnarray}
\|g\|^{\ell}_{\tilde L^{1}_{T}(\dot B^{\frac{3}{p}}_{p,q})}\lesssim\|(u,\omega)\|^{2}_{E^p_{T}}.
\end{eqnarray}

Consequently, we finally arrive at
\begin{eqnarray}
\|(u,\omega)\|_{E^p_{T}}\lesssim \mathcal{X}_{0,p}+\|(u,\omega)\|^{2}_{E^p_{T}}.
\end{eqnarray}
Finally the global existence and uniqueness is obtained by standard fixed point theory and a continuity argument.

\section{Large time behaviors}

In this section, we present the proof of Theorem \ref{thm2}. The cornerstone of our approach is to establish Gevrey analyticity in the critical space and even lower regularity $-\sigma$. Analysis in general $L^p$ ($1<p<\infty$) would be somehow classical while attentions shall be paid on those endpoint frameworks where the method in \cite{BBT} couldn't cover the entire case since Gevrey norm equipped with Fourier symbol in $l^1$ norm brings singularity in the frequency space.
To overcome these difficulties,  some new idea which bases on the changes of radius of analyticity and an extended Coifman-Meyer argument, i.e. Corollary \ref{Euclidean}, would be applied to bound nonlinear interactions.

\subsection{Extended Coifman-Meyer theory and Gevrey multiplier}
\hspace*{\fill}

We would first introduce some lemmas concern the extended Coifman-Meyer argument which is essential to give Gevrey estimates in $L^1$ space.
The following modified version of general Coifman-Meyer theorem takes boundedness of bilinear operator under Fourier localization multiplier into considerations, proved in Guo, Nakanishi \cite{GN}:
\begin{lem}\label{mutiplier}
Let $\beta_{1},  \beta_{2}$ be the multiple index. Suppose it holds for $m(\xi,\eta)$ for $|\beta_{1}|,|\beta_{2}|\geq0$ that
$$\big|\partial^{\beta_{1}}_{\xi}\partial^{\beta_{2}}_{\eta}m(\xi,\eta)\big|\leq C_{\beta}|\xi|^{-|\beta_{1}|}|\eta|^{-|\beta_{2}|}.$$
Assume moreover that $m(\xi,\eta)$ is smooth of $\xi$, $\eta$ supported in $\xi\in \lambda_{1} \mathcal{C}$, $\eta\in \lambda_{2} \mathcal{C}$, then for $1\leq p, q, r\leq\infty$, then the associated bilinear multiplier operator $\mathcal{B}(f,g)$ maps $L^p\times L^q\rightarrow L^r$ satisfying
$$\|\mathcal{B}(f,g)\|_{L^r}\leq C\|f\|_{L^p}\|g\|_{L^q}$$
where constant $C$ is independent of $\lambda_{1}, \lambda_{2}$.
\end{lem}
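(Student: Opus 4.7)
The plan is to strip off the scale parameters $\lambda_{1},\lambda_{2}$ by a dilation, expand the rescaled symbol in a rapidly convergent Fourier series on a fixed cube, and then bound each term by a H\"older product of frequency-localised pieces of $f$ and $g$. The key observation is that once we rescale, all constants become manifestly scale-free.

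First I would set $\xi'=\xi/\lambda_{1}$, $\eta'=\eta/\lambda_{2}$ and define $\tilde m(\xi',\eta')\triangleq m(\lambda_{1}\xi',\lambda_{2}\eta')$. The Mihlin-type hypothesis on $m$ then transfers to the uniform derivative bound $|\partial^{\beta_{1}}_{\xi'}\partial^{\beta_{2}}_{\eta'}\tilde m(\xi',\eta')|\leq C_{\beta}$ on the fixed compact set $\mathcal{C}\times\mathcal{C}$, with $C_{\beta}$ independent of $\lambda_{1},\lambda_{2}$. Choosing a cube $Q\supset 2\mathcal{C}$ and a smooth cutoff $\chi\equiv 1$ on $\mathcal{C}\times\mathcal{C}$ supported in $Q\times Q$, I would expand $\chi\tilde m$ in a Fourier series on $Q\times Q$,
\begin{equation*}
\tilde m(\xi',\eta')=\sum_{k,l\in\mathbb{Z}^{3}}c_{k,l}\,e^{i(k\cdot\xi'+l\cdot\eta')}\qquad\text{on }\mathcal{C}\times\mathcal{C},
\end{equation*}
with $|c_{k,l}|\leq C_{N}(1+|k|)^{-N}(1+|l|)^{-N}$ for every $N\geq 0$, obtained by iterated integration by parts against the uniform derivative bounds on $\tilde m$.

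Undoing the dilation, the bilinear operator splits as
\begin{equation*}
\mathcal{B}(f,g)(x)=\sum_{k,l}c_{k,l}\,\bigl(\widetilde\Delta_{\lambda_{1}}f\bigr)\!\bigl(x+k/\lambda_{1}\bigr)\cdot\bigl(\widetilde\Delta_{\lambda_{2}}g\bigr)\!\bigl(x+l/\lambda_{2}\bigr),
\end{equation*}
where $\widetilde\Delta_{\lambda}$ is a smooth Littlewood--Paley projector localised around frequency $\lambda$ (coming from the cutoff $\chi$ restoring the correct frequency support). Taking $L^{r}$ norms, applying H\"older's inequality at the natural exponent $1/r=1/p+1/q$, and using translation invariance of $L^{p}$ norms together with uniform $L^{p}$-boundedness of $\widetilde\Delta_{\lambda}$, we get
\begin{equation*}
\|\mathcal{B}(f,g)\|_{L^{r}}\leq\sum_{k,l}|c_{k,l}|\,\|f\|_{L^{p}}\|g\|_{L^{q}}\lesssim\|f\|_{L^{p}}\|g\|_{L^{q}},
\end{equation*}
the last step using summability of $(c_{k,l})$ for $N$ large enough.

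The main obstacle, as I see it, is really just bookkeeping: ensuring no factor of $\lambda_{1}$ or $\lambda_{2}$ leaks into any intermediate constant. This is precisely the point of performing the rescaling first; afterwards every step (Fourier expansion, coefficient decay, summation) takes place on the fixed annulus $\mathcal{C}\times\mathcal{C}$ and is insensitive to $\lambda_{j}$. The scaling reappears only through the translations $x\mapsto x+k/\lambda_{j}$ and the smooth cutoffs $\widetilde\Delta_{\lambda_{j}}$, which are $L^{p}$-isometries (resp.\ uniformly $L^{p}$-bounded) with constants independent of $\lambda_{j}$. A secondary technical point worth verifying is that the cutoff $\chi$ can be chosen so that $\chi\tilde m$ extends periodically and smoothly to $Q\times Q$; this is what legitimises term-by-term convergence of the Fourier series and the decay estimate on $c_{k,l}$.
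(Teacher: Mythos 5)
Your proof is correct, and since the paper does not actually prove Lemma~\ref{mutiplier} but simply cites Guo--Nakanishi \cite{GN}, your write-up effectively supplies the omitted argument. The rescale-and-Fourier-series expansion you use is the classical Coifman--Meyer mechanism: after the dilation $\xi'=\xi/\lambda_1$, $\eta'=\eta/\lambda_2$, the Mihlin-type bounds become uniform on $\mathcal{C}\times\mathcal{C}$ with constants $C_{\beta}$ independent of $\lambda_1,\lambda_2$; expanding the compactly supported, uniformly smooth rescaled symbol in a rapidly decaying Fourier series on a fixed cube reduces $\mathcal{B}$ to a sum of translations composed with fixed frequency cut-offs; and H\"older plus translation invariance of $L^p$ norms then yield scale-free constants. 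This is precisely the route taken in the cited reference (and originally by Coifman--Meyer), so you have not taken a genuinely different approach---you have filled in details the paper leaves to the literature. The one place to be slightly more careful, and which you already flag, is that the periodic extension of $\tilde m$ (or of $\chi\tilde m$) must be smooth across the boundary of the cube; since $\tilde m$ is supported strictly inside $Q\times Q$, this is automatic.

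One point worth recording: as stated, Lemma~\ref{mutiplier} allows arbitrary $1\le p,q,r\le\infty$, but your H\"older step (correctly) requires the conjugate relation $\frac{1}{r}=\frac{1}{p}+\frac{1}{q}$, and that is all your argument delivers. This is not a defect of your proof but an imprecision in the Lemma's statement: every downstream invocation of Corollary~\ref{Euclidean} in Sections 3.2--3.3 is of the form $L^{p'}\times L^{p}\to L^{1}$, $L^{2}\times L^{2}\to L^{1}$, or $L^{\infty}\times L^{1}\to L^{1}$, all of which satisfy the H\"older relation, so your proof covers exactly what the paper actually uses.
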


Based on Lemma \ref{mutiplier}, we immediately give the following Corollary concerns Gevrey estimates for Euclidean norm which even covers the endpoint Lebesgue space in further analysis:
\begin{cor}\label{Euclidean}
Define $m(\xi,\eta)$ as:
\begin{eqnarray}\label{erika}m(\xi,\eta)=e^{\gamma(c|\xi+\eta|-c_{1}|\xi|-c_{2}|\eta|)}\end{eqnarray}
where constants $c,c_{1},c_{2}>0$ are confirmed. If $\xi\in \lambda_{1} \mathcal{C}$, $\eta\in \lambda_{2} \mathcal{C}$ for $\lambda_{1}\leq k\lambda_{2}$ with $k>0$, then
for any $1\leq r, p, q\leq\infty$, we have for some constant $C$
independent of $\gamma, \lambda_{1}, \lambda_{2}$
$$\|\mathcal{B}(f,g)\|_{L^r}\leq C\|f\|_{L^{p}}\|g\|_{L^{q}}.$$
\end{cor}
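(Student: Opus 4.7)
The plan is to verify the Coifman--Meyer type derivative estimates required by Lemma \ref{mutiplier} for the symbol $m(\xi,\eta) = e^{\gamma\Phi(\xi,\eta)}$ with $\Phi := c|\xi+\eta| - c_1|\xi| - c_2|\eta|$, and then to conclude by direct application of that lemma.

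First, I would record two structural facts on the joint support $\xi \in \lambda_1 \mathcal{C}$, $\eta \in \lambda_2 \mathcal{C}$ with $\lambda_1 \leq k\lambda_2$ (read in the low--high paraproduct regime, so that $k$ is a fixed, sufficiently small positive constant). The reverse triangle inequality yields $|\xi+\eta| \geq |\eta| - |\xi| \gtrsim \lambda_2$, which guarantees that $\Phi$ is smooth on the support. Reading the word ``confirmed'' as $c \leq \min(c_1,c_2) - \delta$ for some fixed $\delta > 0$, the forward triangle inequality gives $\Phi \leq -\delta(|\xi|+|\eta|)$, hence the pointwise decay
$$|m(\xi,\eta)| \leq e^{-\gamma\delta(|\xi|+|\eta|)} = e^{-\gamma\delta|\xi|}\cdot e^{-\gamma\delta|\eta|}.$$
This exponential decay in $\gamma(|\xi|+|\eta|)$ is what will absorb the powers of $\gamma$ produced by differentiation.

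Next, I would expand $\partial^{\beta_1}_\xi\partial^{\beta_2}_\eta m$ via Fa\`a di Bruno's formula as $m$ multiplied by a sum over partitions of $(\beta_1,\beta_2)$ of products $\gamma^n\prod_i \partial^{\alpha_i}\Phi$. Using $|\partial^{\alpha}|x|| \lesssim |x|^{1-|\alpha|}$ applied to each of $|\xi|$, $|\eta|$, $|\xi+\eta|$, together with the elementary inequality
$$\gamma^n e^{-\gamma\delta r}\leq C_n r^{-n},\qquad r>0,$$
each factor $\gamma^n$ is converted into an inverse power of $|\xi|$ or $|\eta|$ by pairing it with the appropriate half of the split decay factor $e^{-\gamma\delta|\xi|}\cdot e^{-\gamma\delta|\eta|}$. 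After a bookkeeping of the partitions this produces
$$\bigl|\partial^{\beta_1}_\xi\partial^{\beta_2}_\eta m(\xi,\eta)\bigr| \leq C_\beta\,|\xi|^{-|\beta_1|}|\eta|^{-|\beta_2|}$$
uniformly in $\gamma,\lambda_1,\lambda_2$, after which Lemma \ref{mutiplier} delivers the bilinear bound with constant independent of the three parameters.

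The main obstacle is the sharp distribution of the inverse powers between $|\xi|$ and $|\eta|$: a naive estimate that uses only $|\xi+\eta|\sim\lambda_2$ on the support gives the weaker $\lambda_2^{-|\beta_1|-|\beta_2|}$, losing the factor $(\lambda_1/\lambda_2)^{|\beta_1|}$. Recovering the correct $\lambda_1^{-|\beta_1|}\lambda_2^{-|\beta_2|}$ requires allocating each $\gamma^n$ to the correct half of the split decay factor, and exploiting that $\partial_\xi\Phi$ contains the term $-c_1\partial|\xi|$ (whose high-order derivatives scale as $\lambda_1^{1-|\alpha|}$, dominating the $|\xi+\eta|$-contributions since $\lambda_1\leq\lambda_2$), so that pure $\xi$-derivative pieces of $\Phi$ naturally produce $\lambda_1^{-|\beta_1|}$ once combined with $\gamma^n e^{-\gamma\delta|\xi|}\lesssim\lambda_1^{-n}$, and symmetrically in $\eta$ for the mixed-derivative contributions.
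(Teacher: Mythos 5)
Your overall plan — verify the Coifman--Meyer derivative bounds for the symbol on the low--high dyadic support and then invoke Lemma \ref{mutiplier} — is exactly the paper's, and your Fa\`a di Bruno bookkeeping for arbitrary $(\beta_1,\beta_2)$ is actually more explicit than the paper's (which only works out the first-order $\xi$- and $\eta$-derivatives and then cites Fa\`a di Bruno). Your observation about why the correct mixed-homogeneity $|\xi|^{-|\beta_1|}|\eta|^{-|\beta_2|}$ rather than $\lambda_2^{-|\beta_1|-|\beta_2|}$ must be recovered is also a good reading of the mechanism.

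There is, however, a genuine problem with your reading of ``confirmed.'' You posit $c\leq\min(c_1,c_2)-\delta$ so that $\Phi\leq-\delta(|\xi|+|\eta|)$ follows from the plain triangle inequality. That is \emph{not} the constraint used in the paper, and it is in fact violated in the paper's own applications of the corollary: in the proof of Lemma \ref{lem4.0}, the constant $c$ is produced by Lemma \ref{G1-G2} with $c>1$, while $c_1$ is subsequently taken \emph{small} (so $c>c_1$), and only $c_2$ is taken large. The paper's version of the corollary must therefore hold without assuming $c<c_1$. What the paper actually does is choose $c_2$ large enough (depending on $c,c_1,k$) so that, \emph{using the support restriction} $\lambda_1\leq k\lambda_2$, one has $\Phi\leq-|\eta|\sim-\lambda_2$ on the support; the $\gamma$-powers in the derivative estimates are then absorbed by $e^{-\gamma\lambda_2}$ together with $\lambda_1\lesssim\lambda_2$. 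Your argument is salvageable with the correct constraint: since on the low--high support $|\xi|\lesssim|\eta|$ one has $-|\eta|\leq-\frac{1}{1+C k}(|\xi|+|\eta|)$, so the paper's pointwise bound $\Phi\leq-|\eta|$ still yields the split decay $|m|\leq e^{-\gamma\delta(|\xi|+|\eta|)}$ you need, and the rest of your bookkeeping goes through verbatim. Two smaller points: (i) the cutoffs $\varphi(\xi/\lambda_1)\varphi(\eta/\lambda_2)$ must formally be built into the symbol for Lemma \ref{mutiplier} to apply, and their derivatives contribute admissible $\lambda_1^{-|\beta_1|}\lambda_2^{-|\beta_2|}$ factors as in the paper; (ii) you should state explicitly that $k$ must be small enough that $|\xi+\eta|\gtrsim\lambda_2>0$ on the support, which is what guarantees smoothness of $|\xi+\eta|$ there.
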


\begin{proof}
 To prove Corollary \ref{Euclidean}, it is enough to verify Coifman-Meyer conditions for $m(\xi,\eta)\varphi(\frac{\xi}{\lambda_{1}})\varphi(\frac{\eta}{\lambda_{2}})$. Indeed, since that $\lambda_{1}\leq k\lambda_{2}$, one could seek for a constant
 $c_{2}$ depending on $c,c_{1}$ to be large enough such that
 $$c|\xi+\eta|-c_{1}|\xi|-c_{2}|\eta|\leq -|\eta|\sim -\lambda_{2},$$
then we have
\begin{eqnarray*}\label{pointwise1}
|\partial_{\xi_{1}}m(\xi,\eta)|&=&\Big|\frac{\xi_{1}}{\lambda_{1}|\xi|}e^{\gamma(c|\xi+\eta|-c_{1}|\xi|-c_{2}|\eta|)}\varphi'(\frac{\xi}{\lambda_{1}})\varphi(\frac{\eta}{\lambda_{2}})\\
\nonumber&+&\gamma\big(\frac{c(\xi_{1}+\eta_{1})}{|\xi+\eta|}-\frac{c_{1}\xi_{1}}{|\xi|}\big)e^{\gamma(c|\xi+\eta|-c_{1}|\xi|-c_{2}|\eta|)}\varphi(\frac{\xi}{\lambda_{1}})\varphi(\frac{\eta}{\lambda_{2}})\Big|\\
\nonumber&\lesssim&\frac{1}{\lambda_{1}}+\frac{k\gamma\lambda_{2}}{\lambda_{1}}e^{-\gamma\lambda_{2}}\lesssim\frac{1}{|\xi|}
\end{eqnarray*}
and
\begin{eqnarray*}\label{pointwise2}
|\partial_{\eta_{1}}m(\xi,\eta)|&=&\Big|\frac{\xi_{1}}{\lambda_{2}|\xi|}e^{\gamma(c|\xi+\eta|-c_{1}|\xi|-c_{2}|\eta|)}\frac{1}{\lambda_{2}}\varphi(\frac{\xi}{\lambda_{1}})\varphi'(\frac{\eta}{\lambda_{2}})\\
\nonumber&+&\gamma\big(\frac{c(\xi_{1}+\eta_{1})}{|\xi+\eta|}-\frac{c_{2}\eta_{1}}{|\eta|}\big)e^{\gamma(|\xi+\eta|-c_{1}|\xi|-c_{2}|\eta|)}\varphi(\frac{\xi}{\lambda_{1}})\varphi(\frac{\eta}{\lambda_{2}})\Big|\\
\nonumber&\lesssim&\frac{1}{\lambda_{2}}+\frac{\gamma\lambda_{2}}{\lambda_{2}}e^{-\gamma\lambda_{2}}\lesssim\frac{1}{|\eta|}.
\end{eqnarray*}
Thus, Fa$\grave{a}$ di Bruno formula leads us to for $|\beta_{1}|,|\beta_{2}|\geq0$
$$\Big|\partial^{\beta_{1}}_{\xi}\partial^{\beta_{2}}_{\eta}m(\xi,\eta)\Big|\leq C_{\beta}|\xi|^{-|\beta_{1}|}|\eta|^{-|\beta_{2}|}.$$
Consequently, Lemma \ref{mutiplier} immediately yields Corollary \ref{Euclidean}.
\end{proof}

To link the Gevrey multiplier with different symbols, we would also like to introduce the following Lemma revealing the equivalence of Gevrey multiplier equipped with $l^1$ norm and Euclidean norm.
\begin{lem}\label{G1-G2}
Setting $1\leq s, p, r \leq\infty$. For  any $f\in \mathcal{S}'$, one could find positive constants $c_{1}<1, c_{2}>1$ depending on $d$ such that
\begin{eqnarray}
C_{1}\|e^{c_{1}\alpha\Lambda_{1}}f\|_{{\dot B}^{s}_{p,r}}\leq\|e^{\alpha\Lambda}f\|_{{\dot B}^{s}_{p,r}}\leq C_{2}\|e^{c_{2}\alpha\Lambda_{1}}f\|_{\dot{B}^{s}_{p,r}}.
\end{eqnarray}
where support of $\hat{f}(\xi)$ is in $\lambda\mathcal{C}$. $C_{1}, C_{2}$ are independent of $\lambda, \alpha$.
\end{lem}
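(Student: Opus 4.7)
The plan is to reduce both inequalities to a uniform Fourier-multiplier bound on a single spectral block. Since $\hat f$ is supported in one dyadic annulus $\lambda\mathcal{C}$, the homogeneous Besov norm $\|f\|_{\dot B^s_{p,r}}$ is comparable to $\lambda^s\|f\|_{L^p}$, and the same equivalence holds for the Gevrey-multiplied functions (which preserve the Fourier support). Thus both inequalities of the lemma follow once we prove, for every $p\in[1,\infty]$, the single-block estimates
\[
\|e^{c_1\alpha\Lambda_1}f\|_{L^p}\lesssim\|e^{\alpha\Lambda}f\|_{L^p},\qquad\|e^{\alpha\Lambda}f\|_{L^p}\lesssim\|e^{c_2\alpha\Lambda_1}f\|_{L^p},
\]
with constants independent of $\alpha,\lambda,f$. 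Fix a bump $\tilde\varphi\in C^\infty_c$ that is $\equiv 1$ on $\mathcal{C}$; writing $\hat f=\tilde\varphi(\cdot/\lambda)\hat f$, each estimate reduces to uniform $L^p$-boundedness of the Fourier multiplier with symbol
\[
m_1(\xi)=e^{\alpha(c_1|\xi|_1-|\xi|)}\tilde\varphi(\xi/\lambda),\qquad m_2(\xi)=e^{\alpha(|\xi|-c_2|\xi|_1)}\tilde\varphi(\xi/\lambda),
\]
and by Young's convolution inequality it is enough to prove $\|\mathcal{F}^{-1}m_i\|_{L^1}\le C$ uniformly in $\alpha,\lambda$.

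After rescaling $\xi=\lambda\eta$ and setting $\beta=\alpha\lambda$, both cases reduce to the same uniform estimate $\sup_{\beta\ge 0}\|\mathcal{F}^{-1}\Phi_\beta\|_{L^1}<\infty$, where $\Phi_\beta(\eta)=e^{\beta E(\eta)}\tilde\varphi(\eta)$ and $E\in\{c_1|\eta|_1-|\eta|,\;|\eta|-c_2|\eta|_1\}$. Because $|\eta|\le|\eta|_1\le\sqrt d\,|\eta|$ on $\mathbb{R}^d\setminus\{0\}$, choosing $c_1<1/\sqrt d$ and $c_2>1$ guarantees $E(\eta)\le -\delta|\eta|\le-\delta a<0$ uniformly on $\mathrm{supp}\,\tilde\varphi$ (where $a>0$ is the inner radius of the enlarged annulus), so $\Phi_\beta$ is compactly supported, continuous, and dominated pointwise by $e^{-\beta\delta a}$.

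The hard part is that $|\eta|_1=\sum_j|\eta_j|$ is only Lipschitz: the distributional identity $\partial_j^2|\eta|_1=2\delta(\eta_j)$ prevents $\Phi_\beta$ from belonging to $H^{d/2+\varepsilon}$ when $d\ge 3$, so the standard route $\|\mathcal{F}^{-1}g\|_{L^1}\lesssim\|g\|_{H^{d/2+}}$ and the Mikhlin--H\"ormander theorem both break down at the endpoints $p=1,\infty$. The observation that rescues the estimate is that $\partial_j|\eta|_1=\sgn(\eta_j)$ depends only on $\eta_j$, so any \emph{mixed} partial $\prod_{j\in S}\partial_{\eta_j}\Phi_\beta$ in which each index appears at most once is still a genuine bounded function, and a direct Leibniz computation gives
\[
\Bigl\|\prod_{j\in S}\partial_{\eta_j}\Phi_\beta\Bigr\|_{L^\infty(\mathrm{supp}\,\tilde\varphi)}\le C(1+\beta)^{|S|}e^{-\beta\delta a}\qquad\forall\,S\subset\{1,\dots,d\}.
\]
Cauchy--Schwarz against the anisotropic weight $\prod_j(1+y_j^2)^{-1/2}$ (which is in $L^2$ with norm $\pi^{d/2}$) together with Plancherel then yields
\[
\|\mathcal{F}^{-1}\Phi_\beta\|_{L^1}\le\pi^{d/2}\Bigl(\sum_{S\subset\{1,\dots,d\}}\bigl\|\prod_{j\in S}\partial_{\eta_j}\Phi_\beta\bigr\|_{L^2}^2\Bigr)^{1/2}\le C(1+\beta)^d e^{-\beta\delta a},
\]
which is bounded uniformly in $\beta\ge 0$.

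Feeding this uniform $L^1$ bound back into Young's inequality gives the single-block $L^p$ estimates, and restoring the $\lambda^s$ scaling closes the two Besov inequalities with constants depending only on $d,s,p,r,c_1,c_2$. The decisive step is the mixed-derivative identity in the previous paragraph: it is precisely what bypasses the coordinate-hyperplane singularity of $|\eta|_1$ and makes the estimate valid at $p=1$ and $p=\infty$, the endpoint range needed to run the $L^1$ Gevrey analysis elsewhere in the paper.
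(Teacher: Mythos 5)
Your proof is correct and the reduction strategy (single dyadic block, Young's inequality, uniform $L^1$ bound on the kernel, rescaling to a single parameter $\beta=\alpha\lambda$) is essentially the same as the paper's. But you handle the key technical obstruction---the failure of $|\eta|_1$ to be $C^2$---by a genuinely different and cleaner mechanism, and you prove both inequalities whereas the paper only works out one.

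The paper attacks the kernel estimate by applying $(1-\partial_{\xi_1}^2)$ inside the Fourier integral, thereby directly confronting the distributional second derivative $\partial_{\xi_1}^2|\xi|_1=2\delta(\xi_1)$. It then introduces a partition of unity $\{\varphi_k\}$ adapted to the coordinate hyperplanes and evaluates the delta contribution term by term; repeating ``similarly in $x_i$'' is left implicit, and getting the full weight $\prod_i(1+x_i^2)$ this way in fact requires handling products of delta factors, which is not spelled out. Your approach sidesteps the singularity entirely: you never differentiate twice in the same coordinate. The observation that $\partial_j|\eta|_1=\operatorname{sgn}(\eta_j)$ depends only on $\eta_j$, so that any mixed derivative $\prod_{j\in S}\partial_{\eta_j}\Phi_\beta$ with each index appearing at most once is a genuine bounded function, lets you use the anisotropic weight $\prod_j(1+y_j^2)^{1/2}$ rather than the isotropic $(1+|y|^2)^{d/4+\epsilon}$ that would require $H^{d/2+}$ regularity of the symbol (which fails for $d\ge 3$). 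Cauchy--Schwarz and Plancherel then close the argument with the uniform bound $(1+\beta)^d e^{-\beta\delta a}$. This is a tidier route to the same conclusion; the paper buys nothing over it except avoiding the (elementary) anisotropic-weight trick, while your version makes the higher-dimensional case and the $\beta$-uniformity transparent.

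One small remark: your reduction to a single annulus block and the comparison $\|f\|_{\dot B^s_{p,r}}\sim\lambda^s\|f\|_{L^p}$ is fine precisely because the hypothesis places $\operatorname{supp}\hat f$ in a single dyadic shell; that is exactly how the lemma is invoked in the rest of the paper (inside Littlewood--Paley blocks), so nothing is lost.
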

\begin{proof}
We only prove the right side while the other case follows the similar steps. We write
$$e^{\alpha\Lambda}f=e^{\alpha\Lambda-c_{2}\alpha\Lambda_{1}}e^{c_{2}\alpha\Lambda_{1}}f,$$
by definition of Besov norm and technique of scaling, it is enough to prove the following inequality holds for some $c_{2}>0$:
 \begin{eqnarray}
\|K(x)\|_{L^1}\leq C.
\end{eqnarray}
with $\hat{K}(\xi)\triangleq e^{\alpha|\xi|-c_{2}\alpha|\xi|_{1}}\varphi(\xi)$.
We first have that
 \begin{eqnarray*}
\partial_{\xi_1}\hat{K}(\xi)&=&\Big[\big(\frac{\alpha\xi_{1}}{|\xi|}-c_{2}\alpha \mathrm{sgn}(\xi_{1})\big)\varphi(\xi)
+\varphi'(\xi)\Big]e^{\alpha|\xi|-c_{2}\alpha|\xi|_{1}},
\end{eqnarray*}

 \begin{eqnarray*}
\partial^{2}_{\xi_1}\hat{K}(\xi)&=&\Big[\big(\frac{\alpha}{|\xi|}-\frac{\alpha|\xi|-\alpha\xi^{2}_{1}}{|\xi|^{3}}-c_{2}\alpha \delta(\xi_{1})\big)\varphi(\xi)+\big(\frac{\alpha\xi_{1}}{|\xi|}-c_{2}\alpha \mathrm{sgn}(\xi_{1})\big)^{2}\varphi(\xi)\\
\nonumber&+&\big(\frac{\alpha\xi_{1}}{|\xi|}-c_{2}\alpha \mathrm{sgn}(\xi_{1})\big)\varphi'(\xi)+\varphi''(\xi)\Big]e^{\alpha|\xi|-c_{2}\alpha|\xi|_{1}}\\
\nonumber&=&\big(M(\xi)-c_{2}\alpha \delta(\xi_{1})\varphi(\xi)\big)e^{\alpha|\xi|-c_{2}\alpha|\xi|_{1}}
\end{eqnarray*}
where $\delta(x)$ is the delta function with $\langle \delta,f\rangle=f(0)$.

Utilizing that $e^{ix\cdot\xi}=\frac{1}{ix_{i}}\partial_{\xi_{i}}e^{ix\cdot\xi}$ and integral by parts, we observe that
 \begin{eqnarray*}
(1+x^2_{1})K(x)&=&\int_{\mathbb{R}^d}e^{ix\cdot\xi}(1+\partial^{2}_{\xi_{i}})\big(e^{\alpha|\xi|-c_{2}\alpha|\xi|_{1}}\varphi(\xi)\big)d\xi\\
\nonumber&=&\int_{\mathbb{R}^d}e^{ix\cdot\xi}(M(\xi)+\hat{K}(\xi))d\xi+c_{2}\alpha\langle \delta(\xi_{1}),e^{ix\cdot\xi}e^{\alpha|\xi|-c_{2}\alpha|\xi|_{1}}\varphi(\xi)\rangle.
\end{eqnarray*}

Now we introduce a family of functions $\varphi_{k}(\xi)$($1\leq k\leq d$) satisfying

$(i).\sum^{d}_{k=1}\varphi_{k}(\xi)\equiv 1 \,\,on\,\, \mathrm{supp} \varphi(\xi),$

$(ii).\mathrm{supp} \varphi_{k}(\xi)\subset \big\{\mathcal{C}(0,\frac{3}{4},\frac{8}{3}), |\xi_{k}|\geq c\big\}.$

Then it is clear that when $k=1$
 \begin{eqnarray}\label{CR7}c_{2}\alpha \langle\delta(\xi_{1}),e^{ix\cdot\xi}e^{\alpha|\xi|-c_{2}\alpha|\xi|_{1}}\varphi_{1}(\xi)\rangle
=c_{2}\alpha e^{ix\cdot\xi}e^{\alpha|\xi|-c_{2}\alpha|\xi|_{1}}\varphi_{1}(\xi)\big|_{\xi_{1}=0}=0.\end{eqnarray}
Also we have for $k\neq1$ that
 \begin{multline}\label{CR8}
c_{2}\alpha \langle\delta(\xi_{1}),e^{ix\cdot\xi}e^{\alpha|\xi|-c_{2}\alpha|\xi_{1}|}\varphi_{k}(\xi)\rangle=c_{2}\alpha e^{ix\cdot\xi}e^{\alpha|\xi|-c_{2}\alpha|\xi|_{1}}\varphi_{k}(\xi)\big|_{\xi_{1}=0}\\
=c_{2}\alpha e^{i\bar{x}\cdot\bar{\xi}}e^{\alpha|\bar{\xi}|-c_{2}\alpha|\bar{\xi}|_{1}}\varphi_{k}(\bar{\xi})
 \end{multline}
with $\bar{\xi}=(0,\xi_{2},...,\xi_{d})$.

By taking a proper $c_{2}$ large enough such that $(\alpha |\xi|+\alpha^{2} |\xi|^{2}) e^{\alpha|\xi|-c_{2}\alpha|\xi|_{1}}\leq e^{-\alpha|\xi|}$, we immediately have for the former term
 \begin{eqnarray}
\Big|\int_{\mathbb{R}^d}e^{ix\cdot\xi}(M(\xi)+\hat{K}(\xi))d\xi\Big|\leq C.
\end{eqnarray}
For the latter term, inspired by (\ref{CR7})-(\ref{CR8}) and the fact that $e^{\alpha|\bar{\xi}|-c_{2}\alpha|\bar{\xi}|_{1}}\leq e^{-\alpha|\bar{\xi}|}$ for some $c_{2}$, we have
 \begin{eqnarray*}
\Big|c_{2}\alpha\langle \delta(\xi_{1}),e^{ix\cdot\xi}e^{\alpha|\xi|-c_{2}\alpha|\xi|_{1}}\varphi(\xi)\rangle\Big|\leq c_{2}\alpha e^{-\alpha|\bar{\xi}|}\varphi_{k}(\bar{\xi})\leq C.
\end{eqnarray*}
which implicates
$$\big|(1+x^2_{1})K(x)\big|\leq\int_{\xi\in \mathcal{C}}(1+\frac{1}{|\xi|^2})d\xi\leq C.$$
Similar calculations to $x_{i}$ for $i=2,...,d$ lead us to
$$\prod^{d}_{i=1}\big|(1+x^2_{i})K(x)\big|\leq C$$
and proof of Lemma \ref{G1-G2} is finished.

\end{proof}

\subsection{Gevrey analyticity under critical regularity}
\hspace*{\fill}

In this subsection, we give Gevrey analyticity in the critical space. The method for case $1<p<\infty$ is standardly derived from \cite{BBT} while we state $p=1$ is more problematic especially facing those nonlinear Gevrey estimates. Our strategy is to apply the Coifman-Meyer theory established above. The change of radius of analyticity by constants would ensure the uniform Gevrey estimates even in endpoint Lebesgue space, For clarity, we write $(U,W)\triangleq (e^{\sqrt{t}\Lambda_{1}}u,e^{\sqrt{t}\Lambda_{1}}w)$ and further $(F,G)\triangleq (e^{\sqrt{t}\Lambda_{1}}f,e^{\sqrt{t}\Lambda_{1}}g)$.

Before we give the main goal in this section, we would like to present the following Lemma reveals the real analyticity of heat equations with general external force $f$, which reads as
\begin{eqnarray}\label{toy1.1}\left\{
  \begin{array}{ll}
    \partial_{t}v-\Delta v=f \,\,\,\,\,\,\,\,\mathrm{in}\,\,\,\,(0,T)\times \mathbb{R}^d, \\
    v|_{t=0}=v_{0}(x)\,\,\,\,\,\,\,\,\,\,\mathrm{on}\,\,\,\,\, \mathbb{R}^d.
  \end{array}
\right.\end{eqnarray}

Now we give Gevrey estimates for (\ref{toy1.1}):
\begin{lem}\label{lem4.21}
Let $s\in \mathbb{R}, 1\leq p\leq\infty $ and $1\leq \rho_2, r\leq \infty.$ Let $v$ be the solution of heat equation (\ref{toy1.1}). Then, we have for any $T>0$
\begin{eqnarray}\label{mmy}
\|e^{\sqrt{t}\Lambda_{1}}v\|_{\tilde{L}^{\rho_1}_{T}(\dot{B}^{s+\frac{2}{\rho_1}}_{p,r})}
\lesssim C\big(\|v_{0}\|_{\dot{B}^{s}_{p,r}}+\|e^{\sqrt{t}\Lambda_{1}}f\|_{\tilde{L}^{\rho_2}_{T}(\dot{B}^{s-2+\frac{2}{\rho_2}}_{p,r})}\big)
\end{eqnarray}
for all $\rho_1\in[\rho_2,\infty]$.
\end{lem}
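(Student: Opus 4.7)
The strategy is to reduce (\ref{mmy}) to the classical parabolic maximal regularity estimate in the Chemin-Lerner spaces $\tilde L^{\rho}_T(\dot B^s_{p,r})$ by extracting, uniformly in the Littlewood-Paley block, a Fourier multiplier that absorbs the Gevrey weight $e^{\sqrt{t}\Lambda_{1}}$ into the heat semigroup. First I would write Duhamel's formula $v(t)=e^{t\Delta}v_{0}+\int_{0}^{t}e^{(t-s)\Delta}f(s)\,ds$, apply $\dot\Delta_{j}$ and $e^{\sqrt{t}\Lambda_{1}}$, and split
$$e^{\sqrt{t}\Lambda_{1}+(t-s)\Delta}f(s)=e^{(\sqrt{t}-\sqrt{s})\Lambda_{1}+(t-s)\Delta}\bigl(e^{\sqrt{s}\Lambda_{1}}f(s)\bigr),$$
so that the ``living'' Gevrey weight in the integrand becomes $e^{\sqrt{s}\Lambda_{1}}f(s)=F(s)$, which matches the norm on the right-hand side.

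The key point is then a pointwise multiplier estimate. Using $\sqrt{t}-\sqrt{s}\le\sqrt{t-s}$ (valid for $0\le s\le t$) and completing the square coordinate by coordinate,
$$\sqrt{t-s}\,|\xi_{i}|-\tfrac{t-s}{2}\,\xi_{i}^{2}\le\tfrac{1}{2},\qquad i=1,\dots,d,$$
we obtain
$$(\sqrt{t}-\sqrt{s})|\xi|_{1}-(t-s)|\xi|^{2}\le -\tfrac{t-s}{2}|\xi|^{2}+\tfrac{d}{2},$$
so the kernel of $e^{(\sqrt{t}-\sqrt{s})\Lambda_{1}+(t-s)\Delta}$ is dominated by a (shifted) heat kernel with uniformly bounded $L^{1}$ norm. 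Combined with the Littlewood-Paley localization, this yields the decisive bound
$$\|\dot\Delta_{j}e^{\sqrt{t}\Lambda_{1}+(t-s)\Delta}f(s)\|_{L^{p}}\lesssim e^{-c(t-s)2^{2j}}\,\|\dot\Delta_{j}F(s)\|_{L^{p}},$$
and similarly (taking $s=0$) $\|\dot\Delta_{j}e^{\sqrt{t}\Lambda_{1}+t\Delta}v_{0}\|_{L^{p}}\lesssim e^{-ct\,2^{2j}}\|\dot\Delta_{j}v_{0}\|_{L^{p}}$.

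Assembling these pointwise-in-time estimates,
$$\|\dot\Delta_{j}e^{\sqrt{t}\Lambda_{1}}v(t)\|_{L^{p}}\lesssim e^{-ct2^{2j}}\|\dot\Delta_{j}v_{0}\|_{L^{p}}+\int_{0}^{t}e^{-c(t-s)2^{2j}}\|\dot\Delta_{j}F(s)\|_{L^{p}}\,ds,$$
I would take the $L^{\rho_{1}}_{T}$ norm in $t$, multiply by $2^{j(s+2/\rho_{1})}$, apply Young's convolution inequality in time (allowed since $\rho_{1}\ge\rho_{2}$) to the second term, and finally take $\ell^{r}_{j}$, which delivers exactly (\ref{mmy}).

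The only non-routine ingredient is the multiplier estimate in the Duhamel term, and the main obstacle is ensuring that the combination $(\sqrt{t}-\sqrt{s})\Lambda_{1}+(t-s)\Delta$ retains uniform $L^{p}$ boundedness; the $\sqrt{t}-\sqrt{s}\le\sqrt{t-s}$ trick together with the componentwise square-completion resolves exactly this difficulty, and once it is in place the argument reduces to the standard heat maximal regularity proof in tilde Besov spaces.
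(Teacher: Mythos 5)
Your plan is essentially the standard argument that the paper itself defers to (it cites \cite{BBT} for this lemma and then, in the proof of Lemma \ref{lem4.0}, quotes exactly the two kernel facts your scheme produces). The split $e^{\sqrt{t}\Lambda_{1}+(t-s)\Delta}=e^{(\sqrt{t}-\sqrt{s})\Lambda_{1}+(t-s)\Delta}\,e^{\sqrt{s}\Lambda_{1}}$, the elementary inequality $\sqrt{t}-\sqrt{s}\le\sqrt{t-s}$, the componentwise completion of the square, and the final Young-in-time step (using $\rho_{1}\ge\rho_{2}$) are all the right moving parts and lead to (\ref{mmy}).

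The one place you should tighten is the sentence ``so the kernel of $e^{(\sqrt{t}-\sqrt{s})\Lambda_{1}+(t-s)\Delta}$ is dominated by a (shifted) heat kernel.'' A pointwise bound on the \emph{symbol}, namely $e^{(\sqrt{t}-\sqrt{s})|\xi|_{1}-(t-s)|\xi|^{2}}\le e^{d/2}e^{-\tfrac{t-s}{2}|\xi|^{2}}$, does \emph{not} by itself imply pointwise or $L^{1}$ dominance of the corresponding kernels, so the convolution bound you want is not immediate from this observation. The clean repair, which is what the paper and \cite{BBT} actually do, is to factor the multiplier as
\begin{equation*}
e^{(\sqrt{t}-\sqrt{s})\Lambda_{1}+(t-s)\Delta}
= e^{(\sqrt{t}-\sqrt{s}-\sqrt{t-s})\Lambda_{1}}\;e^{\sqrt{t-s}\Lambda_{1}+(t-s)\Delta}.
\end{equation*}
The first factor has a nonpositive exponent in each coordinate (by $\sqrt{t}-\sqrt{s}\le\sqrt{t-s}$) and is a tensor product of one-dimensional Poisson-type multipliers, hence uniformly $L^{p}\to L^{p}$ bounded for every $p\in[1,\infty]$. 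For the second factor, the componentwise square-completion you wrote down, together with the frequency localization and a Bernstein/kernel lemma of the type of Lemma \ref{lem5.1}, gives $\|\dot\Delta_{j}e^{\sqrt{t-s}\Lambda_{1}+(t-s)\Delta}g\|_{L^{p}}\lesssim e^{-c(t-s)2^{2j}}\|\dot\Delta_{j}g\|_{L^{p}}$. With this modular replacement your outline is complete and coincides with the BBT proof the paper refers to.
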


 The proof of above Lemma has been given in \cite{BBT} where we just omit those details. We state our main goal in this subsection which is presented in the following Lemma:
\begin{lem}\label{lem4.0}
Assume $p,q\in[1,\infty]$. Let $p$ satisfies the condition in Theorem \ref{thm1} and $(u,\omega)$ be the solution to \eqref{1.1} constructed by Theorem \ref{thm1}. Then there exists a small constant $\eta$ that for any data $(u_{0},\omega_{0})$ satisfying
$\mathcal{X}_{0,p}\leq\eta$, the solution $(u,\omega)\in E^p$ fulfills $e^{\sqrt t\Lambda_{1}}(u,\omega)\in E^p$.
\end{lem}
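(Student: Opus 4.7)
The plan is to mirror the linear/nonlinear bookkeeping of Section 3, but applied to the Gevrey-lifted unknowns $(U,W)\triangleq(e^{\sqrt t\Lambda_{1}}u, e^{\sqrt t\Lambda_{1}}\omega)$, using Lemma \ref{lem4.21} in place of the standard heat smoothing. First I would observe that the Gevrey multiplier $e^{\sqrt t \Lambda_{1}}$ commutes with every Fourier multiplier that appears in (3.1), (3.2), (3.8), in particular with $\Delta$, $\nabla\times$, $\nabla\,\mathrm{div}$ and with the Leray projections $\mathcal{P},\mathcal{Q}$, so the effective-velocity variable $R=\nabla\times\mathcal{P}\omega+\tfrac12\Delta u$ lifts to $e^{\sqrt t\Lambda_{1}} R$ and satisfies exactly the same decoupled damping/heat system. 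Applying Lemma \ref{lem4.21} at the level of the Lamé part, the damping block for $\mathcal{Q}\omega$ and the decoupled $(u,R)$ system, then reversing the effective-velocity step as in Section 3, yields the linear estimate
\begin{equation*}
\|(U,W)\|_{E^p_T}\lesssim \mathcal{X}_{0,p}+\|(F,G)\|^{h}_{\tilde L^{1}_{T}(\dot B^{\frac{3}{p}-1}_{p,q})}+\|(F,\Lambda G)\|^{\ell}_{\tilde L^{1}_{T}(\dot B^{\frac{3}{p}-1}_{p,q})},
\end{equation*}
where $F=e^{\sqrt t\Lambda_{1}}(\mathbf{P}[u\cdot\nabla u])$ and $G=e^{\sqrt t\Lambda_{1}}(u\cdot\nabla\omega)$.

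The substantive work is bounding these Gevrey nonlinearities by $\|(U,W)\|_{E^p_T}^{2}$. For $1<p<\infty$ I would follow \cite{BBT}: after a Bony decomposition of $fg$, each paraproduct/remainder piece is recast as a bilinear multiplier with symbol $m(\xi,\eta)=e^{\sqrt t(|\xi+\eta|_{1}-|\xi|_{1}-|\eta|_{1})}$, which factorizes as a finite linear combination of identity, Hilbert transform and Poisson-type operators that are bounded on $L^p$ for $1<p<\infty$. The Besov counting is then identical to the one in Section 3 with $(u,\omega)$ replaced by $(U,W)$, so one obtains $\|(F,G)\|^{h}_{\cdots}+\|(F,\Lambda G)\|^{\ell}_{\cdots}\lesssim\|(U,W)\|_{E^p_T}^{2}$. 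This closes the estimate in the reflexive range.

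The main obstacle, which is precisely what Section 4.1 was designed for, is the endpoint $p=1$: the $\ell^{1}$ symbol $m$ fails the Coifman-Meyer hypothesis because the Hilbert transform is not $L^{1}$-bounded. Here I would pass to the Euclidean-norm multiplier $e^{\sqrt t(c|\xi+\eta|-c_{1}|\xi|-c_{2}|\eta|)}$ on dyadic frequency blocks: Corollary \ref{Euclidean} produces a bilinear bound uniform in the dyadic parameters, provided one picks the constants $c_{1},c_{2}$ so that the higher-frequency Gevrey weight on the right-hand side absorbs the output weight. This forces a change of the radius of analyticity by a constant factor, so rather than estimating $\|e^{\sqrt t\Lambda_{1}}(u,\omega)\|_{E^1_T}$ in terms of itself alone, I would couple it with $\|e^{d\sqrt t\Lambda_{1}}(u,\omega)\|_{E^p_T}$ for some fixed $d>1$ (with $p$ in the reflexive range already handled above), the latter providing the high-frequency input on the right. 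Lemma \ref{G1-G2} is exactly what translates between the $\ell^{1}$ and Euclidean Gevrey norms on dyadic blocks, at the price of adjusting these constants; combined with Bony decomposition, this yields the combined nonlinear inequality
\begin{equation*}
\|e^{\sqrt t\Lambda_{1}}(u,\omega)\|_{E^1_T}+\|e^{d\sqrt t\Lambda_{1}}(u,\omega)\|_{E^p_T}\lesssim \mathcal{X}_{0,1}+\bigl(\|e^{\sqrt t\Lambda_{1}}(u,\omega)\|_{E^1_T}+\|e^{d\sqrt t\Lambda_{1}}(u,\omega)\|_{E^p_T}\bigr)^{2}
\end{equation*}
already previewed in the introduction.

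To finish, I would set $\Phi(T)\triangleq\|(U,W)\|_{E^p_T}$ (or the augmented quantity above when $p=1$). By Theorem \ref{thm1} and standard continuity in $T$, $\Phi$ is continuous with $\Phi(0)\lesssim\mathcal{X}_{0,p}$, and the inequality $\Phi(T)\leq C(\mathcal{X}_{0,p}+\Phi(T)^{2})$ together with the smallness $\mathcal{X}_{0,p}\leq\eta\ll1$ traps $\Phi(T)\lesssim\mathcal{X}_{0,p}$ for all $T>0$ by a bootstrap argument, proving $e^{\sqrt t\Lambda_{1}}(u,\omega)\in E^p$. I expect the only genuinely delicate step to be verifying that the Euclidean-norm multiplier at $p=1$ yields the \emph{correct} Besov indices after one returns to the $\ell^{1}$ weight via Lemma \ref{G1-G2}; everything else is an adaptation of the well-posedness proof already carried out in Section 3.
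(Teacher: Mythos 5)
Your proposal mirrors the paper's proof essentially step for step: lift the effective-velocity decoupling through the Gevrey multiplier via Lemma~\ref{lem4.21} to get the linear bound, handle $1<p<6$ with the BBT-style $\ell^{1}$-symbol bilinear operators (Proposition~\ref{propA1}), and at $p=1$ switch to the Euclidean-norm symbol on dyadic blocks (Corollary~\ref{Euclidean}) together with Lemma~\ref{G1-G2}, coupling $\|(U,W)\|_{E^1_T}$ with $\|e^{d\sqrt t\Lambda_1}(u,\omega)\|_{E^p_T}$ to close the combined quadratic inequality. The only cosmetic difference is that the paper closes via a fixed-point construction (which sidesteps having to justify a priori continuity and finiteness of the Gevrey norm of the Kato solution) rather than a bootstrap on the given solution, but either formulation is standard and the substance is the same.
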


\begin{proof}
In fact, obtaining uniform Gevrey estimates in the critical space for $1<p<6$ follows similar analysis in \cite{S} by applying Gevrey analysis under a different regularity. Let us just give a rough calculations for $\mathcal{Q}\omega$. In light of the third equation of (\ref{3.2}), $\mathcal{Q}\omega$ satisfies
\begin{eqnarray}
\mathcal{Q}\omega=e^{\Delta t-2t}\mathcal{Q}\omega_{0}+\Delta_{j}\int^{t}_{0}e^{\Delta (t-s)-2(t-s)}\mathcal{Q}g(s)ds.
\end{eqnarray}
Applying Gevrey multiplier and Fourier localization on the above equation yields
\begin{eqnarray}
\dot{\Delta}_{j}e^{\sqrt{t}\Lambda_{1}}\mathcal{Q}\omega=\dot{\Delta}_{j}e^{\sqrt{t}\Lambda_{1}}e^{\Delta t-2t}\mathcal{Q}\omega_{0}+\Delta_{j}e^{\sqrt{t}\Lambda_{1}}\int^{t}_{0}e^{\Delta (t-s)-2(t-s)}\mathcal{Q}g(s)ds.
\end{eqnarray}
According to \cite{BBT}, the following kernel estimates hold for some positive $c$ such that
$$\|\dot{\Delta}_{j}e^{\sqrt{t}\Lambda_{1}}e^{\Delta t-2t}f\|_{L^p}\lesssim e^{-(c2^{2j}+2)t}\|\dot{\Delta}_{j}f\|_{L^p};$$
$$\|\dot{\Delta}_{j}e^{(-\sqrt{t-\tau}+\sqrt{t}-\sqrt{\tau})\Lambda_{1}}f\|_{L^p}\lesssim\|\dot{\Delta}_{j}f\|_{L^p},$$
then we soon have
\begin{eqnarray*}
\|\mathcal{Q}W\|^{\ell}_{\tilde L^{\infty}_{T}(\dot B^{\frac{3}{p}}_{p,q})\cap\tilde L^{1}_{T}(\dot B^{\frac{3}{p}}_{p,q})}\lesssim\|\mathcal{Q}\omega_{0}\|^{\ell}_{\dot B^{\frac{3}{p}}_{p,q}}+\|\mathcal{Q}G\|^{\ell}_{\tilde L^{1}_{T}(\dot B^{\frac{3}{p}}_{p,q})};
\end{eqnarray*}
\begin{eqnarray*}
\|\mathcal{Q}W\|^{h}_{\tilde L^{\infty}_{T}(\dot B^{\frac{3}{p}-1}_{p,q})\cap\tilde L^{1}_{T}(\dot B^{\frac{3}{p}-s+2}_{p,q})}\lesssim\|\mathcal{Q}\omega_{0}\|^{h}_{\dot B^{\frac{3}{p}-1}_{p,q}}+\|\mathcal{Q}G\|^{h}_{\tilde L^{1}_{T}(\dot B^{\frac{3}{p}-1}_{p,q})}.
\end{eqnarray*}
By applying effective velocity and repeating above calculations on other variables, we get Gevrey linear estimates. On the other hand, nonlinear estimates is similar as those in proof of Theorem \ref{thm1} by Proposition \ref{propA1} and we omit the details. Therefore it holds
\begin{eqnarray}\label{Gevrey p}
\|(U,W)\|_{E^p_{T}}\lesssim \mathcal{X}_{0,p}+\|(U,W)\|^{2}_{E^p_{T}}.
\end{eqnarray}
Then the global-in-time existence and uniqueness of Gevrey analytic solutions in $E^p_{T}$ is obtained by a fixed point argument.

Next we turn to case $p=1$. We first claim the following inequality:
\begin{eqnarray}\label{Gevrey kp}
\|e^{d\sqrt {t}\Lambda_{1}}(u,\omega)\|_{E^p_{T}}\lesssim \mathcal{X}_{0,p}+\|e^{d\sqrt {t}\Lambda_{1}}(u,\omega)\|^{2}_{E^p_{T}}
\end{eqnarray}
where $d>1$ is confirmed later. Actually, above inequality is obtained by a small modification of (\ref{Gevrey p}) by changing the radius of analyticity by constants and we omit the details. Now we turn to $L^1$ boundedness where the linear analysis faces no problems and we have
\begin{eqnarray}
\|(U,W)\|_{E^1_{T}}\lesssim \mathcal{X}_{0,1}+\|F\|^{h}_{\tilde L^{1}_{T}(\dot B^{2}_{1,q})}
+\|G\|^{h}_{\tilde L^{1}_{T}(\dot B^{2}_{1,q})}+\|(F,\Lambda G)\|^{\ell}_{\tilde L^{1}_{T}(\dot B^{2}_{1,q})}.
\end{eqnarray}
Now we deal with nonlinearities. We shall first concentrate on the convection term $F$. Based on Bony decomposition and the structure of divergence free field, we have
\begin{equation}
\label{decom}\mathbf{P}[\mathrm{div}(u\otimes u)]=\mathbf{P}\big[\mathrm{div}[T_{u_{m}} u_{n}]_{mn}\big]+\mathbf{P}\big[\mathrm{div}[R(u_{m}, u_{n})]_{mn}\big]+\mathbf{P}\big[\mathrm{div}[T_{u_{n}} u_{m}]_{mn}\big].
\end{equation}
We begin with estimates on paraproducts where
\begin{multline*}
\|e^{\sqrt{t}\Lambda_{1}} \mathbf{P}\big[\mathrm{div}[T_{u_{m}} u_{n}]_{mn}\big]\|_{\tilde{L}^1_{T}(\dot{B}^{2}_{1,q})}\lesssim \|e^{\sqrt{t}\Lambda_{1}} T_{u} u\|_{\tilde{L}^1_{T}(\dot{B}^{3}_{1,q})}
\lesssim \|e^{c\sqrt{t}\Lambda} T_{u} u\|_{\tilde{L}^1_{T}(\dot{B}^{3}_{1,q})},
\end{multline*}
where we apply Lemma \ref{G1-G2} in the last inequality with constant $c>1$. By the definition of the paraproduct and of $\mathcal{B}_{t}(f,g)$ and in light of $\dot{S}_{j-1}U=\sum\limits_{j'\leq j-2}\dot\Delta_{j'}U$, it holds that
\begin{equation}\label{estimlocT}
e^{c\sqrt{t}\Lambda} T_{u} u=\sum_{j\in \mathbb{Z}}W_{j}\,\,\, \mbox{with}\,\,\,
W_{j}\triangleq \sum\limits_{j'\leq j-2}\mathcal{B}_t(\dot\Delta_{j'}e^{c_{1}\sqrt{t}\Lambda}u,\dot\Delta_j e^{c_{2}\sqrt{t}\Lambda}u)
\end{equation}
with the symbol
\begin{equation}\label{sign}
m(\xi,\eta)=e^{\sqrt{t}(c|\xi+\eta|-c_{1}|\xi|-c_{2}|\eta|)}\varphi(\frac{\xi}{2^{j'}})\varphi(\frac{\eta}{2^{j}}),
\end{equation}
where $c,c_{1},c_{2}$ settled as in Corollary \ref{Euclidean}.
Consequently, since that $j'\leq j-2$, by  applying Corollary \ref{Euclidean}, it holds for any $p>1$ such that
\begin{eqnarray*}
\|W_{j}\|_{L^{1}_{T}L^1}&\lesssim&\sum_{j'\leq j-2} \|\dot{\Delta}_{j'}e^{c_{1}\sqrt{t}\Lambda}u\|_{L^{\infty}_{T}L^{p'}}\|\dot{\Delta}_{j}e^{c_{2}\sqrt{t}\Lambda}u\|_{L^{1}_{T}L^{p}}\\
&\lesssim& \sum_{j'\leq j-2}2^{(\frac{3}{p}-2)j'}2^{2j'}\|\dot{\Delta}_{j'}e^{c_{1}\sqrt{t}\Lambda}u\|_{L^{\infty}_{T}L^1}\|\dot{\Delta}_{j}e^{c_{2}\sqrt{t}\Lambda}u\|_{L^{1}_{T}L^p}.
\end{eqnarray*}
Now if we take $p$ close enough to $1$ such that $\frac{3}{p}-2>0$, by H$\ddot{o}$lder inequality for series, we get
\begin{eqnarray*}
\|e^{c\sqrt {t}\Lambda} T_{u}u\|_{\tilde{L}^1_{T}(\dot{B}^{3}_{1,q})}
&\lesssim&\sum_{j'\leq j-2}2^{(\frac{3}{p}-2)(j'-j)}\|e^{c_{1}\sqrt{t}\Lambda}u\|_{\tilde{L}^\infty_{T}(\dot{B}^{2}_{1,q})} \|e^{c_{2}\sqrt{t}\Lambda}u\|_{\tilde{L}^1_{T}(\dot{B}^{\frac{3}{p}+1}_{p,q})}\\&\lesssim&\|e^{c_{1}\sqrt{t}\Lambda}u\|_{\tilde{L}^\infty_{T}(\dot{B}^{2}_{1,q})} \|e^{c_{2}\sqrt{t}\Lambda}u\|_{\tilde{L}^1_{T}(\dot{B}^{\frac{3}{p}+1}_{p,q})}.
\end{eqnarray*}

Now we further set $c_{1}$ small enough and $d$ large enough. Then with help of Lemma \ref{G1-G2}, one could take proper $c_{1}$ small enough while $d$ large enough such that
\begin{eqnarray}\label{susuqiu}\|e^{c_{1}\sqrt{t}\Lambda}u\|_{\tilde{L}^\infty_{T}(\dot{B}^{2}_{1,q})}\lesssim
\|U\|_{\tilde{L}^\infty_{T}(\dot{B}^{2}_{1,q})};\end{eqnarray}
\begin{eqnarray}
\|e^{c_{2}\sqrt{t}\Lambda}u\|_{\tilde{L}^1_{T} (\dot{B}^{\frac{3}{p}+1}_{p,q})}\lesssim
\|e^{d\sqrt{t}\Lambda_{1}}u\|_{\tilde{L}^1_{T} (\dot{B}^{\frac{3}{p}+1}_{p,q})}.\end{eqnarray}
Consequently, by the embedding,  we conclude with
\begin{eqnarray}\label{nb1}\|e^{\sqrt{t}\Lambda_{1}}\mathbf{P}\big[\mathrm{div}[T_{u_{m}} u_{n}]_{mn}\big]\|_{\tilde{L}^1_{T}(\dot{B}^{-\sigma}_{1,\infty})}\lesssim\|U\|_{\tilde{L}^\infty_{T}(\dot{B}^{2}_{1,q})}\|e^{d\sqrt{t}\Lambda_{1}}u\|_{\tilde{L}^1_{T} (\dot{B}^{\frac{3}{p}+1}_{p,q})}.\end{eqnarray}
The other paraproduct term enjoys the same estimates as above.

{\bf Estimates for remainders.}
We turn to estimate remainders. Again, by Lemma \ref{G1-G2} and the fact $\mathrm{div} u=0$, we shall only deal with  $\|e^{c\sqrt{t}\Lambda} R(u,  u)\|_{\tilde{L}^1_{T}(\dot{B}^{3}_{1,q})}$.
By the spectrum cut-off, one has
\begin{eqnarray}\label{remainder}e^{c\sqrt{t}\Lambda} \dot{\Delta}_{j}R(u,u)=\sum_{j\leq j'+2}\dot{\Delta}_{j}\mathcal{B}_t(\tilde{\dot{\Delta}}_{j'}e^{c_{1}\sqrt{t}\Lambda}u,\dot{\Delta}_{j'}e^{c_{2}\sqrt{t}\Lambda}u),\end{eqnarray}
where
\begin{equation}\label{sign2}
m(\xi,\eta)=e^{\sqrt{t}(c|\xi+\eta|-c_{1}|\xi|-c_{2}|\eta|)}\tilde{\varphi}(\frac{\xi}{2^{j}})\varphi(\frac{\eta}{2^{j}}).
\end{equation}
Then it holds by Corollary \ref{Euclidean} that we could find $c_{2}$ large enough
\begin{eqnarray*}
&&\|\dot{\Delta}_{j}\mathcal{B}_t(\tilde{\dot{\Delta}}_{j'}e^{c_{1}\sqrt{t}\Lambda}u,\dot{\Delta}_{j'}e^{c_{2}\sqrt{t}\Lambda}u)\|_{L^1_{T} L^1}\\
&\lesssim&\sum_{j'\geq j-2}\|\dot{\Delta}_{j'}e^{c_{1}\sqrt{t}\Lambda}u\|_{L^\infty_{T}L^2}\|\dot{\Delta}_{j'}e^{c_{2}\sqrt{t}\Lambda}u\|_{L^1_{T}L^2}\\
&\lesssim&\sum_{j'\geq j-2}2^{-j'}2^{2j'}\|\dot{\Delta}_{j'}e^{c_{1}\sqrt{t}\Lambda}u\|_{L^\infty_{T}L^1}2^{(\frac{3}{p}+1)j'}\|\dot{\Delta}_{j'}e^{c_{2}\sqrt{t}\Lambda}u\|_{L^1_{T}L^p}.
\end{eqnarray*}
Then one can immediately have
\begin{eqnarray}\label{nb2}
\|e^{c\sqrt{t}\Lambda} R(u,u)\|_{\tilde{L}^1_{T}(\dot{B}^{3}_{1,q})}
&\lesssim&\|e^{c_{1}\sqrt{t}\Lambda}u\|_{\tilde{L}^\infty_{T}(\dot{B}^{2}_{1,q})}\|e^{c_{2}\sqrt{t}\Lambda}u\|_{\tilde{L}^1_{T} (\dot{B}^{\frac{3}{p}+1}_{p,q})}\\
&\nonumber\lesssim&\|U\|_{\tilde{L}^\infty_{T}(\dot{B}^{2}_{1,q})}\|e^{d\sqrt{t}\Lambda_{1}}u\|_{\tilde{L}^1_{T} (\dot{B}^{\frac{3}{p}+1}_{p,q})}.
\end{eqnarray}
Thus, combining (\ref{nb1}) with (\ref{nb2}), we obtain
\begin{eqnarray*}
\|F\|_{\tilde{L}^1_{T}(\dot{B}^{2}_{1,q})}
\lesssim \|U\|_{\tilde{L}^\infty_{T}(\dot{B}^{2}_{1,q})}\|e^{d\sqrt{t}\Lambda_{1}}u\|_{\tilde{L}^1_{T} (\dot{B}^{\frac{3}{p}+1}_{p,q})}.
\end{eqnarray*}

As for $G$, we impose very similar calculations as convection terms and it holds
\begin{eqnarray*}
\| G\|^{h}_{\tilde{L}^1_{T}(\dot{B}^{3-s}_{1,q})}+\|\Lambda G\|^{\ell}_{\tilde{L}^1_{T}(\dot{B}^{2}_{1,q})}
\lesssim \|(U,W)\|_{E^1_{T}}\|e^{d\sqrt {t}\Lambda_{1}}(u,\omega)\|_{E^p_{T}}.
\end{eqnarray*}
At the end, we deduct the following inequality:
\begin{eqnarray}\label{hok}
\|(U,W)\|_{E^1_{T}}\lesssim \mathcal{X}_{0,1}+\|(U,W)\|_{E^1_{T}}\|e^{d\sqrt {t}\Lambda_{1}}(u,\omega)\|_{E^p_{T}}.
\end{eqnarray}

In light of Cauchy inequality, plugging (\ref{Gevrey kp}) with (\ref{hok}) allows us to arrive at
\begin{multline}\label{uniform 5}
\|(U,W)\|_{E^1_{T}}+\|e^{d\sqrt {t}\Lambda_{1}}(u,\omega)\|_{E^p_{T}}
\lesssim\mathcal{X}_{0,1}+\big(\|(U,W)\|_{E^1_{T}}+\|e^{d\sqrt {t}\Lambda_{1}}(u,\omega)\|_{E^p_{T}}\big)^{2}.
\end{multline}
where Sobolev embedding implies $\mathcal{X}_{0,p}\lesssim\mathcal{X}_{0,1}$.

 Consequently, (\ref{uniform 5}) combining with fixed point argument allows us to obtain Gevrey analyticity in $E^1$ and we finish the proof of Lemma \ref{lem4.0}.

\end{proof}

\subsection{Evolution of Gevrey norm under $-\sigma$}
\hspace*{\fill}

In this subsection, we shall prove the following estimates for any $T\geq0$:
\begin{eqnarray*}\label{qqrm}
\sup_{t\in[0,T]}\|e^{\sqrt t\Lambda_{1}}(u,\Lambda\omega)\|^{\ell}_{\dot{B}^{-\sigma}_{p,\infty}}\leq C
\end{eqnarray*}
where $C$ is a positive constant depends on $\mathcal{D}_{0,p}$.

For clarify, we denote $G^p_{T}$ whose norm is defined as
\begin{eqnarray*}
\|(u,\omega)\|_{G^p_{T}}=\|u\|^{\ell}_{{\tilde L}^{\infty}_{T}({{\dot B}^{-\sigma}}_{p,\infty})\cap{\tilde L}^{1}_{T}({{\dot B}^{-\sigma+2}}_{p,\infty})}
+\|\omega\|^{\ell}_{{\tilde L}^{\infty}_{T}({{\dot B}^{-\sigma+1}}_{p,\infty})\cap{\tilde L}^{1}_{T}({{\dot B}^{-\sigma+1}}_{p,\infty})}.
\end{eqnarray*}

 Our main goal in this subsection is to prove the following Lemma for the corresponding global solution constructed in Theorem \ref{thm1}:
\begin{lem}\label{tol}
Setting $p\in[1,\infty]$  satisfies the condition in Theorem \ref{thm1}. Assume $\sigma$ fulfills
 $$1-\frac{3}{p}<\sigma<\min\{1+\frac{3}{p},1+\frac{3}{p'}\}.$$
 Then if $p>1$, it holds for the corresponding Kato's solution in any $T\in[0,\infty]$ such that
\begin{eqnarray}\label{sigma1}
\|(U, W)\|_{G^p_{T}}\leq C\Big(\mathcal{D}_{0,p}+\|(U,W)\|_{E^p_{T}}(\|(U,W)\|_{E^p_{T}}+\|(U,W)\|_{G^p_{T}})\Big),
\end{eqnarray}
where constant $C$ is dependent of $p$.

Moreover, if $p=1$, then the Kato's solution satisfies
\begin{eqnarray}\label{sigma2}
\,\,\|(U, W)\|_{G^1_{T}}\leq C\Big(\mathcal{D}_{0,1}+\|e^{D\sqrt{t}\Lambda_{1}}(u,\omega)\|_{E^1_{T}}(\|(U,W)\|_{E^1_{T}}+\|(U,W)\|_{G^1_{T}})\Big),
\end{eqnarray}
where the constant $D>1$ and $C$ is dependent of $D$.
\end{lem}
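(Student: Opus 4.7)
The plan is to mimic the linear analysis of Section 2 at the level of the low-frequency regularity $-\sigma$, now applied to the Gevrey-lifted variables $(U,W)=(e^{\sqrt{t}\Lambda_1}u,e^{\sqrt{t}\Lambda_1}\omega)$, and then bound the nonlinearities by combining the Gevrey product technology from Section 4.1 (extended Coifman--Meyer theory) with a careful use of the divergence-free structure. The starting point is the reformulated system \eqref{3.2} together with the effective velocity $R=\nabla\times\mathcal{P}\omega+\tfrac12\Delta u$ used in \eqref{3.8}. Applying $e^{\sqrt{t}\Lambda_1}\dot\Delta_j$ to each of the decoupled blocks and invoking the kernel bounds
\[
\|\dot\Delta_{j}e^{\sqrt{t}\Lambda_{1}}e^{\Delta t-ct}f\|_{L^p}\lesssim e^{-(c2^{2j}+c)t}\|\dot\Delta_{j}f\|_{L^p}
\]
recalled in the proof of Lemma \ref{lem4.0} (together with Lemma \ref{lem4.21}), I would obtain the linear Gevrey estimate
\[
\|(U,W)\|_{G^p_T}\lesssim \mathcal{D}_{0,p}+\|(F,\Lambda G)\|^{\ell}_{\widetilde{L}^1_T(\dot B^{-\sigma}_{p,\infty})},
\]
where $F=e^{\sqrt{t}\Lambda_1}\mathbf{P}(u\cdot\nabla u)$ and $G=e^{\sqrt{t}\Lambda_1}(u\cdot\nabla\omega)$; the high-frequency tails of $u,\omega$ entering the low-frequency block through $R$ and the Lamé smoothing are absorbed exactly as in Section 2 via the cut-off $k_0$.

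For the nonlinear estimate in the range $1<p<\infty$, the key observation is the divergence-free condition: rewriting $u\cdot\nabla u=\mathrm{div}(u\otimes u)$ and $u\cdot\nabla\omega=\mathrm{div}(u\otimes\omega)$ gains one derivative, which compensates the singularity produced by the negative regularity $-\sigma$ in the low-high remainder pieces. I would then perform Bony's decomposition on $u\otimes u$ and $u\otimes\omega$, distributing the Gevrey weight onto the two factors so that each piece becomes a bilinear operator with the classical $l_1$-Gevrey symbol
\[
m(\xi,\eta)=e^{\sqrt{t}(|\xi+\eta|_1-|\xi|_1-|\eta|_1)},
\]
which is $L^p$-bounded for $1<p<\infty$ by the Lemari\'e-Rieusset/Bae--Biswas--Tadmor theory already used in the proof of Lemma \ref{lem4.0}. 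In every Bony piece one factor is measured at its critical regularity in $E^p_T$ while the other sits at regularity $-\sigma$ or $-\sigma+1$ in $G^p_T$; the two-sided window $1-\frac{3}{p}<\sigma<\min\{1+\frac{3}{p},1+\frac{3}{p'}\}$ is precisely the range that keeps the high-low, low-high and high-high interactions summable in $\ell^\infty_j$ with this split, giving \eqref{sigma1}.

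The main obstacle is the endpoint $p=1$, where the $l_1$-Gevrey symbol above is not an $L^1$ multiplier (failure of the Hilbert transform, hence of the classical Coifman--Meyer theorem, in $L^1$). Here I would replace it by the Euclidean symbol with enlarged radius on one of the two factors, that is
\[
m(\xi,\eta)=e^{\sqrt{t}(c|\xi+\eta|-c_1|\xi|-c_2|\eta|)}\varphi(\xi/2^{j'})\varphi(\eta/2^{j}),
\]
with constants $c,c_1,c_2$ chosen as in Corollary \ref{Euclidean} so that the symbol satisfies the Coifman--Meyer condition on the support of the frequency localizers. Corollary \ref{Euclidean} then provides the required bilinear boundedness uniformly in $j,j'$, while Lemma \ref{G1-G2} lets us pass from the $l_1$-Gevrey to the Euclidean-Gevrey norm at the price of inflating the radius of analyticity by a constant $D>1$ on one factor. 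Consequently one factor can be measured in $G^1_T$ and the other, carrying the inflated weight, in $\|e^{D\sqrt{t}\Lambda_1}(u,\omega)\|_{E^1_T}$, producing \eqref{sigma2}. Since the latter quantity is already controlled (and small) by Lemma \ref{lem4.0}, the estimate \eqref{sigma2} will close without any smallness requirement on $\mathcal{D}_{0,1}$ itself.
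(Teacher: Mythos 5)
Your proposal follows essentially the same route as the paper: linear Gevrey estimates at regularity $-\sigma$ via the effective velocity and Lemma \ref{lem4.21}; for $1<p<\infty$, Bony decomposition combined with the divergence structure and the classical $l_1$-Gevrey product bounds (Proposition \ref{propA1}), with the two-sided window on $\sigma$ exactly matching the hypotheses needed for the paraproduct and remainder pieces; for $p=1$, replacement of the $l_1$ symbol by the localized Euclidean symbol, Corollary \ref{Euclidean} for bilinear boundedness uniformly in the dyadic indices, and Lemma \ref{G1-G2} to inflate the radius by a constant $D>1$ on the factor that lands in $E^1_T$. This matches the paper's proof in structure and in all the key technical ingredients.
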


\begin{proof}

{\it \underline{Case for $1<p<\infty$.}}

 Again we apply effective velocity and Lemma \ref{lem4.21} to get linear estimates where
\begin{eqnarray}\label{uun}
\|(U, W)\|_{G^p_{T}}\leq C\big(
\|(u_0,\Lambda \omega_0)\|^{\ell}_{\dot{B}^{-\sigma}_{p,\infty}}+\|(F,\Lambda G)\|^{\ell}_{\tilde L^{1}_{T}(\dot B^{-\sigma}_{p,\infty})}\big).
\end{eqnarray}
In order to deal with the nonlinear term, we claim that if $\sigma$ satisfies
 $$1-\frac{3}{p}<\sigma<\min\{1+\frac{3}{p},1+\frac{3}{p'}\},$$
 then the following inequality holds:
\begin{multline}\label{nonlinear1}
\|e^{\sqrt{t}\Lambda_{1}}(ab)\|_{\tilde{L}^{\rho}_{T}(\dot{B}^{1-\sigma}_{p,\infty})}\leq \|A\|_{\tilde{L}^{\rho_{1}}_{T}(\dot{B}^{1-\sigma}_{p,\infty})}\|B\|_{\tilde{L}^{\rho_{2}}_{T}(\dot{B}^{\frac{3}{p}}_{p,\infty})}
+\|B\|_{\tilde{L}^{\bar{\rho}_{1}}_{T}(\dot{B}^{\frac{3}{p}}_{p,\infty})}\|A\|_{\tilde{L}^{\bar{\rho}_{2}}_{T}(\dot{B}^{1-\sigma}_{p,\infty})}
\end{multline}
where $\frac{1}{\rho}=\frac{1}{\rho_{1}}+\frac{1}{\rho_{2}}=\frac{1}{\bar{\rho}_{1}}+\frac{1}{\bar{\rho}_{2}}$.

In fact, (\ref{nonlinear1}) is the direct result of Lemma \ref{propA1}. Since $f=\mathrm{div}(u\otimes u)$, (\ref{nonlinear1}) yields
\begin{eqnarray*}
\|F\|^{\ell}_{\tilde{L}^{1}_{T}(\dot{B}^{-\sigma}_{p,\infty})}\leq \|e^{\sqrt{t}\Lambda_{1}}(u\otimes u)\|_{\tilde L^{\infty}_{T}(\dot B^{-\sigma+1}_{p,\infty})}\lesssim
\| U\|_{\tilde L^{\infty}_{T}(\dot B^{-\sigma}_{p,\infty})}\|U\|_{\tilde L^{1}_{T}(\dot B^{\frac{3}{p}+1}_{p,\infty})}.
\end{eqnarray*}
Since that $1-\frac{3}{p}<\sigma$, there holds
\begin{eqnarray*}
\|U\|_{\tilde L^{\infty}_{T}(\dot B^{-\sigma}_{p,\infty})}\leq \|U\|^{\ell}_{\tilde L^{\infty}_{T}(\dot B^{-\sigma}_{p,\infty})}+\|U\|^{h}_{\tilde L^{\infty}_{T}(\dot B^{\frac{3}{p}-1}_{p,q})}
&\leq&\|U\|_{E^p_{T}}+\|U\|_{G^p_{T}}.
\end{eqnarray*}
Hence we arrive at
\begin{eqnarray}\label{koko2}
\|F\|^{\ell}_{\tilde{L}^{1}_{T}(\dot{B}^{-\sigma}_{p,\infty})}\lesssim\|U\|_{E^p_{T}}(\|U\|_{E^p_{T}}+\|U\|_{G^p_{T}}).
\end{eqnarray}

As for $G$, based on Bony decomposition, there holds
\begin{eqnarray*}
\|\Lambda e^{\sqrt{t}\Lambda_{1}}T_{u}\nabla\omega\|^{\ell}_{\tilde{L}^{1}_{T}(\dot{B}^{-\sigma}_{p,\infty})}
\leq\|e^{\sqrt{t}\Lambda_{1}}T_{u}\nabla\omega\|^{\ell}_{\tilde{L}^{1}_{T}(\dot{B}^{-\sigma-1}_{p,\infty})}\leq \|U\|_{\tilde L^{\infty}_{T}(\dot B^{-\sigma}_{p,\infty})}\|\nabla W\|_{\tilde L^{1}_{T}(\dot B^{\frac{3}{p}-1}_{p,\infty})}
\end{eqnarray*}
provided $1-\frac{3}{p}<\sigma$, while for the remainder, if $\sigma<\min\{1+\frac{3}{p},1+\frac{3}{p'}\}$, then:
\begin{eqnarray*}
\|\Lambda e^{\sqrt{t}\Lambda_{1}}R(u,\nabla\omega)\|^{\ell}_{\tilde{L}^{1}_{T}(\dot{B}^{-\sigma}_{p,\infty})}
\leq \|U\|_{\tilde L^{2}_{T}(\dot B^{-\sigma+1}_{p,\infty})}\| W\|_{\tilde L^{2}_{T}(\dot B^{\frac{3}{p}}_{p,\infty})}.
\end{eqnarray*}
Also, we have
\begin{eqnarray*}
\|\Lambda e^{\sqrt{t}\Lambda_{1}}T_{\nabla\omega^{\ell}}u\|^{\ell}_{\tilde{L}^{1}_{T}(\dot{B}^{-\sigma}_{p,\infty})}
\leq\|e^{\sqrt{t}\Lambda_{1}}T_{\nabla\omega^{\ell}}u\|^{\ell}_{\tilde{L}^{1}_{T}(\dot{B}^{-\sigma}_{p,\infty})}\leq \|\nabla W^{\ell}\|_{\tilde L^{2}_{T}(\dot B^{-\sigma}_{p,\infty})}\|U\|_{\tilde L^{2}_{T}(\dot B^{\frac{3}{p}}_{p,\infty})}
\end{eqnarray*}
and
\begin{eqnarray*}
\|\Lambda e^{\sqrt{t}\Lambda_{1}}T_{\nabla\omega^h}u\|^{\ell}_{\tilde{L}^{1}_{T}(\dot{B}^{-\sigma}_{p,\infty})}
\leq\|e^{\sqrt{t}\Lambda_{1}}T_{\nabla\omega^h}u\|^{\ell}_{\tilde{L}^{1}_{T}(\dot{B}^{-\sigma-1}_{p,\infty})}\leq \|\nabla W^{h}\|_{\tilde L^{\infty}_{T}(\dot B^{-\sigma-2}_{p,\infty})}\|U\|_{\tilde L^{1}_{T}(\dot B^{\frac{3}{p}+1}_{p,\infty})}.
\end{eqnarray*}
Then we conclude with
\begin{eqnarray}\label{koko3}
\|\Lambda G\|^{\ell}_{\tilde{L}^{1}_{T}(\dot{B}^{-\sigma}_{p,\infty})}\lesssim\|(U,W)\|_{E^p_{T}}(\|(U,W)\|_{E^p_{T}}+\|(U,W)\|_{G^p_{T}}).
\end{eqnarray}

Therefore combining (\ref{uun}), (\ref{koko2}) with (\ref{koko3}), (\ref{sigma1}) is proved.

{\it \underline{Case for $p=1$.}}

The case $p=1$ requires delicate calculations concerns Gevrey nonlinear estimates. In fact the main strategy is similar to Lemma \ref{lem4.0} where we would impose different radius of analyticity. We first claim that if $\mathcal{X}_{0,1}$ small enough, than we have
\begin{eqnarray*}
\|e^{D\sqrt{t}\Lambda_{1}}(u,\omega)\|_{E^1_{T}}\ll1,\\ [1mm]
 \end{eqnarray*}
where the constant $D>1$ would be confirmed later. Above uniform bound is easily got by slight modification on radius of analyticity from Lemma \ref{lem4.0}. Now linear calculations with $p=1$ implicate that
\begin{eqnarray}
\|(U, W)\|_{G^1_{T}}\leq C\big(
\|(u_0,\Lambda \omega_0)\|^{\ell}_{\dot{B}^{-\sigma}_{1,\infty}}+\|(F,\Lambda G)\|^{\ell}_{\tilde L^{1}_{T}(\dot B^{-\sigma}_{1,\infty})}\big).
\end{eqnarray}
We would like to take the convection term $f$ as an example while all other terms enjoy similar calculations. Firstly by (\ref{decom}), there holds
\begin{eqnarray*}
\|e^{\sqrt{t}\Lambda_{1}} \mathbf{P}\big[\mathrm{div}[T_{u_{m}} u_{n}]_{mn}\big]\|_{\tilde{L}^1_{T}(\dot{B}^{-\sigma}_{1,\infty})}
\lesssim \|e^{c\sqrt{t}\Lambda} T_{u} u\|_{\tilde{L}^1_{T}(\dot{B}^{-\sigma+1}_{1,\infty})}.
\end{eqnarray*}
By the definition of $W_{j}$ in (\ref{estimlocT}), Corollary \ref{Euclidean} yields one could select proper $c_{1},c_{2}$ such that
\begin{eqnarray*}
\|W_{j}\|_{L^{1}_{T}L^1}&\lesssim&\sum_{j'\leq j-2} \|\dot{\Delta}_{j'}e^{c_{1}\sqrt{t}\Lambda}u\|_{L^{\infty}_{T}L^{\infty}}\|\dot{\Delta}_{j}e^{c_{2}\sqrt{t}\Lambda}u\|_{L^{1}_{T}L^{1}}\\
&\lesssim& \sum_{j'\leq j-2}2^{(3+\sigma)j'}2^{-\sigma j'}\|\dot{\Delta}_{j'}e^{c_{1}\sqrt{t}\Lambda}u\|_{L^{\infty}_{T}L^1}2^{4j}\|\dot{\Delta}_{j}e^{c_{2}\sqrt{t}\Lambda}u\|_{L^{1}_{T}L^1}.
\end{eqnarray*}
Therefore taking advantage $\sigma>-2$, we get
\begin{eqnarray*}
\|e^{c\sqrt {t}\Lambda} T_{u}u\|_{\tilde{L}^1_{T}(\dot{B}^{-\sigma+1}_{1,\infty})}
&\lesssim&\|e^{c_{1}\sqrt{t}\Lambda}u\|_{\tilde{L}^\infty_{T}(\dot{B}^{\sigma}_{1,\infty})} \|e^{c_{2}\sqrt{t}\Lambda}u\|_{\tilde{L}^1_{T}(\dot{B}^{\frac{3}{p}+1}_{p,\infty})}.
\end{eqnarray*}

Hence it is enough to keep $D$ large enough such that
\begin{eqnarray}\label{lllll}\|e^{c_{1}\sqrt{t}\Lambda}u\|_{\tilde{L}^\infty_{T}(\dot{B}^{-\sigma}_{1,\infty})}\lesssim
\|U\|_{\tilde{L}^\infty_{T}(\dot{B}^{-\sigma}_{1,\infty})};\end{eqnarray}
\begin{eqnarray}\label{llllll}
\|e^{c_{2}\sqrt{t}\Lambda}u\|_{\tilde{L}^1_{T} (\dot{B}^{4}_{1,\infty})}\lesssim
\|e^{D\sqrt{t}\Lambda_{1}}u\|_{\tilde{L}^1_{T} (\dot{B}^{4}_{1,q})},\end{eqnarray}
which implies
\begin{eqnarray}\label{nb1}
\|e^{\sqrt{t}\Lambda_{1}}\mathbf{P}\big[\mathrm{div}[T_{u_{m}} u_{n}]_{mn}\big]\|_{\tilde{L}^1_{T}(\dot{B}^{-\sigma}_{1,\infty})}
\lesssim(\|U\|_{G^1_{T}}+\|U\|_{E^1_{T}})\|e^{D\sqrt{t}\Lambda_{1}}u\|_{E^1_{T}}.
\end{eqnarray}
The other paraproduct term enjoys the same estimates as above. For the remainder, there firstly holds
\begin{eqnarray*}
\|e^{\sqrt{t}\Lambda_{1}}\mathbf{P}\big[\mathrm{div}[R(u_{m}, u_{n})]_{mn}\big]\|_{\tilde{L}^1_{T}(\dot{B}^{-\sigma}_{1,\infty})}
\lesssim \|e^{\sqrt{t}\Lambda_{1}}R(u, u)\|_{\tilde{L}^1_{T}(\dot{B}^{-\sigma+1}_{1,\infty})},
\end{eqnarray*}
In light of (\ref{remainder}), and Corollary \ref{Euclidean}, we could find $c_{1}$ small enough and $c_{2}$ large enough
\begin{eqnarray*}
&&\|\dot{\Delta}_{j}\mathcal{B}_t(\tilde{\dot{\Delta}}_{j'}e^{c_{1}\sqrt{t}\Lambda}u,\dot{\Delta}_{j'}e^{c_{2}\sqrt{t}\Lambda}\nabla u)\|_{L^1_{T} L^1}\\
&\lesssim&\sum_{j'\geq j-2}\|\dot{\Delta}_{j'}e^{c_{1}\sqrt{t}\Lambda}u\|_{L^\infty_{T}L^2}\|\dot{\Delta}_{j'}e^{c_{2}\sqrt{t}\Lambda}u\|_{L^1_{T}L^2}\\
&\lesssim&\sum_{j'\geq j-2}2^{\sigma j'}2^{-\sigma j'}\|\dot{\Delta}_{j'}e^{c_{1}\sqrt{t}\Lambda}u\|_{L^\infty_{T}L^1}2^{3j'}\|\dot{\Delta}_{j'}e^{c_{2}\sqrt{t}\Lambda}\nabla u\|_{L^1_{T}L^1}.
\end{eqnarray*}
Since that $\sigma<0$, then one can immediately have
\begin{eqnarray*}
\|e^{c\sqrt{t}\Lambda} R(u,u)\|_{\tilde{L}^1_{T}(\dot{B}^{-\sigma}_{1,\infty})}
\lesssim\|e^{c_{1}\sqrt{t}\Lambda}u\|_{\tilde{L}^\infty_{T}(\dot{B}^{-\sigma}_{1,\infty})}\|e^{c_{2}\sqrt{t}\Lambda}u\|_{\tilde{L}^1_{T} (\dot{B}^{4}_{1,q})}.
\end{eqnarray*}
Keep in mind (\ref{lllll}) and (\ref{llllll}), we arrive at
\begin{eqnarray}
\|e^{\sqrt{t}\Lambda_{1}}\mathbf{P}\big[\mathrm{div}[R(u_{m}, u_{n})]_{mn}\big]\|_{\tilde{L}^1_{T}(\dot{B}^{-\sigma}_{1,\infty})}
\lesssim(\|U\|_{G^1_{T}}+\|U\|_{E^1_{T}})\|e^{D\sqrt{t}\Lambda_{1}}u\|_{E^1_{T}},
\end{eqnarray}
that is
$$\|F\|_{\tilde{L}^1_{T}(\dot{B}^{-\sigma}_{1,\infty})}
\lesssim(\|U\|_{G^1_{T}}+\|U\|_{E^1_{T}})\|e^{D\sqrt{t}\Lambda_{1}}u\|_{E^1_{T}}.$$
Estimating $G$ just follows similar steps and we arrive at (\ref{sigma1}) for $p=1$.
\end{proof}

By Theorem \ref{thm1}, for $p>1$, one is able to find a $\eta\leq\frac{1}{2\tilde{C}}$ that for $\mathcal{X}_{0,p}\leq\eta$, there holds
\begin{eqnarray*}
C\|(U,W)\|_{E^p_{T}}\leq \tilde{C}\mathcal{X}_{0,p}\leq\frac{1}{2},
\end{eqnarray*}
which lead to FOR $1<p<\infty$
\begin{eqnarray}\label{mnbv}
\|(U,\Lambda W)\|^{\ell}_{\tilde{L}^\infty_{T}(\dot{B}^{-\sigma}_{p,\infty})}\leq C\big(\mathcal{D}_{0,p}+1).
\end{eqnarray}
Similarly in case $p=1$, we have (\ref{mnbv}) by assuming $\mathcal{X}_{0,1}$ small enough such that $\|e^{D\sqrt{t}\Lambda_{1}}(u,\omega)\|_{E^1_{T}}\leq\frac{1}{2}$. Taking advantage of $\|f\|_{\tilde{L}^\infty_{T}(\dot{B}^{s}_{p,\infty})}\approx\|f\|_{L^\infty_{T}(\dot{B}^{s}_{p,\infty})}$, we conclude with (\ref{qqrm}).

\subsection{Decay estimates}
\hspace*{\fill}

In this section, we shall establish the decay rates for solutions constructed in Section 2, that is Theorem \ref{thm2}. Inspired by (\ref{R-E4}), Gevrey analyticity indicates us to transform derivatives into time decay and low order analytic estimates, which has been given by Lemma \ref{lem4.0} and we are able to obtain decay rates for $u$ as heat kernels while $\omega$ decays exponentially. To give a clear sight, we state the following lemma concerns properties of Gevrey multiplier:
\begin{lem}\label{derivatives}
Let $s\in \mathbb{R}$, $p\in[1,\infty]$. For any tempered distribution $f$, it holds for all $t>0$, $m>0$ that
\begin{eqnarray}\label{ahha1}\|\Lambda^m f\|^{\ell}_{{\dot B}^{s}_{p,1}}\leq C_{m} t^{-\frac{m}{2}}\|e^{\sqrt{t}\Lambda_{1}}f\|^{\ell}_{{\dot B}^{s}_{p,\infty}},\end{eqnarray}
where $C_{m}\triangleq\frac{1}{1-2^{-m}}+\big(8m\big)^{m}\frac{1}{1-e^{-\frac{1}{8}}}$ with $j_{0}$ represents frequency cut-off. Moreover, it holds for $t>0$, $m>0$ that
\begin{eqnarray}\label{ahha2}\|\Lambda^m f\|^{h}_{{\dot B}^{s}_{p,1}}\leq C_{m} t^{-\frac{m}{2}}e^{-a\sqrt{t}}\|e^{\sqrt{t}\Lambda_{1}}f\|^{h}_{{\dot B}^{s}_{p,\infty}},\end{eqnarray}
where $a>0$ is to be confirmed.
\end{lem}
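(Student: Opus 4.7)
The plan is to derive both \eqref{ahha1} and \eqref{ahha2} from a single block-wise bound, namely
\begin{equation*}
\|\Lambda^m \dot\Delta_j f\|_{L^p} \leq C\, 2^{jm}\, e^{-c\sqrt{t}\,2^j}\, \|\dot\Delta_j e^{\sqrt{t}\Lambda_1} f\|_{L^p}, \qquad j\in\mathbb{Z},
\end{equation*}
with absolute constants $c, C>0$ depending only on the Littlewood-Paley profile $\varphi$. Bernstein's inequality gives $\|\Lambda^m \dot\Delta_j f\|_{L^p} \leq 2^{jm}\|\dot\Delta_j f\|_{L^p}$, and the remaining factor is handled by writing $\dot\Delta_j f = \dot\Delta_j e^{-\sqrt{t}\Lambda_1}\bigl(e^{\sqrt{t}\Lambda_1}f\bigr)$ and showing that the convolution kernel of $\dot\Delta_j e^{-\sqrt{t}\Lambda_1}$ has $L^1$ norm at most $Ce^{-c\sqrt{t}2^j}$. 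After the rescaling $\eta = 2^{-j}\xi$ this reduces to verifying $\|K_s\|_{L^1(\mathbb{R}^d)}\lesssim e^{-cs}$ for the kernel $\widehat{K_s}(\eta) = e^{-s|\eta|_1}\varphi(\eta)$ with $s=\sqrt{t}2^j$; splitting the annulus $\mathrm{supp}\,\varphi$ into the $2^d$ octants on which $|\eta|_1$ is a linear function of $\eta$ and integrating by parts in each octant separately provides the pointwise decay needed for the $L^1$ bound.

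Once the block-wise bound is in hand, I would sum against the Besov weights. For \eqref{ahha1}, one has
\begin{equation*}
\|\Lambda^m f\|^{\ell}_{\dot B^s_{p,1}} \leq C\,\|e^{\sqrt{t}\Lambda_1}f\|^{\ell}_{\dot B^s_{p,\infty}}\,\sum_{j\leq j_0} 2^{jm}e^{-c\sqrt{t}2^j},
\end{equation*}
and I split the geometric sum at the threshold $2^{j^\star}\sim t^{-1/2}$. The contribution from $j\leq j^\star$ is a geometric series bounded by $t^{-m/2}/(1-2^{-m})$. On the high-$j$ tail, the substitution $x_j=\sqrt{t}\,2^j\geq 1$ turns each term into $t^{-m/2}x_j^m e^{-cx_j}$ with $x_j$ doubling, so that the elementary bound $x^m e^{-cx/2}\leq(2m/c)^m$ together with the residual geometric factor $\sum_k e^{-c\,2^{k-1}x_0}\leq 1/(1-e^{-c/2})$ yields exactly the second summand in the claimed $C_m$ (the numerical values $8m$ and $1/8$ coming from the precise $c$ delivered by the kernel estimate).

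For \eqref{ahha2} I run the same summation over $j\geq j_0$ and use the uniform lower bound $2^j\geq 2^{j_0}$ to pull out a global $e^{-a\sqrt{t}}$ factor. Concretely, set $a:=\tfrac{c}{2}\,2^{j_0}$ and decompose
\begin{equation*}
e^{-c\sqrt{t}\,2^j}=e^{-a\sqrt{t}}\,e^{-(c2^j-a)\sqrt{t}}
\end{equation*}
with $c2^j-a\geq\tfrac{c}{2}2^j$ for all $j\geq j_0$; the remaining series $\sum_{j\geq j_0}2^{jm}e^{-(c/2)\sqrt{t}2^j}$ is then handled by the same dyadic splitting as in the low-frequency case.

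The main obstacle is the $L^1$-kernel bound for $\dot\Delta_j e^{-\sqrt{t}\Lambda_1}$, because the symbol $|\xi|_1$ is not globally smooth and the derivatives needed to produce spatial decay interact badly with the coordinate hyperplanes. The octant-by-octant integration-by-parts argument indicated above circumvents this; alternatively one can transfer the problem to the Euclidean Poisson semigroup $e^{-\sqrt{t}\Lambda}$ via Lemma~\ref{G1-G2} at the price of slightly modified constants, since the analogous bound for $e^{-\sqrt{t}\Lambda}$ is classical. Apart from this single technical point, the rest of the argument is routine dyadic bookkeeping.
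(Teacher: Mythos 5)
Your proposal is correct and follows essentially the same route as the paper: a block-wise bound of the form $\|\Lambda^m\dot\Delta_j f\|_{L^p}\lesssim 2^{jm}e^{-c\sqrt{t}2^j}\|\dot\Delta_j e^{\sqrt{t}\Lambda_1}f\|_{L^p}$ (the paper gets this directly from its Lemma~\ref{lem5.1}, imported from \cite{CDX}, rather than re-deriving the $L^1$-kernel estimate for $\dot\Delta_j e^{-\sqrt{t}\Lambda_1}$ as you sketch), followed by splitting the dyadic sum at the threshold $\sqrt{t}2^j\sim 1$ and, in the high-frequency case, factoring out $e^{-a\sqrt{t}}$ with $a\sim 2^{j_0}$. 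The only differences are cosmetic bookkeeping in the sum (the paper changes variables via $4^p<t\leq 4^{p+1}$ and $l=j+p+1$ where you use the substitution $x_j=\sqrt{t}2^j$), and your observation that one can equally transfer to the Euclidean symbol via Lemma~\ref{G1-G2} is also already covered by the final sentence of Lemma~\ref{lem5.1}.
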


\begin{proof}
By using Lemma \ref{lem5.1} and H$\ddot{o}$lder inequality, we have for frequency cut-off $j_{0}$ that
\begin{eqnarray}\label{Gd}
t^\frac{m}{2}\|\Lambda^m f\|^{\ell}_{{\dot B}^{s}_{p,1}}
&=&t^\frac{m}{2}\big\|2^{js}\|\Lambda^m e^{-\sqrt{t}\Lambda_{1}}\dot{\Delta}_{j}(e^{\sqrt{t}\Lambda_{1} }f)\|_{L^p}\big\|_{l^{1}_{j\leq j_{0}}}\\
\nonumber&\leq&C\big\|(\sqrt{t}2^{j})^m e^{-\frac{1}{2}\sqrt{t}2^{j}} 2^{js}\|\dot{\Delta}_{j}(e^{\sqrt{t}\Lambda_{1}}f)\|_{L^p}\big\|_{l^{1}_{j\leq j_{0}}}
\nonumber\\ \nonumber&\leq&C\big(\sum_{j\leq j_{0} }(\sqrt{t}2^{j})^m e^{-\frac{1}{2}\sqrt{t}2^{j}} \big)\|e^{\sqrt{t}\Lambda_{1}}f\|^{\ell}_{{\dot B}^{s}_{p,\infty}}.
\end{eqnarray}

We are left with bounding $\sum\limits_{j\leq j_{0} }(\sqrt{t}2^{j})^m e^{-\frac{1}{2}\sqrt{t}2^{j}} $ for given $t>0$. Indeed, one could always find an integer $p$ such that $4^{ p}<t\leq4^{p+1}$, at this moment, taking $l=j+p+1$, we have
$$\sum\limits_{j\leq j_{0}}(\sqrt{t} 2^j)^m e^{-\frac{1}{2}\sqrt{t} 2^{j}}\leq\sum\limits_{j\leq j_{0}}(2^{j+p+1})^m e^{-2^{j+p-1}}\leq\sum\limits_{l\in \mathbb{Z}}(2^{m})^l e^{-2^{l-2}}.$$

Now we claim that $\sum\limits_{l\in \mathbb{Z}}(2^{m})^l e^{-2^{l-2}}\leq C_{m}$ with $C_{m}\triangleq\frac{1}{1-2^{-m}}+\big(8m\big)^{m}\frac{1}{1-e^{-8}}$ for given $t$. Indeed,
\begin{eqnarray*}
\sum\limits_{l\in \mathbb{Z}}(2^{m})^l e^{-2^{l-2}}
&=&\sum\limits_{l\leq0}(2^{m})^l e^{-2^{l-2}}+\sum\limits_{l>0}(2^{m})^l e^{-2^{l-2}}=I_{1}+I_{2}.
\end{eqnarray*}
For $I_{1}$, one has
\begin{eqnarray*}
I_{1}\leq\sum\limits_{l\leq0}(2^m)^{l}=\frac{1}{1-2^{-m}}.
\end{eqnarray*}
For $I_{2}$, notice that for $m>0$
\begin{eqnarray}\label{poly-exp}
(2^{l})^m e^{-\frac{1}{8}2^{l}}
\leq (8m)^m e^{-m}.
\end{eqnarray}
Actually, define $f(x)=x^m e^{-\frac{1}{8}x}$, then $f'(x)=m x^{m-1} e^{-\frac{1}{8}x}-\frac{1}{8}x^m e^{-\frac{1}{8}x}$, thus $f'(8m)=0$, \\ so $f(x)\leq (8m)^m e^{-m}$,
which yields (\ref{poly-exp}). Consequently one has
\begin{eqnarray*}
I_{2}
\leq\big(8m\big)^{m}e^{-m}\sum\limits_{l>0}e^{-\frac{2^{l}}{8}}\leq\big(8m\big)^{m}e^{-m}\sum\limits_{l>0}e^{-\frac{l}{8}}=\big(8m\big)^{m}e^{-m}
\frac{e^{-\frac{1}{8}}}{1-e^{-\frac{1}{8}}}\leq\big(8m\big)^{m}\frac{1}{1-e^{-\frac{1}{8}}}.
\end{eqnarray*}
Thus we conclude with
\begin{eqnarray*}
t^\frac{m}{2}\|\Lambda^m f\|^{\ell}_{{\dot B}^{s}_{p,1}}
\leq C_{m}\|e^{\sqrt{t}\Lambda_{1}}f\|^{\ell}_{{\dot B}^{s}_{p,\infty}}.
\end{eqnarray*}

Furthermore, for the high frequencies, we arrive at
\begin{eqnarray}\label{Gd2}
t^\frac{m}{2}\|\Lambda^m f\|^{h}_{{\dot B}^{s}_{p,1}}
&=&t^\frac{m}{2}\big\|2^{js}\|\Lambda^m e^{-\sqrt{t}\Lambda}\dot{\Delta}_{j}(e^{\sqrt{t}\Lambda_{1}}f)\|_{L^p}\big\|_{l^{1}_{j\geq j_{0}}}\\
\nonumber&\leq&C\big\|( \sqrt{t} 2^j)^{m} e^{-\frac{1}{2}\sqrt{t} 2^{j}} 2^{js}\|\dot{\Delta}_{j}(e^{\sqrt{t}\Lambda_{1}}f)\|_{L^p}\big\|_{l^{1}_{j\geq j_{0}}}\\
 \nonumber&\leq&C\big(\sum_{j\geq j_{0} }( \sqrt{t} 2^{j})^{m} e^{-\frac{1}{4}\sqrt{t} 2^{j}}\big)e^{-\frac{1}{4}\sqrt{t} 2^{j}}\|e^{\sqrt{t}\Lambda_{1}}f\|^{h}_{{\dot B}^{s}_{p,\infty}}\\ \nonumber&\leq&C_{m}e^{-a\sqrt{t}}\|e^{\sqrt{t}\Lambda_{1}}f\|^{h}_{{\dot B}^{s}_{p,\infty}},
\end{eqnarray}
where we take $a=\frac{1}{4}2^{j_{0}}$. Hence, the proof of Lemma \ref{derivatives} is complete.
\end{proof}

Based on Lemma \ref{derivatives}, we now furnish the proof of Theorem \ref{thm2}. The following inequality holds true for $t>0$:
 \begin{eqnarray}\label{low-high01}
\|\Lambda^{l}(u,\omega)\|_{L^r}
\lesssim \|\Lambda^{l}(u,\omega)\|^{\ell}_{\dot{B}^{0}_{r,1}}+\|\Lambda^{l}(u,\omega)\|^{h}_{\dot{B}^{0}_{r,1}}.
\end{eqnarray}

{\it \underline{Polynomial decay for low frequencies.}}

Taking $m=l+\tilde{\sigma}, s=-\sigma$ in Lemma \ref{derivatives} yields the fact
solutions decay polynomially in the low frequencies:
\begin{eqnarray}\label{double}
\|\Lambda^{l}u\|^{\ell}_{\dot{B}^{0}_{r,1}}
\lesssim \|\Lambda^{l+\tilde{\sigma}}u\|^{\ell}_{\dot{B}^{-\sigma}_{p,1}}
\leq C_{l}t^{-\frac{l+\tilde{\sigma}}{2}}\|e^{\sqrt{t}\Lambda_{1}}u\|^{\ell}_{\dot{B}^{-\sigma}_{p,\infty}}
\end{eqnarray}
for $l>-\tilde{\sigma}$, where $\tilde{\sigma}\triangleq \sigma+\frac{3}{p}-\frac{3}{r}$.
Therefore, in light of (\ref{qqrm}), we conclude with
\begin{eqnarray}\label{double3}
\|\Lambda^{l}u\|^{\ell}_{\dot{B}^{0}_{r,1}}\leq C_{l}  t^{-\frac{\tilde{\sigma}}{2}-\frac{l}{2}}.
\end{eqnarray}
Similarly, for $\omega$, we have for $l>-\tilde{\sigma}+1$ such that
\begin{eqnarray}\label{double6}
\|\Lambda^{l}\omega\|^{\ell}_{\dot{B}^{0}_{r,1}}\leq C_{l}  t^{-\frac{\tilde{\sigma}}{2}+\frac{1}{2}-\frac{l}{2}}.
\end{eqnarray}

{\it \underline{Exponential decay for the high frequencies.}}

On the other hand, it is shown that the decay of the high frequencies of solutions is actually exponential in large time. Indeed, by taking advantage of embedding relationship in the high frequencies, i.e. $\dot{B}^{s_{1}}_{p,\infty}\hookrightarrow\dot{B}^{s_{2}}_{p,q}$ once $s_{1}>s_{2}$, we infer by Lemma \ref{derivatives} there exists a positive $c$ such that
\begin{eqnarray}\label{dh}
\|\Lambda^{l}u\|^{h}_{\dot{B}^{0}_{r,1}}
\lesssim\|\Lambda^{\gamma}u\|^{h}_{\dot{B}^{\frac{3}{p}-1}_{p,q}}\leq C_{\gamma} t^{-\frac{\gamma}{2}}e^{-c\sqrt{t}}\|e^{\sqrt{t}\Lambda_{1}}u\|^{h}_{\dot{B}^{\frac{3}{p}-1}_{p,q}}
\end{eqnarray}
provided  $\gamma> \max\{0,l+1-\frac{3}{r}\}$. Therefore, in light of Lemma \ref{lem4.0}, we obtain
\begin{eqnarray}\label{spacial}\|\Lambda^{l}u\|^{h}_{\dot{B}^{0}_{r,1}}\leq C_{\gamma} t^{-\frac{\gamma}{2}}e^{-c\sqrt{t}}.\end{eqnarray}
In terms of $\omega$, we also have for $\gamma> \max\{0,l+s-\frac{3}{r}\}$ such that
\begin{eqnarray}\label{spaciall}
\|\Lambda^{l}\omega\|^{h}_{\dot{B}^{0}_{r,1}}\leq C_{\gamma} t^{-\frac{\gamma}{2}}e^{-c\sqrt{t}}\|e^{\sqrt{t}\Lambda_{1}}\omega\|^{h}_{\dot{B}^{\frac{3}{p}-s}_{p,q}}
\leq C_{\gamma} t^{-\frac{\gamma}{2}}e^{-c\sqrt{t}}.
\end{eqnarray}
Therefor by , combining (\ref{double3})-(\ref{double6}) and (\ref{spacial})-(\ref{spaciall}) and taking $\gamma$ large enough, we finish the decay of $u$ in Theorem \ref{thm2}.\\

\noindent {\bf Conflicts of interest statement:}\ \

The author does not have any possible conflict of interest.

\section{Appendix}\setcounter{equation}{0}

\subsection{Littlewood-Paley theory and Besov space}
This section is devoted to give some preliminaries concerns Besov space and Gevrey multipliers. The Fourier transform $\widehat{f}=\mathcal{F}[f]$  of a function $f\in\mathcal{S}$ (the Schwartz class) is denoted by:
$$\widehat{f}(\xi)=\mathcal{F}[f](\xi):=\int_{\mathbb{R}^{N}}f(x)e^{-i\xi\cdot x}dx$$
for $ 1\leq p\leq \infty$, we denote by $L^{p}=L^{p}(\mathbb{R}^{N})$ the usual Lebesgue space on $\mathbb{R}^{N}$ with the norm $\|\cdot\|_{L^{p}}$.

Let us recall the Littlewood-Paley decomposition, the definitions of Besov spaces and Chemin-Lerner spaces, which are frequently used in the paper. Let $(\varphi,\chi)$ be a couple of smooth functions valued in $[0,1]$, such that $\varphi$ is
supported in the shell $\mathcal{C}(0,\frac{3}{4},\frac{8}{3})=\{\xi\in\mathbb{R}^{N}:\frac{3}{4}\leq|\xi|\leq\frac{8}{3}\}$, $\chi$ is supported in the ball $\mathcal{B}(0,\frac{4}{3})=\{\xi\in\mathbb{R}^{N}:|\xi|\leq\frac{4}{3}\}$ and
$$\forall\xi\in\mathbb{R}^{N},\quad \chi(\xi)+\sum_{q\in\mathbb{N}}\varphi(2^{-j}\xi)=1;$$
$$\forall\xi\in\mathbb{R}^{N}\backslash\{{0}\},\quad \sum_{q\in\mathbb{Z}}\varphi(2^{-j}\xi)=1.$$

For any tempered distribution $f\in\mathcal{S}'$, one can define localizations multiplier for $j\in \mathbb{Z}$ as follows:
\begin{eqnarray*}
&&\dot{\Delta}_{j}f:=\varphi(2^{-j}D)f=\mathcal{F}^{-1}(\varphi(2^{-j}\xi)\mathcal{F}f),
\end{eqnarray*}
\begin{eqnarray*}
&&\dot{S}_{j}f:=\chi(2^{-j}D)f=\mathcal{F}^{-1}(\chi(2^{-j}\xi)\mathcal{F}f).
\end{eqnarray*}

Denote by $\mathcal{S}'_{0}:=\mathcal{S'}/\mathcal{P}$ the tempered distributions modulo polynomials $\mathcal{P}$. It is well-known that Besov spaces can be
characterized by using the above spectral cut-off blocks.
The following Bernstein's lemma will be important throughout the paper.
\begin{lem}\label{lem2.1}
Let $1\leq p\leq q\leq\infty$. Then for any $\beta,\gamma\in(\mathbb{N}^+)^d$, we have a constant C independent of f,j such that
$$supp\widehat f\subseteq\{|\xi|\leq A_{0}2^j\} \Rightarrow \|\partial^\mu f\|_{L^q}\leq C_{\mu}2^{j|\mu|+dj(\frac{1}{p}-\frac{1}{q})}\|f\|_{L^p},$$
$$supp\widehat f\subseteq\{A_{1}2^j\leq|\xi|\leq A_{2}2^j\} \Rightarrow C_{\mu}2^{j|\mu|}\|f\|_{L^p}\leq \sup_{\mu}\|\partial^\mu f\|_{L^p}\leq C_{\mu}2^{j|\mu|}\|f\|_{L^p},$$
$$supp\widehat f\subseteq\{A_{1}2^j\leq|\xi|\leq A_{2}2^j\} \Rightarrow \| f\|_{L^q}\leq C2^{-dj(\frac{1}{p}-\frac{1}{q})}\|f\|_{L^p}.$$
\end{lem}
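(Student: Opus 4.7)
The plan is to reduce all three inequalities to Young's convolution inequality applied to a rescaled Schwartz kernel, exploiting the Fourier support restriction to insert a tailored smooth cut-off multiplier.

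For the first inequality, I would choose an auxiliary bump $\tilde\chi\in C_c^\infty(\mathbb{R}^d)$ with $\tilde\chi\equiv 1$ on the ball $\{|\xi|\le A_0\}$. Since $\mathrm{supp}\,\hat f\subseteq\{|\xi|\le A_0 2^j\}$, we have $\hat f(\xi)=\tilde\chi(2^{-j}\xi)\hat f(\xi)$, hence
\begin{equation*}
\partial^\mu f = K_j^\mu * f,\qquad K_j^\mu(x):=\mathcal F^{-1}\!\bigl[(i\xi)^\mu\tilde\chi(2^{-j}\xi)\bigr](x)=2^{jd+j|\mu|}h_\mu(2^j x),
\end{equation*}
with $h_\mu:=\mathcal F^{-1}[(i\xi)^\mu\tilde\chi(\xi)]\in\mathcal S(\mathbb{R}^d)$ fixed. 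Young's inequality with $1+\tfrac1q=\tfrac1p+\tfrac1r$ then gives $\|\partial^\mu f\|_{L^q}\le\|K_j^\mu\|_{L^r}\|f\|_{L^p}$, and the scaling $\|K_j^\mu\|_{L^r}=2^{j|\mu|+jd(1/p-1/q)}\|h_\mu\|_{L^r}$ delivers the stated bound with $C_\mu=\|h_\mu\|_{L^r}$. The third inequality is precisely the special case $\mu=0$ of this argument, with $\tilde\chi$ replaced by a $\tilde\varphi\in C_c^\infty(\mathbb{R}^d\setminus\{0\})$ that equals $1$ on the annulus $\{A_1\le|\xi|\le A_2\}$.

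For the upper bound in the second inequality ($p=q$), repeat the first step with $\tilde\varphi$ in place of $\tilde\chi$; Young's inequality with $r=1$ yields the factor $2^{j|\mu|}$ since $\|h_\mu\|_{L^1}<\infty$. For the lower bound I would use that, fixing $k=|\mu|$, the homogeneous polynomial $P_k(\xi):=\sum_{|\mu|=k}|\xi^\mu|^2$ of degree $2k$ is strictly positive on the unit sphere, hence $P_k(\xi)\ge c_k|\xi|^{2k}$ on the annulus. Thus $m(\xi):=\tilde\varphi(\xi)/P_k(\xi)\in C_c^\infty(\mathbb{R}^d\setminus\{0\})$ is smooth and compactly supported, and the identity
\begin{equation*}
\hat f(\xi)=\sum_{|\mu|=k}m(2^{-j}\xi)\,\overline{(i\xi)^\mu}\,\widehat{\partial^\mu f}(\xi)\cdot 2^{-2jk}\cdot(\text{with the correct normalization})
\end{equation*}
reconstructs $f$ as a finite sum of convolutions of the $\partial^\mu f$ against rescaled Schwartz kernels of $L^1$ norm $\lesssim 2^{-jk}$. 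Young's inequality then yields $\|f\|_{L^p}\le C\,2^{-jk}\sup_{|\mu|=k}\|\partial^\mu f\|_{L^p}$.

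The result is entirely classical; there is no essential obstacle. The only care needed is the bookkeeping of constants: $C_\mu$ in the upper bounds depends on the chosen $\tilde\chi,\tilde\varphi$ and on $\mu$ only through $\|h_\mu\|_{L^r}$, while the constant in the lower bound depends on $d$ and $k$ through $c_k$ and the smoothness of $m$. Since $\tilde\chi,\tilde\varphi$ can be fixed once and for all independently of $j$, every constant is $j$-independent, which is the content of the lemma.
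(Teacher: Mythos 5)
Your proof is correct, and it is the standard argument for Bernstein's inequalities: insert a fixed smooth cut-off equal to $1$ on the ball (resp.\ annulus), write $\partial^\mu f$ as convolution with a rescaled Schwartz kernel, and invoke Young's inequality; the lower bound follows from the complementary observation that $\sum_{|\mu|=k}|\xi^\mu|^2$ is bounded below on the annulus, so one can divide and reconstruct $f$ from its $k$-th derivatives via convolution against rescaled $C_c^\infty$ kernels of $L^1$-norm $O(2^{-jk})$. The paper does not actually supply a proof of this lemma---it is stated in the appendix as the classical Bernstein lemma, with the expectation that the reader refer to standard references such as Bahouri--Chemin--Danchin---so there is nothing to compare against; your argument fills that gap cleanly, and the only care required (as you note) is that the auxiliary bumps $\tilde\chi,\tilde\varphi$ are chosen once, independently of $j$, so that all constants are $j$-uniform.
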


\begin{defn}\label{defn2.1}
For $s\in \mathbb{R}$ and $1\leq p,r\leq \infty$, the homogeneous Besov spaces $\dot{B}^s_{p,r}$ are defined by
$$\dot{B}^s_{p,r}:=\Big\{f\in \mathcal{S}'_{0}:\|f\|_{\dot{B}^s_{p,r}}<\infty  \Big\} ,$$
where
\begin{equation*}
\|f\|_{\dot{B}^s_{p,r}}:=\Big(\sum_{q\in\mathbb{Z}}(2^{qs}\|\dot{\Delta}_qf\|_{L^{p}})^{r}\Big)^{1/r}
\end{equation*}
with the usual convention if $r=\infty$.
\end{defn}

In what follows, we would like to present some analysis tools which will be used in the subsequent proof. The embedding properties will be used several times throughout the paper.
\begin{prop}\label{prop2.1}
Let  $s\in\mathbb{R}$. There holds
$$\dot{B}_{p_1,r}^{s}\hookrightarrow \dot{B}_{p_2,\tilde{r}}^{s-d(\frac{1}{p_{1}}-\frac{1}{p_{2}})}$$
 when $1\leq p_1\leq p_2\leq\infty$ and
$1\leq r \leq \tilde{r} \leq \infty$;
\end{prop}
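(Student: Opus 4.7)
The plan is to reduce the embedding to a blockwise Bernstein estimate followed by a trivial inclusion of sequence spaces. Concretely, I would first fix $f\in\dot B^{s}_{p_1,r}$ and apply the third part of Lemma \ref{lem2.1} (Bernstein's lemma) to each localized piece $\dot\Delta_q f$, which is spectrally localized in an annulus $\{A_1 2^q\le|\xi|\le A_2 2^q\}$. This yields
\begin{equation*}
\|\dot\Delta_q f\|_{L^{p_2}}\le C\,2^{qd(\frac{1}{p_1}-\frac{1}{p_2})}\|\dot\Delta_q f\|_{L^{p_1}},
\end{equation*}
with $C$ independent of $q$, precisely because the hypothesis $p_1\le p_2$ is available.

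Next I would multiply both sides by $2^{q(s-d(\frac{1}{p_1}-\frac{1}{p_2}))}$, which absorbs the $2^{qd(\frac{1}{p_1}-\frac{1}{p_2})}$ factor on the right into the desired Besov scaling, giving
\begin{equation*}
2^{q\bigl(s-d(\frac{1}{p_1}-\frac{1}{p_2})\bigr)}\|\dot\Delta_q f\|_{L^{p_2}}\le C\,2^{qs}\|\dot\Delta_q f\|_{L^{p_1}}.
\end{equation*}
At this point the proof is reduced to a statement about the sequences $a_q\triangleq 2^{qs}\|\dot\Delta_q f\|_{L^{p_1}}\in\ell^{r}(\mathbb Z)$. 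Since $1\le r\le\tilde r\le\infty$, the elementary inclusion $\ell^{r}(\mathbb Z)\hookrightarrow\ell^{\tilde r}(\mathbb Z)$ (with norm $1$) applied to the sequence on the left gives
\begin{equation*}
\|f\|_{\dot B^{s-d(\frac{1}{p_1}-\frac{1}{p_2})}_{p_2,\tilde r}}
=\bigl\|2^{q(s-d(\frac{1}{p_1}-\frac{1}{p_2}))}\|\dot\Delta_q f\|_{L^{p_2}}\bigr\|_{\ell^{\tilde r}}
\le C\,\|a_q\|_{\ell^{\tilde r}}\le C\,\|a_q\|_{\ell^{r}}=C\,\|f\|_{\dot B^{s}_{p_1,r}}.
\end{equation*}

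There is essentially no obstacle here: the only ingredients are Bernstein's inequality on dyadic annuli (already recorded as Lemma \ref{lem2.1}) and the monotonicity of $\ell^r$ norms in the exponent. The only mild subtlety to flag is that the spaces are taken modulo polynomials, so one should note at the outset that both sides are well defined on $\mathcal{S}'_0$, and that the case $\tilde r=\infty$ or $p_2=\infty$ requires the usual convention in Definition \ref{defn2.1} but no new argument.
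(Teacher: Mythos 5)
Your proof is correct and is the standard argument for this embedding: blockwise Bernstein on each dyadic annulus to trade $L^{p_1}$ for $L^{p_2}$ at the cost of a power $2^{qd(1/p_1-1/p_2)}$, then the monotone inclusion $\ell^r\hookrightarrow\ell^{\tilde r}$. (The paper states this proposition as a preliminary without proof, so there is nothing to compare against; note only that the third display in Lemma \ref{lem2.1} as printed carries a sign typo in the exponent, $2^{-dj(\frac{1}{p}-\frac{1}{q})}$ rather than $2^{dj(\frac{1}{p}-\frac{1}{q})}$, and you correctly used the latter — equivalently one may invoke the first display of Lemma \ref{lem2.1} with $\mu=0$.)
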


Also, we introduce the following equivalent norm of homogeneous Besov space.
\begin{prop}\label{prop7.2}
Let $s\in \mathbb{R}$ and $1\leq p, r\leq \infty$. Let $(f_{j})_{j\in \mathbb{Z}} $ be a sequence of
$L^p$ functions such that $\sum\limits_{j\in\Z} f_j$ converges to some distribution $f$ in $\cS'_0$ and
$$\Bigl\|2^{js}\|f_j\|_{L^p(\R^d)}\Bigr\|_{\ell^r(\Z)}<\infty.$$
If $\mathrm{supp} \hat{f}_{j}\subset\mathcal{C}(0,2^jR_{1},2^jR_{2})$ for some $0<R_{1}<R_{2},$ then $f$ belongs to $\dot{B}^{s}_{p,r}$ and there exists a constant $C$ such that
 \begin{equation*}
\|f\|_{\dot{B}^{s}_{p,r}}\leq C\Bigl\|2^{js}\|f_j\|_{L^p(\R^d)}\Bigr\|_{\ell^r(\Z)}\cdotp
 \end{equation*}
 \end{prop}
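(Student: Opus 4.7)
The plan is to apply $\dot\Delta_k$ to the identity $f=\sum_{j\in\mathbb{Z}} f_j$, exploit the fact that $\dot\Delta_k f_j$ vanishes unless $k$ is within a fixed distance of $j$, and then conclude by a discrete Young-type inequality in $\ell^r$.

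First, since $\dot\Delta_k$ is a Fourier multiplier with a smooth compactly supported symbol, it is continuous on $\mathcal{S}'_{0}$, so the assumed convergence of $\sum_j f_j$ to $f$ gives $\dot\Delta_k f=\sum_{j\in\mathbb{Z}}\dot\Delta_k f_j$ in $\mathcal{S}'_{0}$. The spectrum of $\dot\Delta_k f_j$ lies in $\mathcal{C}(0,\tfrac{3}{4}2^{k},\tfrac{8}{3}2^{k})\cap\mathcal{C}(0,R_1 2^{j},R_2 2^{j})$, which is empty whenever $2^{k}\ll R_1 2^{j}$ or $2^{k}\gg R_2 2^{j}$. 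Hence there exists an integer $N_0=N_0(R_1,R_2)$ such that
\begin{equation*}
\dot\Delta_k f=\sum_{|k-j|\leq N_0}\dot\Delta_k f_j.
\end{equation*}

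Next, by a standard kernel estimate $\dot\Delta_k g=h_k\ast g$ with $\|h_k\|_{L^1}\leq C$ independently of $k$, so Young's inequality gives $\|\dot\Delta_k f_j\|_{L^p}\leq C\|f_j\|_{L^p}$ uniformly in $j,k$. Multiplying by $2^{ks}$ and writing $2^{ks}=2^{(k-j)s}2^{js}$, I obtain
\begin{equation*}
2^{ks}\|\dot\Delta_k f\|_{L^p}\leq C\sum_{|k-j|\leq N_0} 2^{(k-j)s}\,2^{js}\|f_j\|_{L^p}.
\end{equation*}
The right-hand side is a discrete convolution of the sequence $(2^{js}\|f_j\|_{L^p})_{j\in\mathbb{Z}}$ with the finitely supported sequence $(2^{ms}\mathbf{1}_{|m|\leq N_0})_{m\in\mathbb{Z}}$. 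Taking the $\ell^r$ norm in $k$ and applying Young's convolution inequality on $\ell^r(\mathbb{Z})$ yields
\begin{equation*}
\bigl\|2^{ks}\|\dot\Delta_k f\|_{L^p}\bigr\|_{\ell^r(\mathbb{Z})}\leq C\bigl\|2^{js}\|f_j\|_{L^p}\bigr\|_{\ell^r(\mathbb{Z})},
\end{equation*}
which by Definition \ref{defn2.1} is precisely the desired bound on $\|f\|_{\dot B^{s}_{p,r}}$.

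There is really no serious obstacle here; the only delicate point is the legitimacy of interchanging $\dot\Delta_k$ with the sum, which is secured by the hypothesis that $\sum_j f_j$ converges in $\mathcal{S}'_{0}$ together with continuity of the Fourier multiplier $\dot\Delta_k$ on this space. Everything else reduces to the spectral localization $|k-j|\leq N_0$ and Young's inequality, once in $\mathbb{R}^d$ (to bound $\|\dot\Delta_k f_j\|_{L^p}$) and once in $\ell^r(\mathbb{Z})$ (to bound the resulting discrete convolution).
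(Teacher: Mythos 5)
Your proof is correct and is exactly the standard argument for this classical characterization of homogeneous Besov spaces (it is, for instance, Lemma~2.23 in Bahouri--Chemin--Danchin, which the paper cites). The paper itself states Proposition~\ref{prop7.2} in the appendix without giving a proof, treating it as a known tool, so there is nothing to compare against beyond noting that your three ingredients --- continuity of $\dot\Delta_k$ on $\mathcal{S}'_0$ to justify the term-by-term action, the finite spectral overlap $|k-j|\leq N_0$ coming from $\mathrm{supp}\,\hat f_j\subset\mathcal{C}(0,2^jR_1,2^jR_2)$, the uniform $L^1$ bound on the $\dot\Delta_k$ kernel, and discrete Young in $\ell^r$ --- are precisely the standard ones.
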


 The interpolation relationship of Besov space would be a useful tool.
 \begin{prop} \label{prop7.3}
Setting $1\leq p,r_{1},r_{2}, r\leq \infty, \sigma_{1}\neq \sigma_{2}$ and $\theta \in (0,1)$. There holds that:
$$\|f\|_{\dot{B}_{p,r}^{\theta \sigma_{1}+(1-\theta )\sigma_{2}}}\lesssim \|f\| _{\dot{B}_{p,r_{1}}^{\sigma_{1}}}^{\theta} \|f\|_{\dot{B}_{p,r_2}^{\sigma_{2}}}^{1-\theta }$$
with $\frac{1}{r}=\frac{\theta}{r_{1}}+\frac{1-\theta}{r_{2}}$.
\end{prop}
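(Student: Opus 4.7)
The plan is to derive this interpolation inequality by a direct application of H\"older's inequality to the sequence of Littlewood--Paley blocks. Setting $s = \theta\sigma_{1} + (1-\theta)\sigma_{2}$ for the target regularity, the fundamental observation is the pointwise factorization
\[
2^{js}\|\dot\Delta_{j} f\|_{L^p} = \bigl(2^{j\sigma_{1}}\|\dot\Delta_{j} f\|_{L^p}\bigr)^{\theta}\bigl(2^{j\sigma_{2}}\|\dot\Delta_{j} f\|_{L^p}\bigr)^{1-\theta},
\]
which is immediate from $s = \theta\sigma_{1} + (1-\theta)\sigma_{2}$ and $\theta + (1-\theta) = 1$. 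Raising both sides to the power $r$ and summing over $j\in\mathbb{Z}$, I would apply H\"older's inequality on $\ell^1(\mathbb{Z})$ with conjugate exponents $p_{1} = r_{1}/(\theta r)$ and $p_{2} = r_{2}/((1-\theta)r)$. The requirement $1/p_{1} + 1/p_{2} = 1$ reduces exactly to the hypothesis $1/r = \theta/r_{1} + (1-\theta)/r_{2}$, so the application is legitimate.

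This yields
\[
\sum_{j\in\mathbb{Z}} \bigl(2^{js}\|\dot\Delta_{j} f\|_{L^p}\bigr)^{r} \leq \Bigl(\sum_{j\in\mathbb{Z}}\bigl(2^{j\sigma_{1}}\|\dot\Delta_{j} f\|_{L^p}\bigr)^{r_{1}}\Bigr)^{\theta r/r_{1}}\Bigl(\sum_{j\in\mathbb{Z}}\bigl(2^{j\sigma_{2}}\|\dot\Delta_{j} f\|_{L^p}\bigr)^{r_{2}}\Bigr)^{(1-\theta) r/r_{2}},
\]
and taking the $r$-th root of both sides recovers precisely the interpolation inequality $\|f\|_{\dot B^{s}_{p,r}} \leq \|f\|_{\dot B^{\sigma_{1}}_{p,r_{1}}}^{\theta}\|f\|_{\dot B^{\sigma_{2}}_{p,r_{2}}}^{1-\theta}$, with implicit constant $1$.

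The endpoint cases in which one or more of $r, r_{1}, r_{2}$ equals $\infty$ are handled with the usual convention that $\ell^{\infty}$ replaces the sum by a supremum and $\infty/\infty$ is read as $1$; the degenerate H\"older inequality then produces the corresponding mixed sup-norm version without change. I do not anticipate a genuine obstacle here: the argument is essentially a one-line application of H\"older and neither uses the spectral localization of $\dot\Delta_{j}$ nor invokes Bernstein's lemma. The hypothesis $\sigma_{1}\neq\sigma_{2}$ plays no role in the computation itself and is stated only to emphasize that the bound is genuine interpolation rather than a tautology.
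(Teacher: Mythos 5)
Your proof is correct, and it is the standard argument for this statement: the paper cites Proposition~\ref{prop7.3} as a classical Besov-space interpolation fact without supplying a proof, and the pointwise factorization of $2^{js}\|\dot\Delta_j f\|_{L^p}$ followed by H\"older on $\ell^r(\mathbb{Z})$ with exponents $r_1/(\theta r)$ and $r_2/((1-\theta)r)$ is exactly how one establishes it (cf.\ Bahouri--Chemin--Danchin, Prop.~2.22). You are also right that the H\"older conjugacy condition $1/p_1+1/p_2=1$ reduces to $1/r=\theta/r_1+(1-\theta)/r_2$, that $p_1,p_2\geq1$ follows automatically from this relation, and that the hypothesis $\sigma_1\neq\sigma_2$ is not used in this H\"older-constrained version (it becomes essential only for the variant that gains a full unit in the third index, e.g.\ $\|f\|_{\dot B^{s}_{p,1}}\lesssim\|f\|_{\dot B^{\sigma_1}_{p,\infty}}^{\theta}\|f\|_{\dot B^{\sigma_2}_{p,\infty}}^{1-\theta}$, where one optimizes a split frequency cutoff and needs the geometric series $\sum 2^{\pm j(\sigma_1-\sigma_2)\theta}$ to converge on each side).
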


Moreover, a class of mixed space-time Besov spaces are also used when studying the evolution PDEs, which was firstly
proposed by Chemin and Lerner in \cite{NS}.
\begin{defn}\label{defn2.2}
For $s\in \mathbb{R}, 1\leq r,\theta \leq \infty$, the homogeneous Chemin-Lerner spaces $\tilde{L}^\theta_{T}(\dot{B}^s_{p,r})$ are defined for any $T>0$ by
$$\tilde{L}^\theta_{T}(\dot{B}^s_{p,r}):=\Big\{f\in L^\theta(0,T;\mathcal{S}'_{0}) :\|f\|_{\tilde{L}^\theta_{T}(\dot{B}^s_{p,r})}<\infty  \Big\}, $$
where
$$\|f\|_{\tilde{L}^\theta_{T}(\dot{B}^s_{p,r})}:=\Big(\sum_{q\in \mathbb{Z}}(2^{js}\|\dot{\Delta}_jf\|_{L^\theta_{T}(L^{p})})^{r}\Big)^{1/r} $$
with the usual convention if  $r=\infty$.
\end{defn}

The Chemin-Lerner space $\widetilde{L}^{\theta}_{T}(X)$ with $X=\dot{B}^{s}_{p,r}$ may be linked with the standard spaces $L_{T}^{\theta}(X)$ by means of the Minkowski's inequality.
\begin{rem}\label{Rem2.1}
It holds that
$$\left\|f\right\|_{\widetilde{L}^{\theta}_{T}(X)}\leq\left\|f\right\|_{L^{\theta}_{T}(X)}\,\,\,
\mbox{if} \,\, \, r\geq\theta;\ \ \ \
\left\|f\right\|_{\widetilde{L}^{\theta}_{T}(X)}\geq\left\|f\right\|_{L^{\theta}_{T}(X)}\,\,\,
\mbox{if}\,\,\, r\leq\theta.
$$
\end{rem}

The product estimates in Besov space play an elementary role in our analysis, we first introduce the Bony decomposition where
$$ab=T_{a}b+R(a,b)+T_{b}a,$$
while
$$T_{a}b=\sum_{j'}\tilde{\dot{S}}_{j'-2}a\dot{\Delta}_{j'}b;\qquad
R(a,b)=\sum_{j'}\tilde{\dot{\Delta}}_{j'}a\dot{\Delta}_{j'}b.$$

The following paraproduct estimates and remainder estimates are classical:
\begin{lem}\label{product}
Let $1\leq p\leq\infty$, $1\leq q\leq\infty$. Then if $s_{1}<\frac{d}{p}$, then it holds
$$\|T_{a}b\|_{{\dot B}^{s_{1}+s_{2}-\frac{d}{p}}_{p,q}}\lesssim\|a\|_{{\dot B}^{s_{1}}_{p,\infty}}\|b\|_{{\dot B}^{s_{2}}_{p,q}}.$$
Moreover, if $s_{1}+s_{2}> d\max\{0, \frac{2}{p}-1\}$, then it holds
$$\|R(a,b)\|_{{\dot B}^{s_{1}+s_{2}-\frac{d}{p}}_{p,q}}\lesssim\|a\|_{{\dot B}^{s_{1}}_{p,\infty}}\|b\|_{{\dot B}^{s_{2}}_{p,q}}.$$
\end{lem}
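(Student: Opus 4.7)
\textbf{Proof proposal for Lemma \ref{product}.}

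My plan is to treat the paraproduct $T_ab$ and the remainder $R(a,b)$ separately, exploiting the difference in the localization of the Fourier supports of the dyadic blocks $\tilde{\dot{S}}_{j'-2}a\,\dot{\Delta}_{j'}b$ and $\tilde{\dot{\Delta}}_{j'}a\,\dot{\Delta}_{j'}b$. In both cases, I will apply a dyadic block $\dot{\Delta}_j$ to the decomposition, use Hölder and Bernstein's inequalities (Lemma \ref{lem2.1}) to get an $L^p$-bound on each piece, and then invoke Proposition \ref{prop7.2} (or its proof strategy) to reassemble the Besov norm.

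For the paraproduct $T_ab=\sum_{j'}\tilde{\dot{S}}_{j'-2}a\,\dot{\Delta}_{j'}b$, the standard observation is that the spectral support of each summand lies in an annulus of size $\sim 2^{j'}$, so $\dot{\Delta}_j T_ab$ collects only the terms with $|j-j'|\le N$ for some fixed $N$. I bound $\|\tilde{\dot{S}}_{j'-2}a\,\dot{\Delta}_{j'}b\|_{L^p}\le \|\tilde{\dot{S}}_{j'-2}a\|_{L^\infty}\|\dot{\Delta}_{j'}b\|_{L^p}$, and then use Bernstein to write
\[
\|\tilde{\dot{S}}_{j'-2}a\|_{L^\infty}\lesssim \sum_{k\le j'-2}2^{kd/p}\|\dot{\Delta}_k a\|_{L^p}\lesssim \Big(\sum_{k\le j'-2}2^{k(d/p-s_1)}\Big)\|a\|_{\dot B^{s_1}_{p,\infty}}.
\]
Here the assumption $s_1<d/p$ is exactly what makes this geometric series converge to a quantity comparable to $2^{j'(d/p-s_1)}\|a\|_{\dot B^{s_1}_{p,\infty}}$. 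Multiplying by $2^{j(s_1+s_2-d/p)}$ and using $|j-j'|\le N$, the $j'$-powers cancel and I obtain $2^{j(s_1+s_2-d/p)}\|\dot{\Delta}_j T_a b\|_{L^p}\lesssim c_j\|a\|_{\dot B^{s_1}_{p,\infty}}\|b\|_{\dot B^{s_2}_{p,q}}$ with $(c_j)\in\ell^q$, which is the desired paraproduct estimate.

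For the remainder, the summand $\tilde{\dot{\Delta}}_{j'}a\,\dot{\Delta}_{j'}b$ has spectral support in a \emph{ball} of radius $\sim 2^{j'}$, hence $\dot{\Delta}_j R(a,b)$ now gathers all the terms with $j'\ge j-N$. This is where the structural condition $s_1+s_2>d\max\{0,\tfrac{2}{p}-1\}$ enters: I need a bound on $\|\dot{\Delta}_j(\tilde{\dot{\Delta}}_{j'}a\,\dot{\Delta}_{j'}b)\|_{L^p}$, so I separate the two cases according to whether $p\ge 2$ or $p<2$. When $p\ge 2$, pure Hölder $L^p\cdot L^\infty$ together with Bernstein (to gain $2^{j'd/p}$ from the $L^p\to L^\infty$ embedding at frequency $2^{j'}$) yields a factor $2^{j'd/p}\|\dot{\Delta}_{j'}a\|_{L^p}\|\dot{\Delta}_{j'}b\|_{L^p}$; the $\ell^q$-summability then requires $s_1+s_2>0$. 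When $p<2$, an additional Bernstein step from $L^p$ up to $L^{p/(2-p)}$ (or equivalently a $2^{jd(2/p-1)}$ loss when applying $\dot\Delta_j$) costs $2^{jd(2/p-1)}$, and one arrives at the sharper threshold $s_1+s_2>d(2/p-1)$. In both cases, the key step is $2^{(j-j')\beta}$ summability for the correct $\beta>0$, after which the standard $\ell^q$-Cauchy--Schwarz argument (or Young's convolution inequality on $\ell^q$) finishes the proof.

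I expect the main obstacle to be the low-regularity bookkeeping in the remainder estimate when $p<2$, since the Bernstein embedding introduces the dimensional loss $d(2/p-1)$ that must be matched \emph{exactly} by the hypothesis on $s_1+s_2$; the paraproduct case, by contrast, reduces to a single geometric sum controlled by $s_1<d/p$. Once both pieces are in place, the product estimates follow from the Bony decomposition $ab=T_ab+R(a,b)+T_ba$ with no extra work.
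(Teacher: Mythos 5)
The paper itself offers no proof of Lemma \ref{product}; it is quoted as a classical fact (essentially Theorems~2.47 and~2.52 of Bahouri--Chemin--Danchin), so your reconstruction fills an omission rather than paralleling an argument in the text.

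Your paraproduct step is correct: the annular support of $\tilde{\dot S}_{j'-2}a\,\dot\Delta_{j'}b$ restricts to $|j-j'|\le N$, the Bernstein bound $\|\tilde{\dot S}_{j'-2}a\|_{L^\infty}\lesssim\sum_{k\le j'-2}2^{kd/p}\|\dot\Delta_k a\|_{L^p}$ is right, and $s_1<d/p$ is exactly what closes the geometric series. The remainder, however, contains a genuine bookkeeping error in the case $p\ge 2$. You apply Bernstein to the \emph{inner} factor, bounding $\|\tilde{\dot\Delta}_{j'}a\|_{L^\infty}\lesssim 2^{j'd/p}\|\tilde{\dot\Delta}_{j'}a\|_{L^p}$, and then assert that this yields the threshold $s_1+s_2>0$. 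It does not: plugging $2^{j'd/p}\|\dot\Delta_{j'}a\|_{L^p}\|\dot\Delta_{j'}b\|_{L^p}$ into
\[
2^{j(s_1+s_2-\frac{d}{p})}\|\dot\Delta_j R(a,b)\|_{L^p}\lesssim\sum_{j'\ge j-N}2^{(j-j')(s_1+s_2-\frac{d}{p})}\bigl(2^{j's_1}\|\dot\Delta_{j'}a\|_{L^p}\bigr)\bigl(2^{j's_2}\|\dot\Delta_{j'}b\|_{L^p}\bigr)
\]
gives a series over $j-j'\le N$ that converges only if $s_1+s_2>d/p$, which is strictly weaker than the stated $s_1+s_2>0$. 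The factor has to be $2^{jd/p}$, produced by first applying H\"older in $L^{p/2}$ (legitimate since $p/2\ge 1$), namely $\|\tilde{\dot\Delta}_{j'}a\,\dot\Delta_{j'}b\|_{L^{p/2}}\le\|\dot\Delta_{j'}a\|_{L^p}\|\dot\Delta_{j'}b\|_{L^p}$, and then Bernstein on the \emph{outer} block $\dot\Delta_j$ to pass from $L^{p/2}$ to $L^p$, yielding the loss $2^{jd(\frac{2}{p}-\frac{1}{p})}=2^{jd/p}$; this $j$-dependent factor cancels exactly against the $2^{-jd/p}$ in the target Besov index and produces the kernel $2^{(j-j')(s_1+s_2)}$, summable precisely when $s_1+s_2>0$. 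A similar remark applies to your $p<2$ discussion: the $2^{jd(\frac{2}{p}-1)}$ factor is not a loss ``when applying $\dot\Delta_j$''; it arises from Bernstein applied to $\tilde{\dot\Delta}_{j'}a$ passing from $L^p$ to $L^{p'}$ at frequency $2^{j'}$ (so it should carry $j'$, not $j$), combined with the outer Bernstein $L^1\to L^p$ at frequency $2^{j}$; only after telescoping do the two produce the kernel $2^{(j-j')(s_1+s_2-d(\frac{2}{p}-1))}$. The conclusions you state are correct, but the supporting computations as written do not give them, and the misplacement of Bernstein on the inner versus outer block is the crux of obtaining the sharp threshold.
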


The following lemmas give some descriptions for Gevrey multipliers
\begin{lem}\label{lem5.1} (\cite{CDX}). Let $(r,R)$ satisfy $0<r<R$. For any tempered distribution $u$ fulfilling $\mathrm{supp}\widehat u\subset \lambda C$,
there exists a constant $c>0$ such that for all $\zeta\in \mathbb{R}$ and $a>0$,  the following inequality holds for all $p\in [1,\infty]$:
$$\|\Lambda^\zeta e^{-a\Lambda_{1}} u\|_{L^p}\leq C\lambda^\zeta e^{-\frac{1}{2}a\Lambda_{1}}\|u\|_{L^p},$$
where $C(0,r,R)\triangleq \{\xi\in \mathbb{R}^d|r\leq |\xi|\leq R\}$ is the annulus. Moreover, the above inequality also holds true if we replace $\Lambda_{1}$ by $\Lambda$.
\end{lem}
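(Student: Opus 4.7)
The plan is to treat this as a convolution/Fourier multiplier estimate combined with a scaling argument. First I would choose a smooth cut-off $\tilde\varphi$ supported in an annulus $\tilde{\mathcal C}$ strictly larger than $\mathcal C$ and equal to one on $\mathcal C$; since $\widehat u$ is supported in $\lambda\mathcal C$ we can write
\[
\Lambda^\zeta e^{-a\Lambda_1}u = K_{a,\lambda} * u,\qquad \widehat{K_{a,\lambda}}(\xi)=|\xi|^\zeta e^{-a|\xi|_1}\tilde\varphi(\xi/\lambda).
\]
By Young's convolution inequality, it is enough to prove $\|K_{a,\lambda}\|_{L^1}\le C\lambda^\zeta e^{-a\lambda/2}$, which then yields the asserted $L^p$-bound uniformly in $p\in[1,\infty]$.

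Next I would scale out $\lambda$ by the change of variables $\xi=\lambda\eta$, which gives $K_{a,\lambda}(x)=\lambda^{d+\zeta}\tilde K_{a\lambda}(\lambda x)$, so $\|K_{a,\lambda}\|_{L^1}=\lambda^\zeta\|\tilde K_{a\lambda}\|_{L^1}$, where
\[
\tilde K_\alpha(y)=\frac{1}{(2\pi)^d}\int e^{iy\cdot\eta}|\eta|^\zeta e^{-\alpha|\eta|_1}\tilde\varphi(\eta)\,d\eta.
\]
The problem is reduced to the uniform bound $\|\tilde K_\alpha\|_{L^1}\le Ce^{-\alpha/2}$ for $\alpha\ge 0$. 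Because $\tilde\varphi$ is supported away from the origin there is $c_0>0$ with $|\eta|_1\ge c_0$ on $\operatorname{supp}\tilde\varphi$; splitting $e^{-\alpha|\eta|_1}=e^{-\alpha|\eta|_1/2}\cdot e^{-\alpha|\eta|_1/2}$ extracts a scalar factor $e^{-c_0\alpha/2}$ and reduces matters to showing that
\[
\tilde K'_\alpha(y)=\frac{1}{(2\pi)^d}\int e^{iy\cdot\eta}|\eta|^\zeta e^{-\alpha|\eta|_1/2}\tilde\varphi(\eta)\,d\eta
\]
has $L^1$-norm bounded independently of $\alpha\ge 0$; the prefactor $e^{-c_0\alpha/2}$ will supply (after absorbing the constant $c_0$ into the notation) the claimed $e^{-a\lambda/2}$.

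The main obstacle is this uniform $L^1$-bound, because $|\eta|_1$ is not smooth across the coordinate hyperplanes $\{\eta_i=0\}$: one $\eta_i$-derivative of the symbol gives the admissible factor $\tfrac{\alpha}{2}\operatorname{sgn}(\eta_i)$, but two derivatives produce a Dirac mass $\alpha\,\delta(\eta_i)$. My plan is to handle this exactly as in the proof of Lemma~\ref{G1-G2}: multiply $\tilde K'_\alpha(y)$ by $\prod_i(1+y_i^2)$, convert to $\eta$-derivatives via integration by parts, bound the regular contributions by using that $(\alpha|\eta|_1)^ke^{-\alpha|\eta|_1/4}$ is uniformly $O(1)$, and dispose of the delta-mass contributions by reducing to lower-dimensional integrals of the same form over $\{\eta_i=0\}\cap\operatorname{supp}\tilde\varphi$, which enjoy the same exponential smallness. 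This yields $|\tilde K'_\alpha(y)|\le C(1+|y|)^{-d-1}$ uniformly in $\alpha$, hence a uniform $L^1$-bound, and the proof is complete. The final sentence of the lemma, namely the statement with $\Lambda$ instead of $\Lambda_1$, follows from the same scheme but is strictly easier: $|\eta|$ is smooth on the annulus $\tilde{\mathcal C}$, so no delta contributions appear and only the classical integration-by-parts argument is required.
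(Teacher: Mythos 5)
The overall plan---Young's inequality, the scaling $\|K_{a,\lambda}\|_{L^1}=\lambda^\zeta\|\tilde K_{a\lambda}\|_{L^1}$, and the integration-by-parts scheme of Lemma~\ref{G1-G2} to deal with the Dirac masses created by $\partial_{\eta_i}^2|\eta_i|$---is the right one, and it matches the template used in the paper's own Lemma~\ref{G1-G2} (the paper gives no proof of Lemma~\ref{lem5.1}, citing only \cite{CDX}). There is, however, a genuine error in the step where you ``extract a scalar factor $e^{-c_0\alpha/2}$.'' The asserted reduction relies implicitly on $\tilde K_\alpha(y)=e^{-c_0\alpha/2}\tilde K'_\alpha(y)$, but this identity is false: the pointwise bound $e^{-\alpha|\eta|_1/2}\le e^{-c_0\alpha/2}$ on $\operatorname{supp}\tilde\varphi$ is a bound on the symbol, not a factorization of the multiplier, and you cannot pull it through the oscillatory integral without taking absolute values and destroying the decay in $y$. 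If you carried out the rest of the argument literally you would only obtain $\|\tilde K_\alpha\|_{L^1}\le C$ uniformly in $\alpha$, without any exponential factor, and the lemma would not follow.

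The fix is immediate and keeps the structure intact: either factor out the honest constant $e^{-c_0\alpha/2}$ by replacing the auxiliary symbol with $|\eta|^\zeta e^{-\alpha(|\eta|_1-c_0/2)}\tilde\varphi(\eta)$ (whose exponent is still $\ge c_0\alpha/2$ on the support), or, more simply, do not pre-extract anything: run the $(1+\partial_{\eta_i}^2)$ integration by parts directly on $\tilde K_\alpha$, note that the regular contribution after two derivatives is of size $\alpha^2 e^{-\alpha|\eta|_1}$, and on $\{|\eta|_1\ge c_0\}$ bound it by $C e^{-c_0\alpha/2}$ using the uniform boundedness of $(\alpha|\eta|_1)^2 e^{-\alpha|\eta|_1/2}$; integrating over the compact support then gives $\prod_i(1+y_i^2)\,|\tilde K_\alpha(y)|\le C e^{-c_0\alpha/2}$, which is exactly what the lemma needs. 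Two smaller remarks: the delta-mass contributions do reduce to lower-dimensional slices $\{\eta_i=0\}\cap\operatorname{supp}\tilde\varphi$ on which $|\eta|_1\ge c_0>0$ persists, so the induction terminates as you say; but the pointwise bound you actually obtain is the anisotropic $|\tilde K_\alpha(y)|\le C\prod_i(1+y_i^2)^{-1}$, which is integrable on $\mathbb{R}^d$ as a product of one-dimensional Cauchy-type factors but does \emph{not} imply the stronger isotropic bound $(1+|y|)^{-d-1}$ you write; the anisotropic bound is what is needed and what the argument produces.
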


\subsection{Gevrey estimates with $\Lambda_{1}$ symbol}
In this subsection, we mainly introduce some Gevrey product estimates and composite estimates which are frequently applied in our analysis.
Proving the Gevrey regularity of solutions in real analytic family will be based on continuity results for the family
$(\mathcal{B}_{t})_{t\geq 0}$ of bilinear operators, which are defined by
\begin{eqnarray*}
\mathcal{B}_{t}(f,g)(t,x)&=&e^{c_{0}\sqrt {t}\Lambda_1}(e^{-c_{0}\sqrt {t}\Lambda_1}fe^{-c_{0}\sqrt {t}\Lambda_1}g)(x)
\nonumber\\&=& \frac{1}{(2\pi)^{2d}}\int_{\mathbb{R}^d}\int_{\mathbb{R}^d}e^{ix\cdot(\xi+\eta)}e^{c_{0}\sqrt {t}(|\xi+\eta|_{1}-|\xi|_{1}-|\eta|_{1})}\widehat f(\xi)\widehat g(\eta)d\xi d\eta
\end{eqnarray*}
for some $c_0>0$. From \cite{BBT}, one can introduce the following operators acting on functions depending
on one real variable:
$$K_{1}f=\frac{1}{2\pi}\int_{0}^{\infty}e^{ix\xi}\widehat f(\xi)d\xi,$$
$$K_{-1}f=\frac{1}{2\pi}\int_{-\infty}^{0}e^{ix\xi}\widehat f(\xi)d\xi,$$
and define $L_{a,1}$ and $L_{a,-1}$ as follows:
$$L_{a,1}f=f\quad \mbox{and}\quad L_{a,-1}f=\frac{1}{2\pi}\int_{\mathbb{R}^d}e^{ix\xi}e^{-2a|\xi|}\widehat f(\xi)d\xi.$$
Set
$$Z_{t,\alpha,\beta}=K_{\beta_{1}}L_{c_{0}\sqrt {t},\alpha_{1}\beta_{1}}\otimes...\otimes K_{\beta_{d}}L_{c_{0}\sqrt {t},\alpha_{d}\beta_{d}}
\quad \mbox{and}\quad K_{\alpha}=K_{\alpha_{1}}\otimes ...\otimes K_{\alpha_{d}}$$
for $t\geq 0$, $\alpha=(\alpha_{1}, ... ,\alpha_{d})$ and $\beta=(\beta_{1}, ...,\beta_{d})\in \lbrace-1,1\rbrace^d$.
Then it follows that
$$\mathcal{B}_{t}(f,g)=\sum_{(\alpha,\beta,\gamma)\in({\{-1,1\}^{d})}^3}K_{\alpha}(Z_{t,\alpha,\beta}fZ_{t,\alpha,\gamma}g).$$
It is not difficult to see that $K_{\alpha},Z_{t,\alpha, \beta}$ are linear combinations of smooth homogeneous of
degree zero Fourier multipliers, which are bounded on $L^p$ for $1<p<\infty$. Consequently,
\begin{lem}\label{lem5.3}
For any $1<p,p_{1},p_{2}<\infty$ with $\frac{1}{p}=\frac{1}{p_{1}}+\frac{1}{p_{2}}$, we have for some constant $C$
independent of $t\geq0$,
\begin{eqnarray}\label{momok}\|\mathcal{B}_{t}(f,g)\|_{L^p}\leq C\|f\|_{L^{p_{1}}}\|g\|_{L^{p_{2}}}.\end{eqnarray}
\end{lem}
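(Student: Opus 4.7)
\noindent\textbf{Proof proposal for Lemma \ref{lem5.3}.}
The decomposition
\[
\mathcal{B}_{t}(f,g)=\sum_{(\alpha,\beta,\gamma)\in(\{-1,1\}^{d})^{3}}K_{\alpha}\bigl(Z_{t,\alpha,\beta}f\cdot Z_{t,\alpha,\gamma}g\bigr)
\]
already displayed just above the statement has reduced the problem to a finite sum (of $2^{3d}$ terms), so it suffices to bound a single summand with a constant independent of $t\ge 0$. My plan is therefore to verify that every factor in this representation is a Fourier multiplier whose $L^{p}$ operator norm is controlled uniformly in $t$, and then to close the argument by H\"older's inequality.

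First I would examine $K_{\alpha}$. Each one-dimensional component $K_{\pm 1}$ is the spectral projection onto $\{\xi>0\}$ or $\{\xi<0\}$, i.e.\ $\tfrac12(I\pm iH)$ where $H$ denotes the Hilbert transform in the corresponding variable. Since $H$ is bounded on $L^{p}(\mathbb{R})$ for every $1<p<\infty$, each $K_{\pm 1}$ is, and a tensor product of such one-dimensional multipliers is bounded on $L^{p}(\mathbb{R}^{d})$ with norm $\lesssim 1$, independent of $t$ (it does not involve $t$ at all).

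Next I turn to $Z_{t,\alpha,\beta}$. Each tensor factor is either $K_{\beta_{i}}\,L_{c_{0}\sqrt t,1}=K_{\beta_{i}}$, which is handled as above, or $K_{\beta_{i}}\,L_{c_{0}\sqrt t,-1}$, whose Fourier symbol in the variable $\xi_{i}$ equals $\mathbf 1_{\{\beta_{i}\xi_{i}>0\}}(\xi_{i})\,e^{-2c_{0}\sqrt t|\xi_{i}|}$. The key point is that $L_{a,-1}$ is convolution (in one variable) with the Poisson kernel $\tfrac{1}{\pi}\tfrac{2a}{x^{2}+(2a)^{2}}$, which is a positive function with $L^{1}$-norm equal to $1$; hence $\|L_{a,-1}\|_{L^{p}\to L^{p}}\le 1$ for every $a\ge 0$ and every $1\le p\le\infty$. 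Combining this with the $L^{p}$-boundedness of $K_{\beta_{i}}$ and taking a tensor product shows that $Z_{t,\alpha,\beta}\colon L^{p_{1}}\to L^{p_{1}}$ and $Z_{t,\alpha,\gamma}\colon L^{p_{2}}\to L^{p_{2}}$ with bounds depending only on $d$ and the exponents.

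Finally I would apply H\"older's inequality with $\frac{1}{p}=\frac{1}{p_{1}}+\frac{1}{p_{2}}$ to the pointwise product, followed by the $L^{p}$-boundedness of $K_{\alpha}$, obtaining
\[
\bigl\|K_{\alpha}(Z_{t,\alpha,\beta}f\cdot Z_{t,\alpha,\gamma}g)\bigr\|_{L^{p}}
\lesssim \|Z_{t,\alpha,\beta}f\|_{L^{p_{1}}}\|Z_{t,\alpha,\gamma}g\|_{L^{p_{2}}}
\lesssim \|f\|_{L^{p_{1}}}\|g\|_{L^{p_{2}}},
\]
and then summing over the finitely many indices $(\alpha,\beta,\gamma)$ delivers \eqref{momok}. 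The main (and essentially the only) subtlety is ensuring that the $t$-dependence is harmless: this rests on the observation that the exponential damping $e^{-2c_{0}\sqrt{t}|\xi_{i}|}$ is a Poisson-type multiplier with unit $L^{1}$-kernel for every value of $t$, so no loss in $t$ accumulates when the one-dimensional factors are assembled. The requirement $1<p,p_{1},p_{2}<\infty$ enters exactly and only through the use of the Hilbert transform in the $K_{\pm 1}$ factors; this is why the endpoint case $p=1$ must be handled separately via the extended Coifman--Meyer theory of Corollary~\ref{Euclidean}.
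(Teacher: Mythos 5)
Your proposal is correct and follows essentially the same route as the paper (inherited from \cite{BBT}): decompose $\mathcal{B}_{t}$ into the $K_{\alpha}(Z_{t,\alpha,\beta}f\cdot Z_{t,\alpha,\gamma}g)$ pieces, note that $K_{\alpha}$ and $Z_{t,\alpha,\beta}$ are tensor products of the identity, Riesz/Hilbert-type projections and Poisson-kernel convolutions --- all $L^{p}$-bounded uniformly in $t$ for $1<p<\infty$ --- and conclude by H\"older. You have merely spelled out the uniformity in $t$ and the precise form of the one-dimensional factors, which the paper leaves implicit.
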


We first state the following product estimate based on Lemma \ref{lem5.3}:
\begin{prop}\cite{CDX}\label{propA1}
Let $1<p<\infty$, $1\leq q\leq\infty$. Denote $(A,B)=e^{c_{0}\sqrt{t}\Lambda_{1}}(a,b)$. Then if following $s_{1}<\frac{d}{p}$, inequality holds true for paraproducts
$$\|e^{c_{0}\sqrt{t}\Lambda_{1}}(T_{a}b)\|_{\dot{B}^{s_{1}+s_{2}-\frac{d}{p}}_{p,q}}\lesssim\|A\|_{\dot{B}^{s_{1}}_{p,\infty}}
\|B\|_{\dot{B}^{s_{2}}_{p,q}}.$$
Moreover, if $s_{1}+s_{2}>d\max\{0,\frac{2}{p}-1\}.$, we have following inequality for remainders holds true:
$$\|e^{c_{0}\sqrt{t}\Lambda_{1}}R(a,b)\|_{\dot{B}^{s_{1}+s_{2}-\frac{d}{p}}_{p,q}}\lesssim
\|A\|_{\dot{B}^{s_{1}}_{p,\infty}}\|B\|_{\dot{B}^{s_{2}}_{p,q}}.$$
\end{prop}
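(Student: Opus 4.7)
The proof rests on the identity
$$e^{c_{0}\sqrt{t}\Lambda_1}(fg) = \mathcal{B}_t\bigl(e^{c_{0}\sqrt{t}\Lambda_1}f,\,e^{c_{0}\sqrt{t}\Lambda_1}g\bigr),$$
which is immediate from the definition of $\mathcal{B}_t$ given earlier, combined with the fact that $e^{c_{0}\sqrt{t}\Lambda_1}$ is a Fourier multiplier and therefore commutes with all Littlewood--Paley projectors. Applied termwise to the Bony decomposition, this yields
$$e^{c_{0}\sqrt{t}\Lambda_1}(T_a b) = \sum_{j'}\mathcal{B}_t\bigl(\dot{S}_{j'-2}A,\,\dot{\Delta}_{j'}B\bigr),\qquad e^{c_{0}\sqrt{t}\Lambda_1}R(a,b) = \sum_{j'}\mathcal{B}_t\bigl(\tilde{\dot{\Delta}}_{j'}A,\,\dot{\Delta}_{j'}B\bigr),$$
with paraproduct summands spectrally localized in the shell $|\xi|\sim 2^{j'}$ and remainder summands in the ball $|\xi|\lesssim 2^{j'}$, since $\mathrm{supp}\,\widehat{\mathcal{B}_t(f,g)}\subset\mathrm{supp}\,\hat{f}+\mathrm{supp}\,\hat g$. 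The key analytic input is Lemma~\ref{lem5.3}, which delivers the bound $\|\mathcal{B}_t(f,g)\|_{L^p}\lesssim\|f\|_{L^{p_1}}\|g\|_{L^{p_2}}$ uniformly in $t\geq 0$ for every Hölder triple with $1<p_1,p_2,p<\infty$.

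For the paraproduct I would pick exponents $1<p_1,p_2<\infty$ with $\tfrac{1}{p_1}+\tfrac{1}{p_2}=\tfrac{1}{p}$ and $p_1$ large enough that $\tfrac{d}{p}-\tfrac{d}{p_1}-s_1>0$, which is possible precisely under the hypothesis $s_1<\tfrac{d}{p}$. Combining Lemma~\ref{lem5.3} with Bernstein's inequality applied to $\dot{S}_{j'-2}A=\sum_{j''\leq j'-2}\dot{\Delta}_{j''}A$ produces a convergent geometric series bounded by $C\,2^{j'(d/p-d/p_1-s_1)}\|A\|_{\dot{B}^{s_1}_{p,\infty}}$; a second Bernstein lifts $\dot{\Delta}_{j'}B$ from $L^p$ to $L^{p_2}$ at the cost of $2^{dj'/p_1}$, so the total power of $2^{j'}$ comes out to $d/p-s_1$. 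This cancels against the prefactor $2^{j(s_1+s_2-d/p)}$ up to the finite spectral shift $|j-j'|\leq N_0$, after which an $\ell^q$-summation in $j$ delivers the stated paraproduct bound.

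The remainder case follows the same blueprint, the only structural difference being that the ball-shaped supports force $j'\geq j-N_0$, so the geometric series now lives at high frequencies. One invokes Lemma~\ref{lem5.3} (or, equivalently, the $L^p$-boundedness of the Fourier multipliers $K_\alpha,Z_{t,\alpha,\beta}$ making up $\mathcal{B}_t$) together with Bernstein in the output exponent, and closes via Young's inequality for sequences; the hypothesis $s_1+s_2>d\max\{0,\tfrac{2}{p}-1\}$ is exactly the threshold guaranteeing the required $\ell^q$-summability. The principal obstacle throughout is the strict constraint $1<p_1,p_2<\infty$ in Lemma~\ref{lem5.3}: the $L^\infty$ endpoint familiar from the classical arguments of Lemma~\ref{product} is unavailable here, which simultaneously forces the restriction $1<p<\infty$ in this Proposition and is precisely the reason the paper develops the extended Coifman--Meyer machinery (Corollary~\ref{Euclidean}) separately to handle the endpoint $p=1$.
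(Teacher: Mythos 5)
Proposition~\ref{propA1} is imported from \cite{CDX} and carries no proof in this paper, so there is no in-text argument to compare with; however, the Appendix provides exactly the machinery ($\mathcal{B}_t$, its tensor decomposition via $K_\alpha,Z_{t,\alpha,\beta}$, Lemma~\ref{lem5.3}) on which the argument you sketch is built, and your proof is correct. Your key observations all hold: $e^{c_0\sqrt{t}\Lambda_1}(fg)=\mathcal{B}_t\bigl(e^{c_0\sqrt{t}\Lambda_1}f,e^{c_0\sqrt{t}\Lambda_1}g\bigr)$; the Gevrey multiplier commutes with $\dot\Delta_j,\dot S_j$, so the spectral localization of the Bony summands is unchanged; and the one genuine obstruction, the exclusion of the $L^\infty$ endpoint in Lemma~\ref{lem5.3}, is circumvented by taking $p_1$ large but finite and repaying the resulting $2^{j'd/p_1}$ deficit via Bernstein on the $B$-factor. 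Your paraproduct bookkeeping (total power $2^{j'(d/p-s_1)}$ before the $\ell^q$ sum) checks out. The remainder case is sketched rather lightly, and it is worth making explicit the device you only gesture at: since Lemma~\ref{lem5.3} also excludes the target $L^1$, one cannot run the classical $L^p\times L^{p'}\to L^1$ step; instead bound $\mathcal{B}_t(\tilde{\dot\Delta}_{j'}A,\dot\Delta_{j'}B)$ in an intermediate $L^r$ with $\max\{1,p/2\}<r<p$ via $1/r=1/p+1/p_2$, use Bernstein to bring $\dot\Delta_{j'}B$ back to $L^p$ at cost $2^{j'd(2/p-1/r)}$, and apply Bernstein $L^r\to L^p$ for the output at cost $2^{jd(1/r-1/p)}$. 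The resulting geometric factor is $2^{(j-j')(s_1+s_2-d(2/p-1/r))}$, and letting $r$ approach its lower admissible bound shows that $s_1+s_2>d\max\{0,2/p-1\}$ is exactly the threshold, with strictness needed to fix a workable finite $r$. With that detail added, your proof is complete and follows the intended route.
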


\begin{rem}
Above estimates could be easily extended to Chemin-Lerner space. In fact, we could deduce from (\ref{momok}) such that
\begin{eqnarray}\|\mathcal{B}_{t}(f,g)\|_{L^\rho_{T} L^p}\leq C\|f\|_{L^{\rho_{1}}_{T}L^{p_{1}}}\|g\|_{L^{\rho_{2}}_{T}L^{p_{2}}}\end{eqnarray}
where $\frac{1}{\rho}=\frac{1}{\rho_{2}}+\frac{1}{\rho_{2}}$. Therefore Proposition \ref{propA1} in Chemin-Lerner space is obtained.
\end{rem}

\end{document}